\documentclass[reqno,10pt,centertags,draft]{amsart}
\usepackage{amsmath,amsthm,amscd,amssymb,latexsym,upref,enumerate}

\setlength{\textwidth}{6in}%
\setlength{\oddsidemargin}{.2in}%
\setlength{\evensidemargin}{.2in}%
\newtheorem{theorem}{Theorem}[section]
\newtheorem{lemma}[theorem]{Lemma}

\newtheorem{hypothesis}[theorem]{Hypothesis}
\theoremstyle{remark}
\newtheorem{remark}[theorem]{Remark}

\theoremstyle{definition}
\newtheorem{definition}[theorem]{Definition}
\numberwithin{equation}{section}

\newcommand{\bbA}{{\mathbb{A}}}

\newcommand{\bbC}{{\mathbb{C}}}

\newcommand{\bbN}{{\mathbb{N}}}
\newcommand{\bbR}{{\mathbb{R}}}

\newcommand{\bbZ}{{\mathbb{Z}}}


\newcommand{\dott}{\,\cdot\,}
\newcommand{\no}{\notag}
\newcommand{\lb}{\label}
\newcommand{\f}{\frac}

\newcommand{\ol}{\overline}

\newcommand{\wti}{\widetilde}

\newcommand{\bs}{\backslash}

\newcommand{\rank}{\text{\rm{rank}}}

\newcommand{\Arc}{\text{\rm{Arc}}}

\newcommand{\bi}{\bibitem}
\newcommand{\hatt}{\widehat}

\newcommand{\st}{\;|\;}

\newcommand{\deven}{\delta_{\rm even}}
\newcommand{\dodd}{\delta_{\rm odd}}

\renewcommand{\Re}{\text{\rm Re}}
\renewcommand{\Im}{\text{\rm Im}}

\renewcommand{\ge}{\geqslant}



\DeclareMathOperator{\diag}{diag}

\newcommand{\abs}[1]{\left\lvert#1\right\rvert}
\newcommand{\norm}[1]{\left\Vert#1\right\Vert}

\newcommand{\Om}{\Omega}

\newcommand{\si}{\sigma}

\newcommand{\al}{\alpha}
\newcommand{\be}{\beta}
\newcommand{\ga}{\gamma}
\newcommand{\De}{\Delta}
\newcommand{\de}{\delta}
\newcommand{\te}{\theta}
\newcommand{\Te}{\Theta}
\newcommand{\ta}{\tau}
\newcommand{\ze}{\zeta}
\newcommand{\ka}{\kappa}
\newcommand{\ea}{\eta}

\newcommand{\C}{\mathbb{C}}
\newcommand{\Cz}{{\C\backslash\{0\}}}
\newcommand{\CdD}{{\C\backslash\dD}}

\newcommand{\D}{\mathbb{D}}
\newcommand{\dD}{{\partial\hspace*{.2mm}\mathbb{D}}}
\newcommand{\Z}{{\mathbb{Z}}}
\newcommand{\N}{{\mathbb{N}}}

\newcommand{\M}{{\mathcal{M}}}
\newcommand{\A}{\mathbb{A}}
\newcommand{\U}{{\mathbb{U}}}
\newcommand{\V}{{\mathbb{V}}}
\newcommand{\W}{{\mathbb{W}}}
\newcommand{\T}{{\mathbb{T}}}

\newcommand{\Cm}{{\mathbb{C}^{m\times m}}}

\newcommand{\s}[1]{{\mathrm{s}(#1)}}
\newcommand{\sm}[1]{{\mathrm{s}(#1)^m}}
\newcommand{\smn}[1]{{\mathrm{s}(#1)^{m\times n}}}
\newcommand{\smm}[1]{{\mathrm{s}(#1)^{m\times m}}}
\newcommand{\lt}[1]{{\ell^2(#1)}}
\newcommand{\ltm}[1]{{\ell^2(#1)^m}}

\newcommand{\ltmn}[1]{{\ell^2(#1)^{m\times n}}}
\newcommand{\ltmm}[1]{{\ell^2(#1)^{m\times m}}}


\begin{document}
\title[Minimal Rank Decoupling of CMV Operators]{Minimal Rank Decoupling of Full-Lattice CMV Operators with
Scalar- and Matrix-Valued\\ Verblunsky Coefficients}

\author[S.\ Clark, F.\ Gesztesy, and M.\ Zinchenko]
{Stephen Clark, Fritz Gesztesy, and Maxim Zinchenko}

\address{Department of Mathematics \& Statistics,
University of Missouri, Rolla, MO 65409, USA}
\email{sclark@umr.edu}
\urladdr{http://web.umr.edu/\~{}sclark/index.html}

\address{Department of Mathematics,
University of Missouri, Columbia, MO 65211, USA}
\email{fritz@math.missouri.edu}
\urladdr{http://www.math.missouri.edu/personnel/faculty/gesztesyf.html}

\address{Department of Mathematics,
California Institute of Technology, Pasadena, CA 91125, USA}
\email{maxim@caltech.edu}
\urladdr{http://www.math.caltech.edu/\~{}maxim}

%

\subjclass{Primary 34E05, 34B20, 34L40, 34A55.}
\keywords{CMV operators, orthogonal polynomials, finite difference operators, Weyl--Titchmarsh theory,
finite rank perturbations.}
\thanks{Appeared in {\it Difference Equations and Applications}, Proceeding of 
the 14th International Conference on Difference Equations and Applications, Istanbul, July 21--25, 2008, M.\ Bohner, Z.\ Do{\v s}l\'a, G.\ Ladas, M.\ \"Unal, and 
A.\ Zafer (eds.), U{\u g}ur--Bah{\c c}e{\c s}ehir University Publishing Company, 
Istanbul, Turkey, 2009, pp.\ 19--59.}


\maketitle

\begin{abstract}
Relations between half- and full-lattice CMV operators with scalar- and
matrix-valued Verblunsky coefficients are investigated. In particular, the
decoupling of full-lattice CMV operators into a direct sum of two half-lattice CMV
operators by a perturbation of minimal rank is studied. Contrary to the Jacobi case, decoupling a full-lattice CMV matrix by changing one of the Verblunsky coefficients
results in a perturbation of twice the minimal rank. The explicit
form for the minimal rank perturbation and the resulting two half-lattice CMV
matrices are obtained. In addition, formulas relating the Weyl--Titchmarsh
$m$-functions (resp., matrices) associated with the involved CMV operators and their Green's functions (resp., matrices) are derived.
%
%
%
\end{abstract}

\section{Introduction}\lb{s1}

CMV operators are a special class of unitary semi-infinite or doubly-infinite five-diagonal matrices which received enormous attention in recent years. We refer to \eqref{2.8} and \eqref{3.18} for the explicit form of doubly infinite CMV operators on $\bbZ$ in the case of scalar, respectively, matrix-valued Verblunsky coefficients. For the corresponding half-lattice CMV operators we refer to \eqref{2.14} and \eqref{3.26}.

The actual history of CMV operators (with scalar Verblunsky coefficients) is somewhat  intriguing: The corresponding unitary semi-infinite five-diagonal matrices were first introduced in 1991 by Bunse--Gerstner and Elsner \cite{BGE91}, and subsequently discussed in detail by Watkins \cite{Wa93} in 1993 (cf.\ the discussion in Simon
\cite{Si06}). They were subsequently rediscovered by Cantero, Moral, and Vel\'azquez (CMV) in \cite{CMV03}. In \cite[Sects.\ 4.5, 10.5]{Si04}, Simon introduced the corresponding notion of unitary doubly infinite five-diagonal matrices and coined the term ``extended'' CMV matrices. For simplicity, we will just speak of CMV operators, irrespective of whether or not they are half-lattice or full-lattice operators. We also note that in a context different from orthogonal polynomials on the unit circle, Bourget, Howland, and Joye \cite{BHJ03} introduced a family of doubly infinite matrices with three sets of parameters which, for special choices of the parameters, reduces to two-sided CMV matrices on $\bbZ$. Moreover, it is possible to connect unitary block Jacobi matrices to the trigonometric moment problem (and hence to CMV matrices) as discussed by Berezansky and Dudkin \cite{BD05}, \cite{BD06}.

The relevance of this unitary operator $\U$ on $\ell^2(\bbZ)^m$, more precisely, the relevance of the corresponding half-lattice CMV operator $\U_{+,0}$ in
$\ell^2(\bbN_0)^m$ is derived from its intimate relationship with the trigonometric moment problem and hence with finite measures on the unit circle $\dD$. (Here
$\bbN_0=\bbN\cup\{0\}$.) Following \cite{CGZ07}, \cite{CGZ08}, \cite{GZ06},
\cite{GZ06a}, and \cite{Zi08}, this will be reviewed in some detail, and also extended in certain respects, in Sections \ref{s2} and \ref{s3}, as this material is of fundamental importance to the principal topics (such as decoupling of full-lattice CMV operators into direct sums of left and right half-lattice CMV operators and a similar result for associated Green's functions) discussed in this paper, but we also refer to the monumental two-volume treatise by Simon \cite{Si04} (see also \cite{Si04b} and \cite{Si05}) and the exhaustive bibliography therein. For classical results on orthogonal polynomials on the unit circle we refer, for instance, to \cite{Ak65}, \cite{Ge46}--\cite{Ge61}, \cite{Kr45},
\cite{Sz20}--\cite{Sz78}, \cite{Ve35}, \cite{Ve36}. More recent references relevant to the spectral theoretic content of this paper are \cite{De07}, \cite{GJ96}--\cite{GT94},
\cite{GZ06}, \cite{GZ06a}, \cite{GN01}, \cite{PY04}, \cite{Si04a}, and \cite{Zi08}. The
full-lattice CMV operators $\U$ on $\bbZ$ are closely related to an important, and only recently intensively studied, completely integrable nonabelian version of the defocusing nonlinear Schr\"odinger equation (continuous in time but discrete in space), a special case of the Ablowitz--Ladik system. Relevant references in this context are, for instance,
\cite{AL75}--\cite{APT04}, \cite{GGH05}, \cite{GH05}--\cite{GHMT07b}, \cite{Li05},
\cite{MEKL95}--\cite{Ne06}, \cite{Sc89}, \cite{Ve99}, and the literature cited therein. We emphasize that the case of matrix-valued coefficients $\alpha_k$ is considerably less studied than the case of scalar coefficients.

We should also emphasize that while there is an extensive literature on orthogonal matrix-valued polynomials on the real line and on the unit circle, we refer, for instance, to
\cite{AN84}, \cite{BC92}, \cite[Ch.\ VII]{Be68}, \cite{BG90}, \cite{CFMV03}, \cite{CG06},
\cite{DG92}--\cite{DV95}, \cite{Ge81}, \cite{Ge82}, \cite{Kr49}, \cite{Kr71}, \cite{Le47},
\cite{Lo99}, \cite{Os97}--\cite{Os02}, \cite{Ro90}, \cite{Si06a}, \cite{YM01}--\cite{YK78}, and the large body of literature therein, the case of CMV operators with matrix-valued Verblunsky coefficients appears to be a much less explored frontier. The only references we are aware of in this context are Simon's treatise \cite[Part 1, Sect.\ 2.13]{Si04} and the recent papers \cite{Ar08}, \cite{CGZ07}, \cite{DPS08}, and \cite{Si06}.

Finally, a brief description of the content of each section in this paper: In Section \ref{s2} we review, and in part, extend the basic Weyl--Titchmarsh theory for half-lattice CMV operators with scalar Verblunsky coefficients originally derived in \cite{GZ06}, and recall its intimate connections with transfer matrices and orthogonal Laurent polynomials. The principal result of this section, Theorem \ref{t2.3}, then provides a necessary and sufficient condition for the difference between the full-lattice CMV operator $U$ and its ``decoupling'' into a direct sum of appropriate left and right half-lattice CMV operators to be of rank one. The same result is also derived for the resolvent differences of $U$ and the resolvent of its decoupling into a direct sum of left and right half-lattice CMV operators. Theorem \ref{t2.3} is in sharp contrast to the familiar Jacobi case, since decoupling a full-lattice CMV matrix by changing one of the Verblunsky coefficients
results in a perturbation of rank two. While this difference compared to Jacobi operators was noticed first by Simon \cite[Sect.\ 4.5]{Si04}, we explore it further here and provide a complete discussion of this decoupling phenomenon, including its extension to the matrix-valued case, which represents a new result. We conclude this section with a discussion of half-lattice Green's functions in Lemma \ref{l2.6}, extending a result in
\cite{GZ06}.

In Section \ref{s3} we develop all these results for CMV operators with
$m\times m$, $m\in\bbN$, matrix-valued Verblunsky coefficients. In particular, in
Theorem \ref{t3.6}, the principal result of this section, we provide
a necessary and sufficient condition for the difference between the
full-lattice CMV operator $U$ and its decoupling into a direct sum of
appropriate left and right half-lattice CMV operators to be of minimal
rank $m$.

Finally, Appendix \ref{sA} summarizes basic facts on matrix-valued
Caratheodory and Schur functions relevant to this paper.

\section{CMV operators with scalar coefficients} \lb{s2}

This section is devoted to a study of CMV operators associated with
scalar Verblunsky coefficients. We derive a criterion under which
a difference of a full-lattice CMV operator and a direct sum of two
half-lattice CMV operators is of rank one. The same condition will
also imply a similar result for the resolvents of these operators. At
the end of the section we establish relations that hold between
Weyl--Titchmarsh $m$-functions associated with the above operators
and derive explicit expressions for half-lattice Green's matrices.

We start by introducing basic notations used throughout this paper.
Let $\s{\Z}$ be the space of complex-valued sequences and
$\lt{\Z}\subset \s{\Z}$ be the usual Hilbert space of all square
summable complex-valued sequences with scalar product
$(\cdot,\cdot)_{\lt{\Z}}$ linear in the second argument. The {\it
standard basis} in $\lt{\Z}$ is denoted by
\begin{equation}
\{\delta_k\}_{k\in\Z}, \quad
\delta_k=(\dots,0,\dots,0,\underbrace{1}_{k},0,\dots,0,\dots)^\top,
\; k\in\Z.
\end{equation}

For $m\in\N$ and $J\subseteq\bbR$ an interval, we will identify
$\oplus_{j=1}^m\lt{J\cap\Z}$ and $\lt{J\cap\Z}\otimes\C^m$ and then
use the simplified notation $\ltm{J\cap\Z}$. For simplicity, the
identity operators on $\lt{J\cap\Z}$ and $\ltm{J\cap\Z}$ are
abbreviated by $I$ without separately indicating its dependence on
$m$ and $J$. The identity $m\times m$ matrix is denoted by $I_m$.

Throughout this section we make the following basic assumption:

\begin{hypothesis} \lb{h2.1}
Let $\alpha=\{\al_k\}_{k\in\Z}\in \s{\Z}$ be a sequence of complex
numbers such that
\begin{equation} \lb{2.2}
\al_k\in\D, \quad k \in \Z.
\end{equation}
\end{hypothesis}

Given a sequence $\alpha$ satisfying \eqref{2.2}, we define the
following sequence of positive real numbers $\{\rho_k\}_{k\in\Z}$ by
\begin{equation}
\rho_k = \big[1-\abs{\al_k}^2\big]^{1/2}, \quad k \in \Z. \lb{2.3}
\end{equation}

Following Simon \cite{Si04}, we call $\{\al_k\}_{k\in\Z}$ the
Verblunsky coefficients in honor of Verblunsky's pioneering work in
the theory of orthogonal polynomials on the unit circle \cite{Ve35},
\cite{Ve36}.

Next, we also introduce a sequence of $2\times 2$ unitary matrices
$\Te_k$ by
\begin{equation} \lb{2.4}
\Te_k = \begin{pmatrix} -\al_k & \rho_k \\ \rho_k & \ol{\al_k}
\end{pmatrix},
\quad k \in \Z,
\end{equation}
and two unitary operators $V$ and $W$ on $\lt{\Z}$ by their matrix
representations in the standard basis of $\lt{\Z}$ as follows,
\begin{align} \lb{2.5}
V &= \begin{pmatrix} \ddots & & &
\raisebox{-3mm}[0mm][0mm]{\hspace*{-5mm}\Huge $0$}  \\ & \Te_{2k-2} &
& \\ & & \Te_{2k} & & \\ &
\raisebox{0mm}[0mm][0mm]{\hspace*{-10mm}\Huge $0$} & & \ddots
\end{pmatrix}, \quad
W = \begin{pmatrix} \ddots & & &
\raisebox{-3mm}[0mm][0mm]{\hspace*{-5mm}\Huge $0$}
\\ & \Te_{2k-1} &  &  \\ &  & \Te_{2k+1} &  & \\ &
\raisebox{0mm}[0mm][0mm]{\hspace*{-10mm}\Huge $0$} & & \ddots
\end{pmatrix},
\end{align}
where
\begin{align}
\begin{pmatrix}
V_{2k-1,2k-1} & V_{2k-1,2k} \\
V_{2k,2k-1}   & V_{2k,2k}
\end{pmatrix} =  \Te_{2k},
\quad
\begin{pmatrix}
W_{2k,2k} & W_{2k,2k+1} \\ W_{2k+1,2k}  & W_{2k+1,2k+1}
\end{pmatrix} =  \Te_{2k+1},
\quad k\in\Z.
\end{align}
Moreover, we introduce the unitary operator $U$ on $\lt{\Z}$ by
\begin{equation} \lb{2.7}
U = VW,
\end{equation}
or in matrix form, in the standard basis of $\lt{\Z}$, by
\begin{align}
U &= \begin{pmatrix} \ddots &&\hspace*{-8mm}\ddots
&\hspace*{-10mm}\ddots &\hspace*{-12mm}\ddots &\hspace*{-14mm}\ddots
&&& \raisebox{-3mm}[0mm][0mm]{\hspace*{-6mm}{\Huge $0$}}
\\
&0& -\al_{0}\rho_{-1} & -\ol{\al_{-1}}\al_{0} & -\al_{1}\rho_{0} &
\rho_{0}\rho_{1}
\\
&& \rho_{-1}\rho_{0} &\ol{\al_{-1}}\rho_{0} & -\ol{\al_{0}}\al_{1} &
\ol{\al_{0}}\rho_{1} & 0
\\
&&&0& -\al_{2}\rho_{1} & -\ol{\al_{1}}\al_{2} & -\al_{3}\rho_{2} &
\rho_{2}\rho_{3}
\\
&&\raisebox{-4mm}[0mm][0mm]{\hspace*{-6mm}{\Huge $0$}} &&
\rho_{1}\rho_{2} & \ol{\al_{1}}\rho_{2} & -\ol{\al_{2}}\al_{3} &
\ol{\al_{2}}\rho_{3}&0
\\
&&&&&\hspace*{-14mm}\ddots &\hspace*{-14mm}\ddots
&\hspace*{-14mm}\ddots &\hspace*{-8mm}\ddots &\ddots
\end{pmatrix}     \lb{2.8}
\\
&= \rho^- \rho \, \deven \, S^{--} + ({\ol \alpha}^-\rho \, \deven -
\alpha^+\rho \, \dodd) S^-  - {\ol \alpha}\alpha^+ \no
\\
& \quad + ({\ol \alpha} \rho^+ \, \deven - \alpha^{++} \rho^+ \,
\dodd) S^+ + \rho^+ \rho^{++} \, \dodd \, S^{++},\lb{2.9}
\end{align}
where $\deven$ and $\dodd$ denote the characteristic functions of the
even and odd integers,
\begin{equation}\lb{2.10}
\deven = \chi_{_{2\Z}}, \quad \dodd = 1 - \deven = \chi_{_{2\Z +1}}
\end{equation}
and $S^\pm$, $S^{++}$, $S^{--}$ denote the shift operators acting
upon $\s{\Z}$, that is, $S^{\pm}f(\cdot)=f^{\pm}(\cdot)=f
(\cdot\pm1)$ for $f\in \s{\Z}$, $S^{++}=S^+S^+$, and $S^{--}=S^-S^-$.
Here the diagonal entries in the infinite matrix \eqref{2.8} are
given by $U_{k,k}=-\ol{\alpha_k}\alpha_{k+1}$, $k\in\Z$.

As explained in the introduction, in the recent literature on
orthogonal polynomials on the unit circle, such operators $U$ are
frequently called CMV operators.

Next we recall some of the principal results of \cite{GZ06} needed in
this paper.

\begin{lemma} \lb{l2.2}
Let $z\in\C\backslash\{0\}$ and suppose $\{u(z,k)\}_{k\in\Z}$,
$\{v(z,k)\}_{k\in\Z}\in\s{\Z}$. Then the following items
$(i)$--$(iii)$ are equivalent:
\begin{align}
(i) &\quad (U u(z,\cdot))(k) = z u(z,k), \quad (W u(z,\cdot))(k)=z
v(z,k), \quad k\in\Z.
\\
(ii) &\quad (W u(z,\cdot))(k)=z v(z,k), \quad (V v(z,\cdot))(k) =
u(z,k), \quad k\in\Z.
\\
(iii) &\quad \binom{u(z,k)}{v(z,k)} = T(z,k)
\binom{u(z,k-1)}{v(z,k-1)}, \quad k\in\Z,  \lb{2.11}
\end{align}
where the transfer matrices $T(z,k)$, $z\in\Cz$, $k\in\Z$, are given
by
\begin{equation}
T(z,k) = \begin{cases} \frac{1}{\rho_{k}} \begin{pmatrix} \al_{k} & z
\\ 1/z & \ol{\al_{k}} \end{pmatrix},  & \text{$k$
odd,}  \\[20pt] \frac{1}{\rho_{k}} \begin{pmatrix} \ol{\al_{k}} & 1 \\
1 & \al_{k} \end{pmatrix}, & \text{$k$ even.} \end{cases} \lb{2.12}
\end{equation}
Here $U$, $V$, and $W$ are understood in the sense of difference
expressions on $\s{\Z}$ rather than difference operators on
$\lt{\Z}$.
\end{lemma}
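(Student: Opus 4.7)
The plan is to establish the three equivalences by (a) using the factorization $U=VW$ to swap freely between (i) and (ii), and (b) unraveling the block structure of $V$ and $W$ from \eqref{2.5} to read off the transfer matrix \eqref{2.12} directly.

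For (i) $\Leftrightarrow$ (ii): since $U=VW$, the equation $Uu(z,\cdot)=zu(z,\cdot)$ combined with $Wu(z,\cdot)=zv(z,\cdot)$ immediately gives $V(zv(z,\cdot))=zu(z,\cdot)$. Because $z\neq 0$, I can divide through to obtain $Vv(z,\cdot)=u(z,\cdot)$. The converse direction is just the same computation read in reverse, again using $z\neq 0$ and $U=VW$. So the first equivalence is essentially formal and requires only the exclusion of $z=0$.

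For (ii) $\Leftrightarrow$ (iii): here I would expand the operator equations coordinate-wise. The key observation is that $V$ is block-diagonal with blocks $\Theta_{2k}$ acting on $\spn\{\delta_{2k-1},\delta_{2k}\}$, while $W$ is block-diagonal with blocks $\Theta_{2k+1}$ acting on $\spn\{\delta_{2k},\delta_{2k+1}\}$. Thus condition (ii) is equivalent to the system of $2\times 2$ block relations
\begin{equation*}
\Theta_{2k+1}\binom{u(z,2k)}{u(z,2k+1)}=z\binom{v(z,2k)}{v(z,2k+1)},\qquad
\Theta_{2k}\binom{v(z,2k-1)}{v(z,2k)}=\binom{u(z,2k-1)}{u(z,2k)},
\end{equation*}
for every $k\in\Z$. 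From the first, I can solve the top component for $u(z,2k+1)$ in terms of $u(z,2k)$ and $v(z,2k)$ (dividing by $\rho_{2k+1}>0$), then substitute into the bottom component and simplify using $\rho_{2k+1}^2+|\al_{2k+1}|^2=1$; this produces precisely $T(z,2k+1)$ as in the odd case of \eqref{2.12}. From the second block equation I do the analogous manipulation to step from index $2k-1$ to index $2k$, recovering the even case of \eqref{2.12}. Conversely, \eqref{2.11} for two consecutive indices $2k$, $2k+1$ (resp.\ $2k-1$, $2k$) reassembles exactly the $\Theta_{2k+1}$ block of $Wu=zv$ (resp.\ the $\Theta_{2k}$ block of $Vv=u$), so the equivalence is reversible.

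The real work is bookkeeping rather than mathematical difficulty: one must be careful that the block of $V$ involves $\Theta_{2k}$ on the pair $(2k-1,2k)$ while the block of $W$ involves $\Theta_{2k+1}$ on $(2k,2k+1)$, so the transfer-matrix step from even-to-odd is governed by $W$ and from odd-to-even by $V$, which is exactly why $T(z,k)$ depends on the parity of $k$ and why the factor $z$ (resp.\ $1/z$) appears only in the odd-index transfer matrix. The main potential obstacle is keeping these parity conventions straight and verifying that the two algebraic simplifications both reduce cleanly via $\rho_k^2=1-|\al_k|^2$; once that is done, comparison with \eqref{2.12} is immediate.
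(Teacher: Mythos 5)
Your proof is correct: the factorization $U=VW$ together with $z\neq 0$ gives (i)$\Leftrightarrow$(ii), and your block-by-block unraveling of $V$ and $W$ (solving the $\Theta_{2k+1}$ block of $Wu=zv$ for the step $2k\to 2k+1$ and the $\Theta_{2k}$ block of $Vv=u$ for the step $2k-1\to 2k$, each simplification reversible via $\rho_k^2=1-|\al_k|^2$) is exactly the standard argument, with all parity conventions handled correctly. The paper itself gives no proof --- it recalls Lemma \ref{l2.2} from \cite{GZ06} --- so there is nothing further to compare; your computation is the one that reference carries out.
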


If one sets $\al_{k_0} = e^{it}$, $t\in [0,2\pi)$, for some reference
point $k_0\in\Z$, then the CMV operator (denoted in this case by
$U^{(t)}_{k_0}$) splits into a direct sum of two half-lattice
operators $U_{-,k_0-1}^{(t)}$ and $U_{+,k_0}^{(t)}$ acting on
$\lt{(-\infty,k_0-1]\cap\Z}$ and on $\lt{[k_0,\infty)\cap\Z}$,
respectively. Explicitly, one obtains
\begin{align}
\begin{split}
& U^{(t)}_{k_0}=U^{(t)}_{-,k_0-1} \oplus U^{(t)}_{+,k_0} \, \text{ on
} \, \lt{(-\infty,k_0-1]\cap\Z} \oplus \lt{[k_0,\infty)\cap\Z}
\\ & \text{if } \, \al_{k_0} = e^{it}, \; t\in [0,2\pi).
\lb{2.13}
\end{split}
\end{align}
(Strictly, speaking, setting $\al_{k_0} = e^{it}$, $t\in[0,2\pi)$,
for some reference point $k_0\in\Z$ contradicts our basic Hypothesis
\ref{h2.1}. However, as long as the exception to Hypothesis
\ref{h2.1} refers to only one site, we will safely ignore this
inconsistency in favor of the notational simplicity it provides by
avoiding the introduction of a properly modified hypothesis on
$\{\alpha_k\}_{k\in\Z}$.) Similarly, one obtains $V^{(t)}_{k_0}$,
$W^{(t)}_{k_0}$, $V^{(t)}_{\pm,k_0}$, and $W^{(t)}_{\pm,k_0}$, so
that
\begin{align}
& V^{(t)}_{k_0}=V^{(t)}_{-,k_0-1} \oplus V^{(t)}_{+,k_0}, &&
W^{(t)}_{k_0}=W^{(t)}_{-,k_0-1} \oplus W^{(t)}_{+,k_0}, \no
\\
& U^{(t)}_{k_0}=V^{(t)}_{k_0} W^{(t)}_{k_0}, && U^{(t)}_{\pm,k_0} =
V^{(t)}_{\pm,k_0} W^{(t)}_{\pm,k_0}. \lb{2.14}
\end{align}
For simplicity we will abbreviate
\begin{equation}
U_{\pm,k_0} =
U_{\pm,k_0}^{(t=0)}=V_{\pm,k_0}^{(t=0)}W_{\pm,k_0}^{(t=0)}
=V_{\pm,k_0} W_{\pm,k_0}.  \lb{2.15}
\end{equation}

It is instructive to introduce one more sequence of Verblunsky
coefficients $\be=\{\be_k\}_{k\in\Z}\in\s{\Z}$ by
\begin{align}
\be_k=e^{-it}\al_k, \quad k\in\Z, \lb{2.17}
\end{align}
so that the sequence $\{\rho_k\}_{k\in\Z}$ is unchanged and
$\al_{k_0}=e^{it}$ corresponds to $\be_{k_0}=1$. Then the CMV
operators $U_\be$ and $U_{\pm,k_0;\be}$ associated with $\be$ are
unitarily equivalent to the corresponding CMV operators $U_\al$ and
$U^{(t)}_{\pm,k_0;\al}$ associated with $\al$. Indeed, one verifies
that
\begin{align}
\begin{pmatrix}e^{-it/2}&0\\0&e^{it/2}\end{pmatrix}
\begin{pmatrix} -\al_k & \rho_k \\ \rho_k & \ol{\al_k}\end{pmatrix}
\begin{pmatrix}e^{-it/2}&0\\0&e^{it/2}\end{pmatrix} =
\begin{pmatrix} -\be_k & \rho_k \\ \rho_k & \ol{\be_k}\end{pmatrix},
\quad k\in\Z, \lb{2.17a}
\end{align}
and hence, setting $A$ to be the following diagonal unitary operator
on $\lt{\Z}$,
\begin{align}
A = e^{-it/2}\dodd + e^{it/2}\deven,
\end{align}
one obtains for the full-lattice and the direct sum of half-lattice
CMV operators,
\begin{align}
& AU_{\al}A^* = [AV_{\al}A] [A^*W_{\al}A^*] = V_{\be}W_{\be} =
U_{\be}, \lb{2.17b}
\\
& AU^{(t)}_{k_0;\al}A^* = \big[AV^{(t)}_{k_0;\al}A\big]
\big[A^*W^{(t)}_{k_0;\al}A^*\big] = V_{k_0;\be}W_{k_0;\be} = U_{k_0;\be}.
\lb{2.17c}
\end{align}
We refer to \cite[Sect. 3]{GZ06a} for additional results on CMV
operators with Verblunsky coefficients related via \eqref{2.17}.

Now we turn to our principal result of this section.

\begin{theorem} \lb{t2.3}
Fix $t_1, t_2\in[0,2\pi)$, $k_0\in\Z$, $z\in\CdD$, and let
$U^{(t_1,t_2)}_{k_0}$ denote the following unitary operator on
$\lt{\Z}$,
\begin{align}
U^{(t_1,t_2)}_{k_0} = U^{(t_1)}_{-,k_0-1} \oplus U^{(t_2)}_{+,k_0}.
\lb{2.18}
\end{align}
Then $U-U^{(t_1,t_2)}_{k_0}$ and
$(U-zI)^{-1}-\big(U^{(t_1,t_2)}_{k_0}-zI\big)^{-1}$ are of rank one if and
only if the relation $t_1=2\arg\big[i(\al_{k_0}e^{-it_2/2}-e^{it_2/2})\big]$ 
holds. Otherwise, these differences are of rank two. In particular,
$U-U^{(t)}_{k_0}$ and $(U-zI)^{-1}-\big(U^{(t)}_{k_0}-zI\big)^{-1}$ are of
rank two for any $t\in[0,2\pi)$.
\end{theorem}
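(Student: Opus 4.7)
The plan is to reduce the entire statement to a rank analysis of a single $2\times 2$ matrix $M$ built from $\al_{k_0}$, $\rho_{k_0}$, $e^{it_1}$, $e^{-it_2}$. The starting observation is that the coefficient $\al_{k_0}$ enters the factorization $U = VW$ only through the single block $\Te_{k_0}$, which sits in $V$ when $k_0$ is even and in $W$ when $k_0$ is odd. Fixing $k_0$ even for concreteness (the odd case is handled by the same argument applied to columns rather than rows), $\al_{k_0}$ influences only rows $k_0-1$ and $k_0$ of $U$. Since $U^{(t_1,t_2)}_{k_0}$ differs from $U$ precisely by replacing $\al_{k_0}$ with $e^{it_1}$ in the left block, with $e^{it_2}$ in the right block, and by killing the cross-block entries, the difference $D := U - U^{(t_1,t_2)}_{k_0}$ is supported in the $2\times 4$ block of rows $\{k_0-1, k_0\}$ and columns $\{k_0-2, k_0-1, k_0, k_0+1\}$.

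Reading the eight nonzero entries off \eqref{2.8} and regrouping, this $2\times 4$ block factors as the product $MN$ with
\begin{equation*}
M = \begin{pmatrix} e^{it_1} - \al_{k_0} & \rho_{k_0} \\ \rho_{k_0} & \ol{\al_{k_0}} - e^{-it_2} \end{pmatrix}, \qquad
N = \begin{pmatrix} \rho_{k_0-1} & \ol{\al_{k_0-1}} & 0 & 0 \\ 0 & 0 & -\al_{k_0+1} & \rho_{k_0+1} \end{pmatrix}.
\end{equation*}
By Hypothesis~\ref{h2.1}, $\rho_{k_0\pm 1} > 0$, so the two rows of $N$ are supported on disjoint index sets and both nonzero; thus $N$ has rank $2$, and consequently $\rank D = \rank M$. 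Because $\rho_{k_0} > 0$, the matrix $M$ is never zero, so $\rank D \in \{1, 2\}$ and $\rank D = 1$ iff $\det M = 0$. Expanding and using $\rho_{k_0}^2 = 1 - \abs{\al_{k_0}}^2$, the condition $\det M = 0$ solves for $e^{it_1} = (1 - \al_{k_0}e^{-it_2})/(\ol{\al_{k_0}} - e^{-it_2})$; multiplying numerator and denominator by $e^{it_2/2}$ rewrites this as $e^{it_1} = -w/\ol{w}$, where $w := \al_{k_0}e^{-it_2/2} - e^{it_2/2}$ (nonzero since $\abs{\al_{k_0}} < 1$). Hence $e^{it_1} = -e^{2i\arg w} = e^{2i\arg(iw)}$, which is exactly the stated relation $t_1 = 2\arg[i(\al_{k_0}e^{-it_2/2} - e^{it_2/2})]$.

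For the special case $t_1 = t_2 = t$, the equation $\det M = 0$ becomes $\abs{e^{it} - \al_{k_0}}^2 = 1 - \abs{\al_{k_0}}^2$, equivalently $2\Re(\al_{k_0}e^{-it}) = 1 + \abs{\al_{k_0}}^2$; but $2\Re(\al_{k_0}e^{-it}) \le 2\abs{\al_{k_0}} < 1 + \abs{\al_{k_0}}^2$ (the strict inequality coming from $(1 - \abs{\al_{k_0}})^2 > 0$), contradicting Hypothesis~\ref{h2.1}. Hence $\rank(U - U^{(t)}_{k_0}) = 2$ for every $t \in [0, 2\pi)$. For the resolvent difference, the second resolvent identity
\begin{equation*}
(U - zI)^{-1} - \bigl(U^{(t_1,t_2)}_{k_0} - zI\bigr)^{-1} = \bigl(U^{(t_1,t_2)}_{k_0} - zI\bigr)^{-1}\bigl(U^{(t_1,t_2)}_{k_0} - U\bigr)(U - zI)^{-1},
\end{equation*}
together with the fact that both $U$ and $U^{(t_1,t_2)}_{k_0}$ are unitary (so their spectra are contained in $\dD$ and both resolvents are bounded and boundedly invertible on $\lt{\bbZ}$ for $z \in \CdD$), shows that the rank of the resolvent difference equals $\rank(U - U^{(t_1,t_2)}_{k_0})$, since left and right multiplication by invertible operators preserves rank. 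The main technical obstacle I anticipate is the algebraic rearrangement recognizing $\det M = 0$ as the specific $\arg$-form $t_1 = 2\arg[i(\al_{k_0}e^{-it_2/2} - e^{it_2/2})]$; the remaining steps reduce to a direct matrix-entry calculation plus the standard resolvent-identity argument.
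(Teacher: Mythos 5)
Your proposal is correct and follows essentially the same route as the paper's proof: both reduce the rank of $U-U^{(t_1,t_2)}_{k_0}$ to that of the same $2\times 2$ matrix $M$ from \eqref{2.21}, characterize rank one by $\det M=0$, perform the identical algebraic rearrangement to reach $t_1=2\arg\big[i(\al_{k_0}e^{-it_2/2}-e^{it_2/2})\big]$, and handle the resolvents via the second resolvent identity. The only (cosmetic) difference is that the paper isolates the block by factoring off the unitary $V$ (resp.\ $W$) via \eqref{2.20}, whereas you factor the nonzero $2\times 4$ block of the difference as $MN$ with $N$ of full row rank; your explicit inequality $2\Re(\al_{k_0}e^{-it})\le 2|\al_{k_0}|<1+|\al_{k_0}|^2$ for the case $t_1=t_2$ is a nice, slightly more detailed justification of the final assertion than the paper provides.
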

\begin{proof}
Similar to \eqref{2.18} we introduce unitary operators
$V^{(t_1,t_2)}_{k_0}$ and $W^{(t_1,t_2)}_{k_0}$ by
\begin{align}
V^{(t_1,t_2)}_{k_0} = V^{(t_1)}_{-,k_0-1} \oplus V^{(t_2)}_{+,k_0}
\,\text{ and }\, W^{(t_1,t_2)}_{k_0} = W^{(t_1)}_{-,k_0-1} \oplus
W^{(t_2)}_{+,k_0} \,\text{ on }\, \lt{\Z}. \lb{2.19}
\end{align}
Then $U^{(t_1,t_2)}_{k_0}=V^{(t_1,t_2)}_{k_0}W^{(t_1,t_2)}_{k_0}$ by
\eqref{2.14}, and hence, it follows from \eqref{2.5} that
\begin{align}
U-U^{(t_1,t_2)}_{k_0} =
\begin{cases}
V\big(W-W^{(t_1,t_2)}_{k_0}\big), & \text{$k_0$ odd,} \\
\big(V-V^{(t_1,t_2)}_{k_0}\big)W, & \text{$k_0$ even.}
\end{cases} \lb{2.20}
\end{align}
For $k_0$ odd, $D=W-W^{(t_1,t_2)}_{k_0}$ is block-diagonal with all
its $2\times 2$ blocks on the diagonal being zero except for one
which has the following form
\begin{align}
\begin{pmatrix}
D_{k_0-1,k_0-1} & D_{k_0-1,k_0} \\ D_{k_0,k_0-1}  & D_{k_0,k_0}
\end{pmatrix}
=
\begin{pmatrix}
-\al_{k_0} & \rho_0 \\ \rho_{k_0} & \ol{\al_{k_0}}
\end{pmatrix}
-
\begin{pmatrix}
-e^{it_1} & 0 \\ 0 & e^{-it_2}
\end{pmatrix}. \lb{2.21}
\end{align}
Thus, the difference $U-U^{(t_1,t_2)}_{k_0}$ in \eqref{2.20} is
always of rank one or two and it is precisely of rank one if and only
if the $2\times 2$ matrix in \eqref{2.21} is of rank 1. The latter
case is equivalent to
\begin{align}
0&=\det\begin{pmatrix} D_{k_0-1,k_0-1} & D_{k_0-1,k_0} \\
D_{k_0,k_0-1} & D_{k_0,k_0}
\end{pmatrix}
= e^{it_1}\ol{\al_{k_0}} + e^{-it_2}\al_{k_0} - e^{i(t_1-t_2)} - 1
\no
\\
&= (\ol{\al_{k_0}}-e^{-it_2})\left(e^{it_1} + e^{-it_2}
\f{\al_{k_0}-e^{it_2}}{\ol{\al_{k_0}}-e^{-it_2}}\right) \no
\\
&= (\ol{\al_{k_0}}-e^{-it_2})\left(e^{it_1} -
\f{i\al_{k_0}e^{-it_2/2}-e^{it_2/2}}
{\ol{i\al_{k_0}e^{-it_2/2}-e^{it_2/2}}}\right),\lb{2.22a}
\end{align}
which holds if and only if
$t_1=2\arg\big[i(\al_{k_0}e^{-it_2/2}-e^{it_2/2})\big]$. The case of
even $k_0$ follows similarly.

Finally, the statement for the resolvents follows from the result
for $U-U^{(t_1,t_2)}_{k_0}$ and the following identity,
\begin{align}
(U-zI)^{-1}-(U^{(t_1,t_2)}_{k_0}-zI)^{-1} = -(U-zI)^{-1}
\Big[U-U^{(t_1,t_2)}_{k_0}\Big] (U^{(t_1,t_2)}_{k_0}-zI)^{-1}.
\end{align}
\end{proof}

Next we present formulas that link various spectral theoretic objects
associated with half-lattice CMV operators $U^{(t)}_{\pm,k_0}$ for
different values of $t\in[0,2\pi)$. We start with an analog of Lemma
\ref{l2.2} for difference expressions $U^{(t)}_{\pm,k_0}$,
$V^{(t)}_{\pm,k_0}$, and $W^{(t)}_{\pm,k_0}$. In the special case
$t=0$ it is proven in \cite[Lem. 2.3]{GZ06} and the general case
below follows immediately from the special case and the observation
of unitary equivalence in \eqref{2.17c}.

\begin{lemma} \lb{l2.3}
Fix $t\in[0,2\pi)$, $k_0\in\Z$, $z\in\C\backslash\{0\}$, and let
$\big\{\hat p^{(t)}_+(z,k,k_0)\big\}_{k\geq k_0}$,
$\big\{\hat r^{(t)}_+(z,k,k_0)\big\}_{k\geq k_0}\in\s{[k_0,\infty)\cap\Z}$. Then the
following items $(i)$--$(iii)$ are equivalent:
\begin{align}
(i) &\quad \big(U^{(t)}_{+,k_0} \hat p^{(t)}_+(z,\cdot,k_0)\big)(k) = z \hat
p^{(t)}_+(z,k,k_0), \no \\
&\quad \big(W^{(t)}_{+,k_0} \hat p^{(t)}_+(z,\cdot,k_0)\big)(k) = z \hat
r^{(t)}_+(z,k,k_0), \quad k\geq k_0.
\\
(ii) &\quad \big(W^{(t)}_{+,k_0} \hat p^{(t)}_+(z,\cdot,k_0)\big)(k) = z
\hat r^{(t)}_+(z,k,k_0), \no \\
&\quad \big(V^{(t)}_{+,k_0} \hat r^{(t)}_+(z,\cdot,k_0)\big)(k) = \hat
p^{(t)}_+(z,k,k_0), \quad k\geq k_0.
\\
(iii) &\quad \binom{\hat p^{(t)}_+(z,k,k_0)}{\hat r^{(t)}_+(z,k,k_0)}
= T(z,k) \binom{\hat p^{(t)}_+(z,k-1,k_0)}{\hat
r^{(t)}_+(z,k-1,k_0)}, \quad k > k_0, \no \\
&\quad \hat p^{(t)}_+(z,k_0,k_0) =
\begin{cases}
ze^{it} \hat r^{(t)}_+(z,k_0,k_0), & \text{$k_0$ odd}, \\
e^{-it} \hat r^{(t)}_+(z,k_0,k_0), & \text{$k_0$ even}.
\end{cases}
\end{align}
Similarly, let $\big\{\hat p^{(t)}_-(z,k,k_0)\big\}_{k\leq k_0}$,
$\big\{\hat r^{(t)}_-(z,k,k_0)\big\}_{k\leq k_0}\in\s{(-\infty,k_0]\cap\Z}$. Then the
following items $(iv)$--$(vi)$are equivalent:
\begin{align}
(iv) &\quad \big(U^{(t)}_{-,k_0} \hat p^{(t)}_-(z,\cdot,k_0)\big)(k) = z
\hat p^{(t)}_-(z,k,k_0), \no \\
&\quad \big(W^{(t)}_{-,k_0} \hat p^{(t)}_-(z,\cdot,k_0)\big)(k) = z \hat
r^{(t)}_-(z,k,k_0), \quad k\leq k_0.
\\
(v) &\quad \big(W^{(t)}_{-,k_0} \hat p^{(t)}_-(z,\cdot,k_0)\big)(k) = z
\hat r^{(t)}_-(z,k,k_0), \no \\
&\quad \big(V^{(t)}_{-,k_0} \hat r^{(t)}_-(z,\cdot,k_0)\big)(k) = \hat
p^{(t)}_-(z,k,k_0), \quad k\leq k_0.
\\
(vi) &\quad \binom{\hat p^{(t)}_-(z,k-1,k_0)}{\hat
r^{(t)}_-(z,k-1,k_0)}=T(z,k)^{-1} \binom{\hat
p^{(t)}_-(z,k,k_0)}{\hat r^{(t)}_-(z,k,k_0)}, \quad k \leq k_0,\no
\\
&\quad \hat p^{(t)}_-(z,k_0,k_0) =
\begin{cases}
-e^{it}\hat r^{(t)}_-(z,k_0,k_0), & \text{$k_0$ odd,}
\\
-ze^{-it} \hat r^{(t)}_-(z,k_0,k_0), & \text{$k_0$ even.}
\end{cases}
\end{align}
\end{lemma}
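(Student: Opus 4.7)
The plan is to reduce the general $t\in[0,2\pi)$ case to the $t=0$ case \cite[Lem.\ 2.3]{GZ06} via the unitary equivalence \eqref{2.17c}, exactly as the pre-lemma remark suggests. Set $\be_k = e^{-it}\al_k$, so that $\be_{k_0}=1$; since $A = e^{-it/2}\dodd + e^{it/2}\deven$ is diagonal on $\lt{\Z}$, its restrictions to $\lt{[k_0,\infty)\cap\Z}$ and $\lt{(-\infty,k_0]\cap\Z}$ are unitary operators intertwining $U^{(t)}_{\pm,k_0;\al}$ with $U_{\pm,k_0;\be}$.

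The equivalence (i) $\Leftrightarrow$ (ii) is a direct consequence of $U^{(t)}_{+,k_0} = V^{(t)}_{+,k_0}W^{(t)}_{+,k_0}$: granted $W^{(t)}_{+,k_0}\hat p^{(t)}_+ = z\hat r^{(t)}_+$, the identity $U^{(t)}_{+,k_0}\hat p^{(t)}_+ = z\hat p^{(t)}_+$ is equivalent to $V^{(t)}_{+,k_0}\hat r^{(t)}_+ = \hat p^{(t)}_+$. No $\al$-dependent input enters, so this step works verbatim for any $t$.

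To pass between (i) and (iii) I would define
\begin{equation*}
\hat p^{(0);\be}_+(z,k,k_0) = A_{k,k}\,\hat p^{(t);\al}_+(z,k,k_0), \qquad
\hat r^{(0);\be}_+(z,k,k_0) = A_{k,k}^*\,\hat r^{(t);\al}_+(z,k,k_0),
\end{equation*}
which, by the intertwining relations $V_\be = AV_\al A$ and $W_\be = A^* W_\al A^*$ from \eqref{2.17b}, converts the $\al$-eigenvalue problem into the $\be$-eigenvalue problem at $t=0$. The $t=0$ case \cite[Lem.\ 2.3]{GZ06} then supplies the $\be$-transfer-matrix recurrence with $T_\be(z,k)$ together with the $t=0$ boundary condition at $k_0$. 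Reading this back through the $A$-factors requires two verifications: the conjugation identity $T(z,k) = E_k^{-1}\,T_\be(z,k)\,E_{k-1}$ with $E_k = \diag(A_{k,k}, A_{k,k}^*)$, which is a direct $2\times 2$ computation performed separately for $k$ odd and $k$ even, and the phase substitution at the boundary producing the stated factors (e.g.\ for $k_0$ odd, $\hat p^{(t);\al}_+(k_0) = e^{it/2}\hat p^{(0);\be}_+(k_0) = e^{it/2}\cdot z\,\hat r^{(0);\be}_+(k_0) = ze^{it}\hat r^{(t);\al}_+(k_0)$).

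Items (iv)--(vi) follow in complete analogy on $\lt{(-\infty,k_0]\cap\Z}$, now propagating downward via $T(z,k)^{-1}$. The extra minus signs in (vi) arise because the decoupling of $U^{(t)}_{-,k_0}$ retains the top-left entry $-e^{it}$ of $\Te^{(t)}_{k_0+1}$ rather than its bottom-right $e^{-it}$, and the parity of $k_0$ now determines whether this entry sits in $W^{(t)}_{-,k_0}(k_0,k_0)$ or in $V^{(t)}_{-,k_0}(k_0,k_0)$ (the parity assignment being swapped relative to the right half-lattice). The main obstacle throughout is the bookkeeping of the half-integer phases $e^{\pm it/2}$ across the parity cases, but once this is sorted out the argument reduces to direct substitution and an elementary $2\times 2$ matrix computation.
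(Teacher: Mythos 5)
Your proposal is correct and follows the same route the paper takes: the paper disposes of the general $t$ by citing the $t=0$ case of \cite[Lem.\ 2.3]{GZ06} together with the unitary equivalence \eqref{2.17c}, which is precisely the reduction you carry out (and your explicit verifications --- the intertwining $\hat p^{(0);\be}_\pm = A\hat p^{(t);\al}_\pm$, $\hat r^{(0);\be}_\pm = A^*\hat r^{(t);\al}_\pm$, the conjugation $T_\al(z,k)=E_k^{-1}T_\be(z,k)E_{k-1}$, and the phase bookkeeping at $k_0$ in each parity case --- all check out).
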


In the following, we denote by
$\Big(\begin{smallmatrix}p^{(t)}_\pm(z,k,k_0)\\
r^{(t)}_\pm(z,k,k_0)\end{smallmatrix}\Big)_{k\in\Z}$ and
$\Big(\begin{smallmatrix}q^{(t)}_\pm(z,k,k_0)\\
s^{(t)}_\pm(z,k,k_0)\end{smallmatrix}\Big)_{k\in\Z}$,
$z\in\C\backslash\{0\}$, four linearly independent solutions of
\eqref{2.11} with the initial conditions:
\begin{align}
\binom{p^{(t)}_+(z,k_0,k_0)}{r^{(t)}_+(z,k_0,k_0)} = \begin{cases}
\binom{z\ol{\ga}}{\ga}, &
\text{$k_0$ odd,} \\[1mm]
\binom{\ga}{\ol{\ga}}, & \text{$k_0$ even,} \end{cases} \quad
\binom{q^{(t)}_+(z,k_0,k_0)}{s^{(t)}_+(z,k_0,k_0)} = \begin{cases}
\binom{z\ol{\ga}}{-\ga}, &
\text{$k_0$ odd,} \\[1mm]
\binom{-\ga}{\ol{\ga}}, & \text{$k_0$ even.} \end{cases} \lb{2.22}
\\
\binom{p^{(t)}_-(z,k_0,k_0)}{r^{(t)}_-(z,k_0,k_0)} = \begin{cases}
\binom{\ol{\ga}}{-\ga}, &
\text{$k_0$ odd,} \\[1mm]
\binom{-z\ga}{\ol{\ga}}, & \text{$k_0$ even,} \end{cases} \quad
\binom{q^{(t)}_-(z,k_0,k_0)}{s^{(t)}_-(z,k_0,k_0)} =
\begin{cases} \binom{\ol{\ga}}{\ga}, &
\text{$k_0$ odd,} \\[1mm]
\binom{z\ga}{\ol{\ga}}, & \text{$k_0$ even,} \end{cases} \lb{2.23}
\end{align}
where $\ga=e^{-it/2}$. Then it follows that $p^{(t)}_\pm(z,k,k_0)$,
$q^{(t)}_\pm(z,k,k_0)$, $r^{(t)}_\pm(z,k,k_0)$, and
$s^{(t)}_\pm(z,k,k_0)$, $k,k_0\in\Z$, are Laurent polynomials in $z$,
that is, finite linear combinations of terms $z^k$, k$\in\Z$, with
complex-valued coefficients.

Since all of the above sequences satisfy the same recursion relation
\eqref{2.11} which can have at most two linearly independent
solutions, these sequences satisfy various identities. Some of them
we state in the following lemma.

\begin{lemma} \lb{l2.4}
Let $t_1,t_2\in[0,2\pi)$ and $\ga_j=e^{-it_j/2}$, $j=1,2$. Then
\begin{align}
&\binom{q^{(t_2)}_\pm(z,\cdot,k_0)}{s^{(t_2)}_\pm(z,\cdot,k_0)} =
\Re(\ga_1\ol{\ga_2})
\binom{q^{(t_1)}_\pm(z,\cdot,k_0)}{s^{(t_1)}_\pm(z,\cdot,k_0)} +
i\Im(\ga_1\ol{\ga_2})
\binom{p^{(t_1)}_\pm(z,\cdot,k_0)}{r^{(t_1)}_\pm(z,\cdot,k_0)},
\lb{2.24}
\\
&\binom{p^{(t_2)}_\pm(z,\cdot,k_0)}{r^{(t_2)}_\pm(z,\cdot,k_0)} =
i\Im(\ga_1\ol{\ga_2})
\binom{q^{(t_1)}_\pm(z,\cdot,k_0)}{s^{(t_1)}_\pm(z,\cdot,k_0)} +
\Re(\ga_1\ol{\ga_2})
\binom{p^{(t_1)}_\pm(z,\cdot,k_0)}{r^{(t_1)}_\pm(z,\cdot,k_0)},
\lb{2.25}
\\
&\binom{q^{(t_2)}_-(z,\cdot,k_0)}{s^{(t_2)}_-(z,\cdot,k_0)} =
\frac{\ga_1\ol{\ga_2}-\ol{\ga_1}\ga_2z}{2z^{k_0 \, ({\rm mod}\,2)}}
\binom{q^{(t_1)}_+(z,\cdot,k_0)}{s^{(t_1)}_+(z,\cdot,k_0)} +
\frac{\ga_1\ol{\ga_2}+\ol{\ga_1}\ga_2z}{2z^{k_0 \, ({\rm mod}\,2)}}
\binom{p^{(t_1)}_+(z,\cdot,k_0)}{r^{(t_1)}_+(z,\cdot,k_0)},
\label{2.26}
\\
&\binom{p^{(t_2)}_-(z,\cdot,k_0)}{r^{(t_2)}_-(z,\cdot,k_0)} =
\frac{\ga_1\ol{\ga_2}+\ol{\ga_1}\ga_2z}{2z^{k_0 \, ({\rm mod}\,2)}}
\binom{q^{(t_1)}_+(z,\cdot,k_0)}{s^{(t_1)}_+(z,\cdot,k_0)} +
\frac{\ga_1\ol{\ga_2}-\ol{\ga_1}\ga_2z}{2z^{k_0 \, ({\rm mod}\,2)}}
\binom{p^{(t_1)}_+(z,\cdot,k_0)}{r^{(t_1)}_+(z,\cdot,k_0)},
\lb{2.26a}
\\
&\binom{q^{(t_2)}_-(z,\cdot,k_0-1)}{s^{(t_2)}_-(z,\cdot,k_0-1)} =
\frac{i\Im(\ga_1\ol{\ga_2}+\al_{k_0}\ga_1\ga_2)}{\rho_{k_0}}
\binom{q^{(t_1)}_+(z,\cdot,k_0)}{s^{(t_1)}_+(z,\cdot,k_0)} \no
\\ &\hspace{35mm}
+ \frac{\Re(\ga_1\ol{\ga_2}+\al_{k_0}\ga_1\ga_2)}{\rho_{k_0}}
\binom{p^{(t_1)}_+(z,\cdot,k_0)}{r^{(t_1)}_+(z,\cdot,k_0)},
\label{2.27}
\\
&\binom{p^{(t_2)}_-(z,\cdot,k_0-1)}{r^{(t_2)}_-(z,\cdot,k_0-1)} =
\frac{\Re(\ga_1\ol{\ga_2}-\al_{k_0}\ga_1\ga_2)}{\rho_{k_0}}
\binom{q^{(t_1)}_+(z,\cdot,k_0)}{s^{(t_1)}_+(z,\cdot,k_0)} \no
\\ &\hspace{35mm}
+ \frac{i\Im(\ga_1\ol{\ga_2}-\al_{k_0}\ga_1\ga_2)}{\rho_{k_0}}
\binom{p^{(t_1)}_+(z,\cdot,k_0)}{r^{(t_1)}_+(z,\cdot,k_0)}.
\lb{2.27a}
\end{align}
In particular, whenever
$t_1=2\arg\big[i(\al_{k_0}e^{-it_2/2}-e^{it_2/2})\big]$ identity
\eqref{2.27a} simplifies to
\begin{align}
\binom{p^{(t_2)}_-(z,\cdot,k_0-1)}{r^{(t_2)}_-(z,\cdot,k_0-1)} &=
\frac{i\abs{e^{it_2}-\al_{k_0}}}{\rho_{k_0}}
\binom{p^{(t_1)}_+(z,\cdot,k_0)}{r^{(t_1)}_+(z,\cdot,k_0)}. \lb{2.28}
\end{align}
\end{lemma}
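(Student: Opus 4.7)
The plan is to exploit the fact that, by Lemmas \ref{l2.2} and \ref{l2.3}, every pair $\binom{p^{(t)}_\pm}{r^{(t)}_\pm}$ and $\binom{q^{(t)}_\pm}{s^{(t)}_\pm}$ is a solution of the transfer-matrix recursion \eqref{2.11}, and therefore lives in the common two-dimensional solution space at each site. Since $\binom{p^{(t_1)}_+(z,k_0,k_0)}{r^{(t_1)}_+(z,k_0,k_0)}$ and $\binom{q^{(t_1)}_+(z,k_0,k_0)}{s^{(t_1)}_+(z,k_0,k_0)}$ are linearly independent by \eqref{2.22}, they form a basis of that space, and every other solution equals a unique linear combination of them with coefficients pinned down by matching values at any single site.

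Each of \eqref{2.24}--\eqref{2.27a} is therefore a finite-dimensional linear-algebra identity that I would verify at a single convenient value of $k$. For \eqref{2.24} and \eqref{2.25} the natural choice is $k=k_0$, where \eqref{2.22} is used directly and the computation is carried out using $|\ga_j|=1$ together with $\Re(\ga_1\ol{\ga_2})+i\Im(\ga_1\ol{\ga_2})=\ga_1\ol{\ga_2}$ and its conjugate. For \eqref{2.26} and \eqref{2.26a} the choice is still $k=k_0$, now combining \eqref{2.22} with \eqref{2.23}; the cases $k_0$ odd and $k_0$ even must be treated separately and get reconciled by the $z^{k_0\,(\mathrm{mod}\,2)}$ factor in the denominator. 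For \eqref{2.27} and \eqref{2.27a} I would again check at $k=k_0$, but now the left-hand side must first be propagated from its initial site $k_0-1$ up to $k_0$ via $T(z,k_0)$; this step injects $\al_{k_0}$ and $\ol{\al_{k_0}}$ into the coefficients and produces the $\rho_{k_0}$ denominator from the $1/\rho_{k_0}$ built into $T(z,k_0)$.

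Granting \eqref{2.27a}, the special identity \eqref{2.28} is a short algebraic corollary. Setting $w=i(\al_{k_0}e^{-it_2/2}-e^{it_2/2})$, the hypothesis $t_1=2\arg w$ translates to $e^{-it_1/2}=\ol{w}/|w|$, and a direct computation gives
\begin{equation*}
\ga_1\ol{\ga_2}-\al_{k_0}\ga_1\ga_2 = e^{-it_1/2}\bigl(e^{it_2/2}-\al_{k_0}e^{-it_2/2}\bigr) = \frac{\ol{w}}{|w|}\cdot iw = i|w| = i\abs{e^{it_2}-\al_{k_0}}.
\end{equation*}
This is purely imaginary, so the coefficient of $\binom{q^{(t_1)}_+}{s^{(t_1)}_+}$ in \eqref{2.27a} vanishes, and the coefficient of $\binom{p^{(t_1)}_+}{r^{(t_1)}_+}$ collapses to $i\abs{e^{it_2}-\al_{k_0}}/\rho_{k_0}$, giving exactly \eqref{2.28}.

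The main obstacle is the bookkeeping in the second paragraph: six identities, each with an odd/even parity split, each requiring small matrix computations together with careful tracking of real and imaginary parts of products of the phases $\ga_1\ol{\ga_2}$, $\ga_1\ga_2$, and their $\al_{k_0}$-twisted counterparts. The individual computations are elementary and repetitive; the challenge is to execute them consistently and to recognize at each step that $\Re(X)\pm i\Im(X)$ collapses back to $X$ or $\ol{X}$.
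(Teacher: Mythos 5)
Your proposal is correct and follows exactly the paper's argument: since all sequences involved solve the same two-dimensional recursion \eqref{2.11}, each identity is verified at the single site $k=k_0$ via the initial data \eqref{2.22}--\eqref{2.23}, with $T(z,k_0)$ used to propagate the left-hand sides of \eqref{2.27}--\eqref{2.27a} from $k_0-1$ to $k_0$, and \eqref{2.28} then follows from the algebraic observation that $\ga_1\ol{\ga_2}-\al_{k_0}\ga_1\ga_2=i\abs{e^{it_2}-\al_{k_0}}$ is purely imaginary under the stated condition on $t_1$. Your explicit derivation of \eqref{2.28} is a welcome elaboration of a step the paper leaves implicit; the only cosmetic slip is that the $-$ cases of \eqref{2.24}--\eqref{2.25} also require \eqref{2.23}, not just \eqref{2.22}.
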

\begin{proof}
Since both sides of \eqref{2.24}--\eqref{2.27a} satisfy the same
recursion relation \eqref{2.11} it suffices to check these equalities
only at one point, say at point $k=k_0$. Substituting \eqref{2.22}
and \eqref{2.23} into \eqref{2.24}--\eqref{2.26a} one verifies the
first four identities. Using \eqref{2.22} and \eqref{2.23} once again
and applying transfer matrix $T(z,k_0)$ to the left hand-sides of
\eqref{2.27}--\eqref{2.28} one verifies the last three identities.
\end{proof}

Next, following \cite{GZ06} we introduce half-lattice
Weyl--Titchmarsh $m$-functions associated with the CMV operators
$U^{(t)}_{\pm,k_0}$ by
\begin{align}
\begin{split}
m^{(t)}_\pm(z,k_0) &= \pm
(\delta_{k_0},(U^{(t)}_{\pm,k_0}+zI)(U^{(t)}_{\pm,k_0}-zI)^{-1}
\delta_{k_0})_{\lt{\Z\cap[k_0,\pm\infty)}} \\
& =\pm \oint_\dD d\mu^{(t)}_{\pm}(\zeta,k_0)\,
\frac{\zeta+z}{\zeta-z}, \quad z\in\C\backslash\dD,
\end{split} \lb{2.30}
\end{align}
where
\begin{equation}
d\mu^{(t)}_\pm(\zeta,k_0) = d(\de_{k_0},E_{U^{(t)}_{\pm,k_0}}(\zeta)
\de_{k_0})_{\lt{\Z\cap[k_0,\pm\infty)}}, \quad \zeta\in\dD, \lb{2.31}
\end{equation}
and $dE_{U^{(t)}_{\pm,k_0}}(\cdot)$ denote the operator-valued
spectral measures of the operators $U^{(t)}_{\pm,k_0}$,
\begin{equation}
U^{(t)}_{\pm,k_0}=\oint_{\dD} dE_{U^{(t)}_{\pm,k_0}}(\zeta)\,\zeta.
\end{equation}
Then following the steps of \cite[Cor. 2.14]{GZ06} one verifies that
\begin{equation}
\binom{q^{(t)}_\pm(z,\cdot,k_0)}{s^{(t)}_\pm(z,\cdot,k_0)} +
m^{(t)}_\pm(z,k_0)
\binom{p^{(t)}_\pm(z,\cdot,k_0)}{r^{(t)}_\pm(z,\cdot,k_0)} \in
\lt{[k_0,\pm\infty)\cap\Z}^2,
\;\; z\in\bbC\backslash(\dD\cup\{0\}).   \lb{2.32}
\end{equation}

The special case $t=0$ of the next result is proven in \cite[Cor.
2.16 and Thm. 2.18]{GZ06}. The general case of $t\in[0,2\pi)$ stated
below follows along the same lines and hence we omit the details for
brevity.

\begin{theorem} \lb{t2.5}
Let $t\in[0,2\pi)$ and $k_0\in\Z$. Then there exist unique
Caratheodory $($resp. anti-Caratheodory$)$ functions
$M^{(t)}_\pm(\cdot,k_0)$ such that
\begin{align}
&\binom{u^{(t)}_\pm(z,\cdot,k_0)}{v^{(t)}_\pm(z,\cdot,k_0)} =
\binom{q^{(t)}_+(z,\cdot,k_0)}{s^{(t)}_+(z,\cdot,k_0)} +
M^{(t)}_\pm(z,k_0)
\binom{p^{(t)}_+(z,\cdot,k_0)}{r^{(t)}_+(z,\cdot,k_0)} \in
\lt{[k_0,\pm\infty)\cap\Z}^2, \no \\
& \hspace*{9.5cm} z\in\C\backslash(\dD\cup\{0\}). \lb{2.37}
\end{align}
In addition, sequence
$\Big(\begin{smallmatrix}u^{(t)}_\pm(z,k,k_0)\\
v^{(t)}_\pm(z,k,k_0)\end{smallmatrix}\Big)_{k\in\Z}$ satisfies
\eqref{2.11} and is unique $($up to constant scalar multiples$)$
among all sequence that satisfy \eqref{2.11} and are square summable
near $\pm\infty$.
\end{theorem}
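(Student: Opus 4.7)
Following the template of \cite[Cor.\ 2.16, Thm.\ 2.18]{GZ06} for the special case $t=0$, the strategy is to extract $M^{(t)}_\pm(\cdot,k_0)$ from \eqref{2.32} by rewriting both $\pm$ sides of that identity in the common basis of solutions to \eqref{2.11} consisting of $\binom{p^{(t)}_+(z,\cdot,k_0)}{r^{(t)}_+(z,\cdot,k_0)}$ and $\binom{q^{(t)}_+(z,\cdot,k_0)}{s^{(t)}_+(z,\cdot,k_0)}$, and then to transfer the (anti-)Caratheodory property through that change of basis. On the $+$ side this is immediate: set $M^{(t)}_+(z,k_0):=m^{(t)}_+(z,k_0)$ and let $\binom{u^{(t)}_+(z,\cdot,k_0)}{v^{(t)}_+(z,\cdot,k_0)}$ be the combination on the left-hand side of \eqref{2.32} with the plus sign, whose membership in $\lt{[k_0,\infty)\cap\Z}^2$ is exactly that identity and whose Caratheodory character follows from the Herglotz representation in \eqref{2.30}.

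For the $-$ side, specialize Lemma \ref{l2.4} to $t_1=t_2=t$, so that $\ga_1\ol{\ga_2}=\ol{\ga_1}\ga_2=1$. Identities \eqref{2.26}, \eqref{2.26a} then express $\binom{q^{(t)}_-(z,\cdot,k_0)}{s^{(t)}_-(z,\cdot,k_0)}$ and $\binom{p^{(t)}_-(z,\cdot,k_0)}{r^{(t)}_-(z,\cdot,k_0)}$ as explicit linear combinations of the corresponding $+$-basis vectors with $z$-rational coefficients. Substituting into the minus version of \eqref{2.32} and normalizing so that the coefficient of $\binom{q^{(t)}_+(z,\cdot,k_0)}{s^{(t)}_+(z,\cdot,k_0)}$ equals one yields a representation of the required form, with $M^{(t)}_-(z,k_0)=[(1+z)+m^{(t)}_-(z,k_0)(1-z)]/[(1-z)+m^{(t)}_-(z,k_0)(1+z)]$, independent of the parity of $k_0$ since the common prefactor $1/(2z^{k_0\,(\mathrm{mod}\,2)})$ from \eqref{2.26}, \eqref{2.26a} cancels in the ratio.

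Uniqueness reduces to a two-dimensional linear-algebra observation combined with the limit-point property of the CMV problem: \eqref{2.11} has a two-dimensional solution space spanned by $\binom{p^{(t)}_+(z,\cdot,k_0)}{r^{(t)}_+(z,\cdot,k_0)}$ and $\binom{q^{(t)}_+(z,\cdot,k_0)}{s^{(t)}_+(z,\cdot,k_0)}$; the regular solution $\binom{p^{(t)}_+(z,\cdot,k_0)}{r^{(t)}_+(z,\cdot,k_0)}$ cannot lie in $\lt{[k_0,\pm\infty)\cap\Z}^2$ since $z\in\CdD$ is not an eigenvalue of the unitary operator $U^{(t)}_{\pm,k_0}$; hence the $\ell^2$-near-$\pm\infty$ subspace is one-dimensional, and the normalization fixing the coefficient of $\binom{q^{(t)}_+}{s^{(t)}_+}$ then determines $M^{(t)}_\pm$ uniquely. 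The step I expect to be the main obstacle is verifying the anti-Caratheodory character of $M^{(t)}_-$: the pointwise M\"obius image of a generic anti-Caratheodory function need not be anti-Caratheodory, so one must use the normalization $m^{(t)}_-(0,k_0)=-1$ arising from $\mu^{(t)}_-$ being a probability measure. The cleanest route is to substitute the Herglotz formula $m^{(t)}_-(z,k_0)=-\oint_\dD (\zeta+z)/(\zeta-z)\,d\mu^{(t)}_-(\zeta,k_0)$ into the expression for $M^{(t)}_-$, cancel the common factors of $z$ appearing in both numerator and denominator, and rearrange $-M^{(t)}_-(\cdot,k_0)$ into Herglotz form $\oint_\dD (\zeta+z)/(\zeta-z)\,d\nu(\zeta)$ for a suitable finite positive Borel measure $\nu$ on $\dD$, thereby producing the required (anti-)Caratheodory representation.
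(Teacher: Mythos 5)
Most of your proposal tracks what the paper actually does: the paper omits the proof of Theorem \ref{t2.5} (deferring to \cite[Cor.\ 2.16, Thm.\ 2.18]{GZ06} for $t=0$), but the identities \eqref{2.38} and \eqref{2.40} it records immediately afterwards are exactly the formulas you obtain by specializing Lemma \ref{l2.4} to $t_1=t_2=t$ and renormalizing the minus version of \eqref{2.32}, so your construction of $M^{(t)}_\pm$ and the resulting M\"obius formula for $M^{(t)}_-$ are correct and in the spirit of the paper. One secondary caveat: in the uniqueness step, your assertion that $\binom{p^{(t)}_+(z,\cdot,k_0)}{r^{(t)}_+(z,\cdot,k_0)}$ cannot be square summable near $-\infty$ ``since $z$ is not an eigenvalue of $U^{(t)}_{-,k_0}$'' is not a valid inference as stated, because $p^{(t)}_+$ does not satisfy the boundary condition at $k_0$ that Lemma \ref{l2.3}\,(vi) requires of eigenfunctions of $U^{(t)}_{-,k_0}$. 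The repair is short: if $p^{(t)}_+$ were $\ell^2$ near $-\infty$, then together with $u^{(t)}_-$ the whole two-dimensional solution space of \eqref{2.11} would be $\ell^2$ near $-\infty$, in particular $\binom{p^{(t)}_-(z,\cdot,k_0)}{r^{(t)}_-(z,\cdot,k_0)}$, which does satisfy that boundary condition and would then be an eigenfunction of the unitary $U^{(t)}_{-,k_0}$ at $|z|\neq 1$, a contradiction.

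The genuine gap is in the step you yourself flag as the main obstacle, the anti-Caratheodory property of $M^{(t)}_-$. Substituting the Herglotz formula for $m^{(t)}_-$ into \eqref{2.40} and cancelling the common factor of $z$ leaves $-M^{(t)}_-(z,k_0)=\oint_\dD \f{\zeta-1}{\zeta-z}\,d\mu^{(t)}_-(\zeta,k_0)\big/\oint_\dD \f{\zeta+1}{\zeta-z}\,d\mu^{(t)}_-(\zeta,k_0)$, a \emph{ratio} of two Cauchy transforms of $\mu^{(t)}_-$. There is no algebraic rearrangement of such a ratio into a single integral $\oint_\dD\f{\zeta+z}{\zeta-z}\,d\nu(\zeta)$: by the Riesz--Herglotz theorem the existence of such a $\nu$ is equivalent to $\Re\big({-M^{(t)}_-}\big)\geq 0$ on $\D$, which is precisely what is to be proved, so the step as written is circular. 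The normalization you correctly single out, $m^{(t)}_-(0,k_0)=-1$ (i.e., $\mu^{(t)}_-$ a probability measure), must instead be exploited via the Schwarz lemma: with $G=-m^{(t)}_-(\cdot,k_0)$ Caratheodory and $f=(G-1)(G+1)^{-1}$ its Schur function, $G(0)=1$ gives $f(0)=0$, hence $|f(z)|\leq|z|$ on $\D$; since \eqref{2.45} yields $\Phi^{(t)}_-(z,k_0)=z\,(1-m^{(t)}_-)(1+m^{(t)}_-)^{-1}=-z/f(z)$, one obtains $|\Phi^{(t)}_-(z,k_0)|\geq 1$, and therefore $\Re\big(M^{(t)}_-(z,k_0)\big)=\big(1-|\Phi^{(t)}_-(z,k_0)|^2\big)/|1-\Phi^{(t)}_-(z,k_0)|^2\leq 0$ on $\D$, i.e., $M^{(t)}_-$ is anti-Caratheodory (the values on $\CdD$ being fixed by \eqref{A.7}). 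With that replacement, and the small repair above, your argument is complete.
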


We will call $u^{(t)}_\pm(z,\cdot,k_0)$ and
$v^{(t)}_\pm(z,\cdot,k_0)$ Weyl--Titchmarsh solutions of $U$.
Similarly, we will call $m^{(t)}_\pm(z,k_0)$ as well as
$M^{(t)}_\pm(z,k_0)$ the half-lattice Weyl--Titchmarsh
$m$-functions associated with $U^{(t)}_{\pm,k_0}$. (See also
\cite{Si04a} for a comparison of various alternative notions of
Weyl--Titchmarsh $m$-functions for $U_{+,k_0}$.)

Using \eqref{2.26}--\eqref{2.27a}, \eqref{2.32}, and Theorem
\ref{t2.5} one also verifies that
\begin{align}
M^{(t)}_+(z,k_0) &= m^{(t)}_+(z,k_0), \quad z\in\C\backslash\dD,
\lb{2.38}
\\
M^{(t)}_+(0,k_0) &=1, \lb{2.39}
\\
M^{(t)}_-(z,k_0) &= \frac{\Re(a_{k_0}) +
i\Im(b_{k_0})m^{(t)}_-(z,k_0-1)}{i\Im(a_{k_0}) +
\Re(b_{k_0})m^{(t)}_-(z,k_0-1)} =
\f{(1-z)m^{(t)}_-(z,k_0)+(1+z)}{(1+z)m^{(t)}_-(z,k_0)+(1-z)} \no
\\
&= \f{\big(m^{(t)}_-(z,k_0)+1\big)-z\big(m^{(t)}_-(z,k_0)-1\big)}
{\big(m^{(t)}_-(z,k_0)+1\big)+z\big(m^{(t)}_-(z,k_0)-1\big)}, \quad
z\in\C\backslash\dD, \lb{2.40}
\\
M^{(t)}_-(0,k_0) &=\f{\alpha_{k_0}+e^{it}}{\alpha_{k_0}-e^{it}},
\lb{2.41}
\\
m^{(t)}_-(z,k_0) &=
\f{\Re(a_{k_0+1})-i\Im(a_{k_0+1})M^{(t)}_-(z,k_0+1)}
{\Re(b_{k_0+1})M^{(t)}_-(z,k_0+1) - i\Im(b_{k_0+1})} \no
\\
&= \f{z\big(M^{(t)}_-(z,k_0)+1\big)-\big(M^{(t)}_-(z,k_0)-1\big)}
{z\big(M^{(t)}_-(z,k_0)+1\big)+\big(M^{(t)}_-(z,k_0)-1\big)}, \quad z\in\CdD,
\lb{2.42}
\end{align}
where $a_k=1+e^{-it}\al_k$ and $b_k=1-e^{-it}\al_k$, $k\in\Z$. In
particular, one infers that $M^{(t)}_\pm$ are analytic at $z=0$.

Next, we introduce the Schur (resp. anti-Schur) functions
$\Phi^{(t)}_\pm(\cdot,k)$, $k\in\Z$, by
\begin{align}
\Phi^{(t)}_\pm(z,k) = \f{M^{(t)}_\pm(z,k)-1}{M^{(t)}_\pm(z,k)+1},
\quad z\in\C\backslash\dD. \lb{2.44}
\end{align}
Then by \eqref{2.42} and \eqref{2.44},
\begin{align}
&M^{(t)}_\pm(z,k) = \f{1+\Phi^{(t)}_\pm(z,k)}{1-\Phi^{(t)}_\pm(z,k)},
\quad m^{(t)}_-(z,k) = \f{z-\Phi^{(t)}_-(z,k)}{z+\Phi^{(t)}_-(z,k)},
\quad z\in\CdD. \lb{2.45}
\end{align}
Moreover, it follows from \eqref{2.22}, \eqref{2.44}, and Theorem
\ref{t2.5} that
\begin{align}
&\Phi^{(t)}_\pm(z,k) = \begin{cases}
ze^{it}\frac{v^{(t)}_\pm(z,k,k_0)}{u^{(t)}_\pm(z,k,k_0)}, &\text{$k$
odd,}
\\
e^{it}\frac{u^{(t)}_\pm(z,k,k_0)}{v^{(t)}_\pm(z,k,k_0)}, & \text{$k$
even,}
\end{cases} \quad k\in\Z, \; z\in\CdD, \lb{2.46}
\end{align}
where $u^{(t)}_\pm(\cdot,k,k_0)$ and $v^{(t)}_\pm(\cdot,k,k_0)$ are
the sequences defined in \eqref{2.37}. Since the Weyl--Titchmarsh
solution $\Big(\begin{smallmatrix}u^{(t)}_\pm(z,k,k_0)\\
v^{(t)}_\pm(z,k,k_0)\end{smallmatrix}\Big)_{k\in\Z}$ is unique up to
a multiplicative constant, we conclude from \eqref{2.46} that
$e^{-it}\Phi^{(t)}_\pm(\cdot,k)$ is actually $t$-independent. Thus,
fixing $t_1, t_2\in[0,2\pi)$, one computes
\begin{align}
\Phi^{(t_2)}_\pm(\cdot,k) &= e^{i(t_2-t_1)}\Phi^{(t_1)}_\pm(\cdot,k),
\\
M^{(t_2)}_\pm(\cdot,k) &=
\f{i\Im(e^{i(t_2-t_1)/2})+\Re(e^{i(t_2-t_1)/2})M^{(t_1)}_\pm(\cdot,k)}
{\Re(e^{i(t_2-t_1)/2})+i\Im(e^{i(t_2-t_1)/2})M^{(t_1)}_\pm(\cdot,k)},
\quad k\in\Z.
\end{align}

Finally, following \cite{GZ06}, we obtain the following identities,
\begin{align}
r^{(t)}_+(z,k,k_0) &= z^{k_0 \, ({\rm mod}\,2)}\ol{p^{(t)}_+(1/\ol{z},k,k_0)},
\\
s^{(t)}_+(z,k,k_0) &= -z^{k_0 \, ({\rm mod}\,2)}\ol{q^{(t)}_+(1/\ol{z},k,k_0)},
\\
r^{(t)}_-(z,k,k_0) &= -z^{(k_0 +1) \, ({\rm mod}\,2)}\ol{p^{(t)}_-(1/\ol{z},k,k_0)},
\\
s^{(t)}_-(z,k,k_0) &= z^{(k_0 +1) \, ({\rm mod}\,2)}\ol{q^{(t)}_-(1/\ol{z},k,k_0)},
\end{align}
and provide formulas for the resolvents of the half-lattice CMV operators
$U^{(t)}_{\pm,k_0}$ and the full-lattice CMV operator $U$. In the
special case $t=0$ these formulas were obtained in (2.63)--(2.66),
(2.171), (2.172), and (3.7) of \cite{GZ06}.

\begin{lemma} \lb{l2.6}
Let $t\in[0,2\pi)$, $k_0\in\Z$, and $z\in\C\bs(\dD\cup\{0\})$. Then
the resolvent $\big(U^{(t)}_{\pm,k_0}-zI\big)^{-1}$ is given in terms of its
matrix representation in the standard basis of
$\lt{[k_0,\pm\infty)\cap\Z}$ by
\begin{align}
& \big(U^{(t)}_{+,k_0}-zI\big)^{-1}(k,k') = \f{z^{- k_0 \, ({\rm mod}\,2)}}{2z} \no \\
& \hspace*{1cm} \times
\begin{cases}
p^{(t)}_+(z,k,k_0)\hatt v^{(t)}_+(z,k',k_0), & \text{$k<k'$ and $k=k'$ odd,} \\
\hatt u^{(t)}_+(z,k,k_0)r^{(t)}_+(z,k',k_0), & \text{$k>k'$ and $k=k'$ even}
\end{cases} \lb{2.85}
\\
& \quad = \f{1}{2z}
\begin{cases}
-p^{(t)}_+(z,k,k_0)\ol{\hatt u^{(t)}_+(1/\ol{z},k',k_0)}, & \text{$k<k'$ and $k=k'$ odd,} \\
\hatt u^{(t)}_+(z,k,k_0)\ol{p^{(t)}_+(1/\ol{z},k',k_0)}, & \text{$k>k'$ and $k=k'$ even,}
\end{cases} \lb{2.85a}
\\
&\hspace*{8.1cm} k,k'\in [k_0,\infty)\cap\bbZ, \no
\\
& \big(U^{(t)}_{-,k_0}-zI\big)^{-1}(k,k') = \f{z^{-(k_0 +1) \, ({\rm mod}\,2)}}{2z} \no \\
& \hspace*{1cm} \times
\begin{cases}
\hatt u^{(t)}_-(z,k,k_0)r^{(t)}_-(z,k',k_0), & \text{$k<k'$ and $k=k'$ odd,}\\
p^{(t)}_-(z,k,k_0)\hatt v^{(t)}_-(z,k',k_0), & \text{$k>k'$ and $k=k'$ even}
\end{cases} \lb{2.86}
\\
& \quad = \f{1}{2z}
\begin{cases}
-\hatt u^{(t)}_-(z,k,k_0)\ol{p^{(t)}_-(1/\ol{z},k',k_0)}, & \text{$k<k'$ and $k=k'$ odd,}\\
p^{(t)}_-(z,k,k_0)\ol{\hatt u^{(t)}_-(1/\ol{z},k',k_0)}, & \text{$k>k'$ and $k=k'$ even,}
\end{cases} \lb{2.86a}
\\
&\hspace*{7.8cm} k,k'\in (-\infty,k_0]\cap\bbZ, \no
\end{align}
where the sequences $\hatt u^{(t)}_\pm$ and $\hatt v^{(t)}_\pm$ are defined by
\begin{align}
&\binom{\hatt u^{(t)}_\pm(z,\cdot,k_0)}{\hatt v^{(t)}_\pm(z,\cdot,k_0)} =
\binom{q^{(t)}_-(z,\cdot,k_0)}{s^{(t)}_-(z,\cdot,k_0)} + m^{(t)}_\pm(z,k_0)
\binom{p^{(t)}_-(z,\cdot,k_0)}{r^{(t)}_-(z,\cdot,k_0)} \in
\ell^2([k_0,\pm\infty)\cap\Z)^2, \no \\
& \hspace*{9.8cm} z\in\bbC\backslash(\dD\cup\{0\}).
\end{align}
The corresponding result for the full-lattice resolvent of $U$ then reads
\begin{align}
& (U-zI)^{-1}(k,k') =
\frac{-z^{-k_0 \, ({\rm mod}\,2)}}{2z[M^{(t)}_+(z,k_0)-M^{(t)}_-(z,k_0)]} \no \\
& \hspace*{1cm} \times
\begin{cases}
u^{(t)}_-(z,k,k_0)v^{(t)}_+(z,k',k_0), & \text{$k<k'$ and $k=k'$ odd,} \\
u^{(t)}_+(z,k,k_0)v^{(t)}_-(z,k',k_0), & \text{$k>k'$ and $k=k'$ even}
\end{cases} \lb{2.87}
\\
& \quad = \frac{
\begin{cases}
u^{(t)}_-(z,k,k_0)\ol{u^{(t)}_+(1/\ol{z},k',k_0)}, & \text{$k<k'$ and $k=k'$ odd,} \\
u^{(t)}_+(z,k,k_0)\ol{u^{(t)}_-(1/\ol{z},k',k_0)}, & \text{$k>k'$ and $k=k'$
even,}
\end{cases}}
{2z[M^{(t)}_+(z,k_0)-M^{(t)}_-(z,k_0)]} \quad k,k' \in\Z. \lb{2.87a}
\end{align}
Moreover, since $\,U^{(t)}_{\pm,k_0}$ and $U$ are unitary and hence zero is in
the resolvent set, \eqref{2.85}--\eqref{2.87a} analytically extend to $z=0$.
\end{lemma}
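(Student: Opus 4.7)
The plan is to reduce everything to the $t=0$ case established in \cite{GZ06} (cf.\ equations (2.63)--(2.66), (2.171), (2.172), and (3.7) there) via the unitary equivalence \eqref{2.17b}, \eqref{2.17c}, and then to handle the analytic extension to $z=0$ by a separate indirect argument. With $\be_k=e^{-it}\al_k$ and the diagonal unitary $A=e^{-it/2}\dodd+e^{it/2}\deven$ one has $U^{(t)}_{\pm,k_0;\al}=A^{*}U_{\pm,k_0;\be}A$, hence
\begin{align*}
\big(U^{(t)}_{\pm,k_0;\al}-zI\big)^{-1}(k,k')
=\ol{A_k}\,\big(U_{\pm,k_0;\be}-zI\big)^{-1}(k,k')\,A_{k'},
\end{align*}
with $A_k\in\{e^{\pm it/2}\}$ determined by the parity of $k$. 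The identity then reduces to checking that the phase $\ol{A_k}A_{k'}$ matches the factors of $\ga=e^{-it/2}$ built into the initial conditions \eqref{2.22}, \eqref{2.23} for $p^{(t)}_\pm,q^{(t)}_\pm,r^{(t)}_\pm,s^{(t)}_\pm$; once this bookkeeping is carried out, \eqref{2.85}, \eqref{2.86} follow immediately from their $t=0$ versions in \cite{GZ06}, and the alternative forms \eqref{2.85a}, \eqref{2.86a} then follow by substituting the reflection identities $r^{(t)}_\pm(z,k,k_0)=\pm z^{k_0\,({\rm mod}\,2)}\ol{p^{(t)}_\pm(1/\ol z,k,k_0)}$ and $s^{(t)}_\pm(z,k,k_0)=\mp z^{k_0\,({\rm mod}\,2)}\ol{q^{(t)}_\pm(1/\ol z,k,k_0)}$ recorded just above the lemma into the expressions for $\hatt v^{(t)}_\pm$.

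The full-lattice formulas \eqref{2.87}, \eqref{2.87a} are handled by the same reduction, but now Theorem \ref{t2.5} is invoked to identify $u^{(t)}_\mp(z,\cdot,k_0)$ as the (up to scalar unique) linearly independent solutions of \eqref{2.11} that are square-summable near $\mp\infty$. The standard Green's function ansatz for a unitary five-diagonal operator on $\Z$ then pairs these two Weyl solutions in the numerator, and the prefactor $2z[M^{(t)}_+(z,k_0)-M^{(t)}_-(z,k_0)]$ in the denominator is recovered by evaluating the Wronskian of $u^{(t)}_+$ and $u^{(t)}_-$ at $k=k_0$ using the initial data \eqref{2.22}, \eqref{2.23} and the defining relation \eqref{2.37}. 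Alternatively, one can pass from the half-lattice resolvents to the full-lattice one via a second-resolvent argument based on the rank-two decoupling \eqref{2.18} of Theorem \ref{t2.3} together with the $m$-function combination in \eqref{2.37}.

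The step I expect to be most delicate is the concluding assertion that \eqref{2.85}--\eqref{2.87a} extend analytically to $z=0$. Rather than tracking cancellations of the apparent $1/z$ prefactors against the (generally singular at $z=0$) Laurent polynomials $p^{(t)}_\pm,r^{(t)}_\pm,\hatt u^{(t)}_\pm$ by hand, I would argue indirectly: the left-hand sides $\big(U^{(t)}_{\pm,k_0}-zI\big)^{-1}(k,k')$ and $(U-zI)^{-1}(k,k')$ are analytic on $\res{U^{(t)}_{\pm,k_0}}$ and $\res{U}$, respectively, and the unitarity of $U^{(t)}_{\pm,k_0}$ and $U$ forces $0\in\res{U^{(t)}_{\pm,k_0}}\cap\res{U}$. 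Since the identities \eqref{2.85}--\eqref{2.87a} are already established on the open set $\C\bs(\dD\cup\{0\})$, analytic continuation through $z=0$ on the left forces the right-hand sides to extend analytically as well. For the full-lattice formulas \eqref{2.87}, \eqref{2.87a} the denominator $[M^{(t)}_+(z,k_0)-M^{(t)}_-(z,k_0)]$ is in fact nonvanishing at $z=0$ by \eqref{2.39}, \eqref{2.41}, which yield $M^{(t)}_+(0,k_0)=1\ne(\al_{k_0}+e^{it})/(\al_{k_0}-e^{it})=M^{(t)}_-(0,k_0)$, so only the explicit $1/z$ factor needs to be cancelled by the numerator.
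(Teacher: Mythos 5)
Your proposal is correct and follows essentially the same route as the paper, which obtains Lemma \ref{l2.6} by citing the $t=0$ formulas (2.63)--(2.66), (2.171), (2.172), and (3.7) of \cite{GZ06} and transporting them to general $t$ via the unitary equivalence \eqref{2.17c}, using the reflection identities displayed just above the lemma to pass from \eqref{2.85}, \eqref{2.86} to \eqref{2.85a}, \eqref{2.86a}, and justifying the extension to $z=0$ exactly as you do, by unitarity placing $0$ in the resolvent set. One small caveat: in your compact $\pm$ shorthand for the reflection identities the exponent in the minus case should be $z^{(k_0+1)\,({\rm mod}\,2)}$ rather than $z^{k_0\,({\rm mod}\,2)}$, as in the displayed identities you cite.
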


\section{CMV operators with matrix-valued coefficients} \lb{s3}
In the remainder of this paper, $\Cm$ denotes the space of $m\times m$ matrices with complex-valued
entries endowed with the operator norm $\norm{\cdot}_{\Cm}$ (we use the standard Euclidean norm in $\bbC^m$). The adjoint of an element $\ga\in\Cm$ is denoted by
$\ga^*$, $I_m$ denotes the identity matrix in $\bbC^m$, and the real and imaginary
 parts of $\ga$ are defined as usual by $\Re(\ga)=(\ga+\ga^*)/2$ and $\Im(\ga) =(\ga-\ga^*)/(2i)$.

\begin{remark} \lb{r3.1}
For simplicity of exposition, we find it convenient to use the
following conventions: We denote by $\s{\Z}$ the vector space of all
$\C$-valued sequences, and by $\sm{\Z}=\s{\Z}\otimes\C^m$ the vector
space of all $\C^m$-valued sequences; that is,
\begin{align} \lb{3.1}
\phi=\{\phi(k)\}_{k\in\Z}=
\begin{pmatrix}
\vdots\\\phi(-1)\\\phi(0)\\\phi(1)\\\vdots
\end{pmatrix}\in\sm{\Z}, \quad
\phi(k)=
\begin{pmatrix}
(\phi(k))_1\\(\phi(k))_2\\\vdots\\(\phi(k))_m
\end{pmatrix}\in\C^m, {k\in\Z}.
\end{align}
Moreover, we introduce $\smn{\Z}=\sm{\Z}\otimes\C^n$, $m,n\in\N$, that is,
$\Phi=(\phi_1,\dots,\phi_n)\in\smn{\Z}$, where $\phi_j\in\sm{\Z}$
for all $j=1,\dots,n$.

We also note that $\smn{\Z}=\s{\Z}\otimes\C^{m\times n}$, $m,n\in\N$;
which is to say that the elements of $\smn{\Z}$ can be
identified with the $\C^{m\times n}$-valued sequences,
\begin{align} \lb{3.2}
\Phi=\{\Phi(k)\}_{k\in\Z}=
\begin{pmatrix}
\vdots\\\Phi(-1)\\\Phi(0)\\\Phi(1)\\\vdots
\end{pmatrix},
 \Phi(k)=
\begin{pmatrix}
(\Phi(k))_{1,1} & \hdots & (\Phi(k))_{1,n}\\
\vdots & & \vdots \\
(\Phi(k))_{m,1} & \hdots & (\Phi(k))_{m,n}
\end{pmatrix}\in\C^{m\times n}, {k\in\Z},
\end{align}
by setting $\Phi=(\phi_1,\dots,\phi_n)$, where
\begin{align} \lb{3.3}
\phi_j=
\begin{pmatrix}
\vdots\\\phi_j(-1)\\\phi_j(0)\\\phi_j(1)\\\vdots
\end{pmatrix}\in\sm{\Z}, \quad \phi_j(k)=
\begin{pmatrix}
(\Phi(k))_{1,j}\\\vdots\\(\Phi(k))_{m,j}
\end{pmatrix}\in\C^m, \; j=1,\dots,n,\; k\in\Z.
\end{align}

For the elements of $\smn{\Z}$ we define the right-multiplication by
$n\times n$ matrices with complex-valued entries by
\begin{align} \lb{3.4}
\Phi C = (\phi_1,\dots,\phi_n)
\begin{pmatrix}c_{1,1} & \dots & c_{1,n}\\
\vdots&&\vdots\\c_{n,1} & \dots & c_{n,n}\end{pmatrix} =
\left(\sum_{j=1}^n \phi_j c_{j,1},\dots,\sum_{j=1}^n \phi_j
c_{j,n}\right)\in\smn{\Z}
\end{align}
for all $\Phi\in\smn{\Z}$ and $C\in\C^{n\times n}$. In addition, for
any linear transformation $\bbA: \sm{\Z}\to\sm{\Z}$,  we define
$\bbA\Phi$ for all $\Phi=(\phi_1,\dots,\phi_n)\in\smn{\Z}$ by
\begin{align} \lb{3.5}
\bbA\Phi = (\bbA\phi_1,\dots,\bbA\phi_n)\in\smn{\Z}.
\end{align}

Given the above conventions, we note the subspace
containment: $\ltm{\Z}=\ell^2(\Z)\otimes\C^m\subset\sm{\Z}$ and
$\ltmn{\Z}=\ell^2(\Z)\otimes\C^{m\times n}\subset\smn{\Z}$. We also
note that $\ltm{\Z}$ represents a Hilbert space with scalar
product given by
\begin{align} \lb{3.6}
(\phi,\psi)_{\ltm{\Z}} = \sum_{k=-\infty}^\infty\sum_{j=1}^m
\ol{(\phi(k))_j}(\psi(k))_j, \quad \phi,\psi\in\ltm{\Z}.
\end{align}
Finally, we note that a straightforward modification of the
above definitions also yields the Hilbert space $\ltm{J}$ as well
as the sets $\ltmn{J}$, $\sm{J}$, and $\smn{J}$ for any
$J\subset\Z$.
\end{remark}

We start by introducing our basic assumption:

\begin{hypothesis} \lb{h3.2}
Let $m\in\N$ and assume $\al=\{\al_k\}_{k\in\Z}$ is a sequence of
$m \times m$ matrices with complex entries\footnote{We emphasize that $\al_k\in\Cm$, $k\in\Z$, are general (not necessarily normal) matrices.}
and such that
\begin{equation} \lb{3.7}
\norm{\al_k}_{\Cm} < 1, \quad k\in\Z.
\end{equation}
\end{hypothesis}

Given a sequence $\al$ satisfying \eqref{3.7}, we define two sequences
of positive self-adjoint $m\times m$ matrices $\{\rho_k\}_{k\in\bbZ}$
and $\{\wti\rho_k\}_{k\in\bbZ}$ by
\begin{align}
\rho_k &= [I_m-\al_k^*\al_k]^{1/2}, \quad k\in\bbZ, \lb{3.8}
\\
\wti\rho_k &= [I_m-\al_k\al_k^*]^{1/2}, \quad k\in\bbZ, \lb{3.9}
\end{align}
and two sequences of $m\times m$ matrices with positive real parts,
$\{a_k\}_{k\in\bbZ}\subset \bbC^{m\times m}$ and
$\{b_k\}_{k\in\bbZ}\subset \bbC^{m\times m}$ by
\begin{align}
a_k &= I_m+\al_k, \quad k\in\bbZ, \lb{3.10}
\\
b_k &= I_m-\al_k, \quad k \in \Z. \lb{3.11}
\end{align}
Then \eqref{3.7} implies that $\rho_k$ and $\wti\rho_k$ are
invertible matrices for all $k\in\Z$, and using elementary power series
expansions one verifies the following identities for all $k\in\Z$,
\begin{align}
&\wti\rho_k^{\pm1}\al_k = \al_k\rho_k^{\pm1}, \quad
\al_k^*\wti\rho_k^{\pm1} = \rho_k^{\pm1}\al_k^*, \lb{3.12}
 \quad
a_k^*\wti\rho_k^{-2}a_k = a_k\rho_k^{-2}a_k^*, \\
&b_k^*\wti\rho_k^{-2}b_k = b_k\rho_k^{-2}b_k^*, \quad
a_k^*\wti\rho_k^{-2}b_k + a_k\rho_k^{-2}b_k^* =
b_k^*\wti\rho_k^{-2}a_k + b_k\rho_k^{-2}a_k^* = 2I_m. \lb{3.13}
\end{align}

According to Simon \cite{Si04}, we call $\al_k$ the Verblunsky
coefficients in honor of Verblunsky's pioneering work in the theory
of orthogonal polynomials on the unit circle \cite{Ve35},
\cite{Ve36}.

Next, we introduce a sequence of $2\times 2$ block unitary matrices $\Te_k$
with $m\times m$ matrix coefficients by
\begin{equation} \lb{3.14}
\Te_k = \begin{pmatrix} -\al_k & \wti\rho_k \\ \rho_k & \al_k^*
\end{pmatrix},
\quad k \in \Z,
\end{equation}
and two unitary operators $\V$ and $\W$ on $\ltm{\Z}$ by their
matrix representations in the standard basis of $\ltm{\Z}$ by
\begin{align} \lb{3.15}
\V &= \begin{pmatrix} \ddots & & &
\raisebox{-3mm}[0mm][0mm]{\hspace*{-5mm}\Huge $0$}  \\ & \Te_{2k-2} &
& \\ & & \Te_{2k} & & \\ &
\raisebox{0mm}[0mm][0mm]{\hspace*{-10mm}\Huge $0$} & & \ddots
\end{pmatrix}, \quad
\W = \begin{pmatrix} \ddots & & &
\raisebox{-3mm}[0mm][0mm]{\hspace*{-5mm}\Huge $0$}
\\ & \Te_{2k-1} &  &  \\ &  & \Te_{2k+1} &  & \\ &
\raisebox{0mm}[0mm][0mm]{\hspace*{-10mm}\Huge $0$} & & \ddots
\end{pmatrix},
\end{align}
where
\begin{align}
\begin{pmatrix}
\V_{2k-1,2k-1} & \V_{2k-1,2k} \\ \V_{2k,2k-1}   & \V_{2k,2k}
\end{pmatrix} =  \Te_{2k},
\quad
\begin{pmatrix}
\W_{2k,2k} & \W_{2k,2k+1} \\ \W_{2k+1,2k}  & \W_{2k+1,2k+1}
\end{pmatrix} =  \Te_{2k+1},
\quad k\in\Z. \lb{3.16}
\end{align}
Moreover, we introduce the unitary operator $\U$ on
$\ltm{\Z}$ as the product of the unitary operators $\V$ and $\W$ by
\begin{equation} \lb{3.17}
\U = \V\W,
\end{equation}
or in matrix form in the standard basis of $\ltm{\Z}$, by
\begin{align}
\U &= \begin{pmatrix} \ddots &&\hspace*{-8mm}\ddots
&\hspace*{-10mm}\ddots &\hspace*{-12mm}\ddots &\hspace*{-14mm}\ddots
&&& \raisebox{-3mm}[0mm][0mm]{\hspace*{-6mm}{\Huge $0$}}
\\
&0& -\al_{0}\rho_{-1} & -\al_{0}\al_{-1}^* & -\wti\rho_{0}\al_{1} &
\wti\rho_{0}\wti\rho_{1}
\\
&& \rho_{0}\rho_{-1} &\rho_{0}\al_{-1}^* & -\al_{0}^*\al_{1} &
\al_{0}^*\wti\rho_{1} & 0
\\
&&&0& -\al_{2}\rho_{1} & -\al_{2}\al_{1}^* & -\wti\rho_{2}\al_{3} &
\wti\rho_{2}\wti\rho_{3}
\\
&&\raisebox{-4mm}[0mm][0mm]{\hspace*{-6mm}{\Huge $0$}} &&
\rho_{2}\rho_{1} & \rho_{2}\al_{1}^* & -\al_{2}^*\al_{3} &
\al_{2}^*\wti\rho_{3}&0
\\
&&&&&\hspace*{-14mm}\ddots &\hspace*{-14mm}\ddots
&\hspace*{-14mm}\ddots &\hspace*{-8mm}\ddots &\ddots
\end{pmatrix}. \lb{3.18}
\end{align}
Here terms of the form $-\al_{2k}\al_{2k-1}^*$ and
$-\al_{2k}^*\al_{2k+1}$, $k\in\Z$, represent the diagonal entries
$\U_{2k-1,2k-1}$ and $\U_{2k,2k}$ of the infinite matrix $\U$ in
\eqref{3.18}, respectively.  Then, with $\deven$ and $\dodd$ defined
in \eqref{2.10}, and by analogy with \eqref{2.9}, we see that as an operator acting
upon $\ltm{\Z}$, $\U$ can be represented
by
\begin{align}
\U&= \rho\rho^-\deven S^{--} + [\rho(\al^-)^*\deven -\al^*\rho\ \dodd ]S^-
- \al^*\al^+\deven - \al^+\al^*\dodd \notag\\
& \hspace{4mm}+ [\al^*\wti\rho^{\hspace{2pt}+}\deven - \wti\rho^{\hspace{2pt}+}\al^{++}\dodd ]S^+
+\wti\rho^{\hspace{2pt}+}\wti\rho^{\hspace{2pt}++}\dodd S^{++}.
\end{align}
We continue to call the operator $\U$ on
$\ltm{\Z}$ the CMV operator since \eqref{3.14}--\eqref{3.18} in the
context of the scalar-valued semi-infinite (i.e., half-lattice) case
were obtained by Cantero, Moral, and Vel\'azquez in \cite{CMV03} in
2003.  Then, in analogy with Lemma~\ref{l2.2}, the following result is proven in
\cite{CGZ07}:

\begin{lemma} \lb{l3.3}
Let $z\in\bbC\backslash\{0\}$ and $\{U(z,k)\}_{k\in\bbZ},
\{V(z,k)\}_{k\in\bbZ}$ be two $\Cm$-valued sequences. Then the
following items $(i)$--$(iii)$ are equivalent:
\begin{align}
(i) & \quad  (\U U(z,\cdot))(k) = z U(z,k), \quad (\W U(z,\cdot))(k)=z
V(z,k), \quad k\in\Z.
\\
(ii) & \quad (\W U(z,\cdot))(k) = z V(z,k), \quad (\V V(z,\cdot))(k) =
U(z,k), \quad k\in\Z. \lb{3.21}
\\
(iii) & \quad \binom{U(z,k)}{V(z,k)} = \T(z,k)
\binom{U(z,k-1)}{V(z,k-1)}, \quad k\in\Z.\lb{3.22}
\end{align}
Here $\U$, $\V$, and $\W$  are understood in the sense of difference
expressions on $\smm{\Z}$ rather than difference operators on
$\ltm{\Z}$ $($cf.\ Remark \ref{r3.1}$)$ and the transfer matrices
$\T(z,k)$, $z\in\Cz$, $k\in\Z$, are defined by
\begin{equation}\lb{3.23}
\T(z,k) = \begin{cases}
\begin{pmatrix}
\wti\rho_{k}^{-1}\al_{k} & z\wti\rho_{k}^{-1} \\
z^{-1}\rho_{k}^{-1} & \rho_{k}^{-1}\al_{k}^*
\end{pmatrix},  & \text{$k$ odd,}
\\[20pt]
\begin{pmatrix}
\rho_{k}^{-1}\al_{k}^* & \rho_{k}^{-1} \\
\wti\rho_{k}^{-1} & \wti\rho_{k}^{-1}\al_{k}
\end{pmatrix}, & \text{$k$ even.}
\end{cases}
\end{equation}
\end{lemma}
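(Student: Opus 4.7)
The plan is to mirror the scalar proof of Lemma \ref{l2.2}, organizing the argument into two steps: first establish $(i)\Leftrightarrow(ii)$ directly from the factorization $\U=\V\W$, then convert $(ii)$ into the transfer matrix recursion $(iii)$ by unpacking the $2\times 2$ block structure of $\V$ and $\W$ coming from \eqref{3.15}--\eqref{3.16}, with the matrix identities \eqref{3.12}--\eqref{3.13} handling all non-commutativity.

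For $(i)\Leftrightarrow(ii)$, observe that both statements share the condition $\W U(z,\cdot)=zV(z,\cdot)$. Assuming $(i)$, I would apply $\V$ to this equation and use $\U=\V\W$ to obtain $zU(z,\cdot)=\U U(z,\cdot)=z\V V(z,\cdot)$; since $z\neq 0$, this yields $\V V(z,\cdot)=U(z,\cdot)$, which is the remaining part of $(ii)$. The converse is identical. This step uses no coefficient arithmetic at all.

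For $(ii)\Leftrightarrow(iii)$, the key observation is that by \eqref{3.15}--\eqref{3.16}, each of $\V$ and $\W$ couples only two consecutive indices via a single block $\Te_k$ from \eqref{3.14}. For $k$ odd, the relation $\W U=zV$ restricted to rows $k-1$ and $k$ gives the pair $zV(z,k-1)=-\al_k U(z,k-1)+\wti\rho_k U(z,k)$ and $zV(z,k)=\rho_k U(z,k-1)+\al_k^* U(z,k)$. Solving the first for $U(z,k)$ via the invertibility of $\wti\rho_k$ and substituting into the second, then simplifying using the commutation $\al_k^*\wti\rho_k^{-1}=\rho_k^{-1}\al_k^*$ from \eqref{3.12} together with the identity $\rho_k+\al_k^*\wti\rho_k^{-1}\al_k=\rho_k^{-1}$ (which reduces, after left multiplication by $\rho_k$, to $\rho_k^2+\al_k^*\al_k=I_m$), exactly reproduces the odd-$k$ transfer matrix in \eqref{3.23}. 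The even-$k$ case is entirely parallel, with $\V V=U$ in place of $\W U=zV$, and the identity $\rho_k\wti\rho_k^{-1}\al_k+\al_k^*=\rho_k^{-1}\al_k^*$ playing the analogous simplifying role. The converse $(iii)\Rightarrow(ii)$ follows by reversing these manipulations, since $\T(z,k)$ is invertible for $z\in\bbC\backslash\{0\}$.

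The main obstacle will be the non-commutativity: in the scalar case the passage from coupled equations to transfer matrix entries is essentially by inspection, but in the matrix-valued setting each simplification must be justified by one of the commutation or quadratic relations in \eqref{3.12}--\eqref{3.13}, and the ordering of $\al_k$, $\rho_k$, and $\wti\rho_k$ must be tracked carefully at every step. Beyond this bookkeeping, the argument is a routine block-matrix computation.
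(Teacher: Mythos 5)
Your two-step outline --- $(i)\Leftrightarrow(ii)$ via the factorization $\U=\V\W$ and division by $z\neq 0$, then $(ii)\Leftrightarrow(iii)$ by reading off consecutive rows of the single nontrivial block of $\W$ (for $k$ odd) and of $\V$ (for $k$ even) --- is exactly the argument the paper defers to \cite{CGZ07}, in parallel with the scalar Lemma \ref{l2.2}, and your odd-$k$ computation is correct as written. The one concrete error is the identity you invoke for the even case: $\rho_k\wti\rho_k^{-1}\al_k+\al_k^*=\rho_k^{-1}\al_k^*$ is false already for scalars (take $\al_k=\tfrac12$, $\rho_k=\tfrac{\sqrt3}{2}$: the left side is $1$, the right side is $1/\sqrt3$). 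In fact no new identity is needed there. Solving the row-$(k-1)$ equation of $\V V=U$, namely $U(z,k-1)=-\al_k V(z,k-1)+\wti\rho_k V(z,k)$, for $V(z,k)$ and substituting into the row-$k$ equation $U(z,k)=\rho_k V(z,k-1)+\al_k^*V(z,k)$, the coefficient of $U(z,k-1)$ is $\al_k^*\wti\rho_k^{-1}=\rho_k^{-1}\al_k^*$ and the coefficient of $V(z,k-1)$ is $\rho_k+\al_k^*\wti\rho_k^{-1}\al_k=\rho_k^{-1}\big(\rho_k^2+\al_k^*\al_k\big)=\rho_k^{-1}$ --- precisely the same commutation relation from \eqref{3.12} and quadratic identity $\rho_k^2+\al_k^*\al_k=I_m$ that you used in the odd case, reproducing the even-$k$ matrix in \eqref{3.23}. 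With that correction (and the invertibility of $\T(z,k)$ for the converse direction, which you already note), the proof is complete and coincides with the cited argument.
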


\begin{definition}\lb{d3.4}
If for some reference point, $k_0\in\Z$,  we  modify Hypothesis~\ref{h3.2} by allowing $\al_{k_0}=\ga$, where $\ga\in\C^{m\times m}$ is unitary, then the resulting operators defined in \eqref{3.15}--\eqref{3.17} will be denoted by
$\V_{k_0}^{(\ga)}, \ \W_{k_0}^{(\ga)}$ and $\U_{k_0}^{(\ga)}$.
\end{definition}

\begin{remark}
Strictly, speaking,  allowing  $\al_{k_0}$ to be unitary for some reference point $k_0\in\Z$ contradicts our basic Hypothesis \ref{h3.2}.
However, as long as the exception to Hypothesis \ref{h3.2} refers to
only one site, we will safely ignore this inconsistency in favor of
the notational simplicity it provides by avoiding the introduction of
a properly modified hypothesis on $\al=\{\al_k\}_{k\in\bbZ}$ and will refer to $\U_{k_0}^{(\ga)}$ as a CMV operator.
\end{remark}

The operator $\U_{k_0}^{(\ga)}$ splits into a direct sum of two half-lattice operators $\U_{-,k_0-1}^{(\ga)}$ and $\U_{-,k_0}^{(\ga)}$ acting on $\ltm{(-\infty,k_0-1]\cap\Z}$ and on $\ltm{[k_0,\infty)\cap\Z}$,
respectively.  Explicitly, one obtains
\begin{align}
\U_{k_0}^{(\ga)} = \U_{-,k_0-1}^{(\ga)} \oplus \U_{+,k_0}^{(\ga)} \, \text{ in } \,
\ltm{(-\infty,k_0-1]\cap\Z} \oplus \ltm{[k_0,\infty)\cap\Z}.
\end{align}
 Similarly,
one obtains $\W_{-,k_0-1}^{(\ga)}$, $\V_{-,k_0-1}^{(\ga)}$ and $\W_{+,k_0}^{(\ga)}$,
$\V_{+,k_0}^{(\ga)}$ such that
\begin{equation}\lb{3.25}
\V_{k_0}^{(\ga)}=\V_{-,k_0-1}^{(\ga)}\oplus \V_{+,k_0}^{(\ga)},
\quad \W_{k_0}^{(\ga)}=\W_{-,k_0-1}^{(\ga)}\oplus \W_{+,k_0}^{(\ga)},
\end{equation}
and hence
\begin{equation}\lb{3.26}
\U_{\pm,k_0}^{(\ga)} = \V_{\pm,k_0}^{(\ga)} \W_{\pm,k_0}^{(\ga)}, \quad
\U_{k_0}^{(\ga)} = \V_{k_0}^{(\ga)} \W_{k_0}^{(\ga)}.
\end{equation}
For the special case when $\ga=I_m$, we simplify our notation by writing
\begin{align}
&\U_{k_0}=\U_{k_0}^{(\ga= I_m)}=\V_{k_0}^{(\ga= I_m)}\W_{k_0}^{(\ga= I_m)}=
\V_{k_0}\W_{k_0}\lb{3.27}\\
&\U_{\pm,k_0}=\U_{\pm,k_0}^{(\ga= I_m)}=\V_{\pm,k_0}^{(\ga= I_m)}\W_{\pm,k_0}^{(\ga= I_m)}=
\V_{\pm,k_0}\W_{\pm,k_0}
\end{align}

In analogy with the scalar case, when emphasizing dependence of these operators on the sequence $\al=\{\al_k\}_{k\in\Z}$, we write, for example, $\U_{\al}$ and $\U_{\al;k_0}^{(\ga)}$. Also in analogy to the scalar case, let  $\si, \ta\in\Cm$ be unitary, and let $\A$ and $\wti\A$ be the unitary operators defined on $\ltm{\Z}$ by
\begin{equation}\lb{3.29}
\A=\si\dodd + \ta\deven, \quad \wti\A=\ta\dodd + \si\deven,
\end{equation}
Then, for the full-lattice and direct sum of half-lattice CMV operators, we obtain the following analogs of \eqref{2.17b} and \eqref{2.17c}:
\begin{align}
& \A\U_{\be}\A^* = \big[\A\V_{\be}\wti\A^*\big] \big[\wti\A\W_{\be}\A^*\big]
= \V_{\al}\W_{\al} = \U_{\al},\lb{3.30}
\\
& \A\U_{\be;k_0}^{(\ga)}\A^* = \big[\A\V_{\be;k_0}^{(\ga)}\wti\A^*\big]
\big[\wti\A\W_{\be;k_0}^{(\ga)}\A^*\big] =
\V_{\al;k_0}^{(\si\ga\ta^*)}\W_{\al;k_0}^{(\si\ga\ta^*)} =
\U_{\al;k_0}^{(\si\ga\ta^*)}.\lb{3.31}
\end{align}
where $\be=\{\be_k\}_{k\in\Z}$, and $\al_k=\si\be_k\ta^*$.
In particular, when $\si=\ga^{-1/2}$, $\ta=\ga^{1/2}$, we note, by \eqref{3.27} and \eqref{3.31}, that
\begin{equation}\lb{3.32}
\A\U_{\be;k_0}^{(\ga)}\A^*=\V_{\al;k_0}\W_{\al;k_0}=\U_{\al;k_0}.
\end{equation}

The unitary transformations cited in \eqref{3.29}--\eqref{3.31} are relevant to our next result because of the following observation about $n\times n$ complex matrices:
Let $\al\in\Cm$, where we are not assuming that $\al$ is unitary. Then, $\al$ has a (not necessarily unique) polar decomposition, $\al=U|\al|$, where $U, |\al|\in\Cm$, $U$ is unitary, and $|\al|=(\al^*\al)^{1/2}\geq 0$ is nonnegative (cf., e.g.,
\cite[Theorem\ 3.1.9 (c)]{HJ94}). The nonnegative  matrix $|\al|$ can then be diagonalized: $|\al|=U^*_0DU_0$, where $U_0, D\in\Cm$, $U_0$ is unitary, and $D$ is diagonal. Thus, each $\al\in\Cm$ has a (not necessarily unique) factorization of the form
\begin{equation}\lb{3.33}
\al = \si\be\ta,
\end{equation}
where, $\si,\be,\ta\in\Cm$, where $\si$ and $\ta$ are unitary, and where $\be$ is diagonal.

We now present our principal result on
rank~$m$ perturbations:
\begin{theorem}\lb{t3.6}
Given $\U_{\al}$, fix $k_0\in\Z$ and let
$\al_{k_0}=\si_{k_0}\be_{k_0}\ta_{k_0}^*$ be a factorization
for $\al_{k_0}$, as described in \eqref{3.33}, where $\si_{k_0},\
\ta_{k_0}\in\Cm$ are unitary, and where
$\be_{k_0}\in\Cm$ is the diagonal matrix
$\be_{k_0}=\diag[\be_{k_0,1},\dots,\be_{k_0,m}].$  Let
$\ga_j=\si_{k_0}\te_j\ta_{k_0}^*$, $j=1,2$, where
$\te_1=\diag[e^{it_1},\dots,e^{it_m}]$, and
$\te_2=\diag[e^{is_1},\dots,e^{is_m}]$. Let
$\U_{\al;k_0}^{(\ga_1,\ga_2)}$ denote the  unitary operator acting
on $\ltm{\Z}$ and defined by
\begin{equation}
\U_{\al;k_0}^{(\ga_1,\ga_2)}=\U_{\al;-,k_0-1}^{(\ga_1)}\oplus\U_{\al;+,k_0}^{(\ga_2)}.
\end{equation}
Then, for an arbitrary unitary matrix $\ga\in\Cm$, the difference $\U_{\al}-\U_{\al;k_0}^{(\ga)}$ has rank greater than $m$, while the differences    $\U_{\al}-\U_{\al;k_0}^{(\ga_1,\ga_2)}$ and $(\U_{\al}-zI)^{-1} - \big(\U_{\al;k_0}^{(\ga_1,\ga_2)} -zI\big)^{-1}$ are of rank~$m$ if and only if \
$t_j=2\arg [i(\be_{k_0,j}e^{-is_j/2} - e^{is_j/2})]$, $j=1,\dots,m$, and otherwise possess rank greater than $m$.
\end{theorem}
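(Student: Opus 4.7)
The plan is to mirror the proof of Theorem~\ref{t2.3}, reducing the matrix-valued problem to $m$ independent scalar subproblems via a unitary reduction to diagonal form. Following the matrix analog of \eqref{2.20}, I would first write
\begin{equation*}
\U_\al - \U^{(\ga_1,\ga_2)}_{\al;k_0} = \begin{cases}\V_\al\bigl(\W_\al - \W^{(\ga_1,\ga_2)}_{\al;k_0}\bigr), & k_0 \text{ odd},\\ \bigl(\V_\al - \V^{(\ga_1,\ga_2)}_{\al;k_0}\bigr)\W_\al, & k_0 \text{ even},\end{cases}
\end{equation*}
so that, because $\V_\al$ and $\W_\al$ are unitary, the rank of the left-hand side equals the rank of the inner factor. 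This factor is block-diagonal in the standard basis of $\ltm{\Z}$ with a single nontrivial $2m\times 2m$ block at positions $(k_0-1,k_0)$, namely
\begin{equation*}
N := \begin{pmatrix}\ga_1 - \al_{k_0} & \wti\rho_{k_0}\\ \rho_{k_0} & \al_{k_0}^* - \ga_2^*\end{pmatrix}.
\end{equation*}

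Next I would substitute $\al_{k_0}=\si_{k_0}\be_{k_0}\ta_{k_0}^*$ and $\ga_j=\si_{k_0}\te_j\ta_{k_0}^*$. Since $\rho_{k_0}=\ta_{k_0}\rho_\be\ta_{k_0}^*$ and $\wti\rho_{k_0}=\si_{k_0}\rho_\be\si_{k_0}^*$ (with $\rho_\be:=(I_m-\be_{k_0}^*\be_{k_0})^{1/2}$ and $\wti\rho_\be=\rho_\be$ because $\be_{k_0}$ is diagonal), the unitary congruence $N\mapsto \diag(\si_{k_0}^*,\ta_{k_0}^*)\,N\,\diag(\ta_{k_0},\si_{k_0})$ carries $N$ (preserving rank) to
\begin{equation*}
N' = \begin{pmatrix}\te_1-\be_{k_0} & \rho_\be\\ \rho_\be & \be_{k_0}^*-\te_2^*\end{pmatrix}.
\end{equation*}
Because $\rho_\be$ is positive definite (all $|\be_{k_0,j}|<1$), a standard Schur-complement identity yields $\rank N' = m + \rank M$, where $M := \rho_\be + (\te_1-\be_{k_0})\,\rho_\be^{-1}\,(\te_2^*-\be_{k_0}^*)$.

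For the main claim, when $\te_1,\te_2$ are diagonal as stipulated, $M$ is diagonal and a short calculation reduces the vanishing of its $j$-th entry to precisely the scalar determinantal condition analyzed in \eqref{2.22a}, namely $t_j=2\arg[i(\be_{k_0,j}e^{-is_j/2}-e^{is_j/2})]$. Hence $\rank M$ equals $m$ minus the number of indices $j$ for which this holds, so $\rank\bigl(\U_\al-\U^{(\ga_1,\ga_2)}_{\al;k_0}\bigr)=m$ if and only if all $m$ scalar conditions hold simultaneously, and otherwise exceeds $m$. For the first assertion on $\U^{(\ga)}_{\al;k_0}$, the same reduction applies with $\te_1=\te_2=\te:=\si_{k_0}^*\ga\ta_{k_0}$ (unitary but in general not diagonal), and now
\begin{equation*}
M=\rho_\be+(\te-\be_{k_0})\,\rho_\be^{-1}\,(\te-\be_{k_0})^*
\end{equation*}
is the sum of the positive-definite $\rho_\be$ and a positive-semidefinite term, hence positive definite; therefore $\rank M = m$ and $\rank\bigl(\U_\al-\U^{(\ga)}_{\al;k_0}\bigr)=2m>m$ for every unitary $\ga\in\Cm$. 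The resolvent assertion follows from the operator identity
\begin{equation*}
(\U_\al-zI)^{-1}-\bigl(\U^{(\ga_1,\ga_2)}_{\al;k_0}-zI\bigr)^{-1}=-(\U_\al-zI)^{-1}\bigl[\U_\al-\U^{(\ga_1,\ga_2)}_{\al;k_0}\bigr]\bigl(\U^{(\ga_1,\ga_2)}_{\al;k_0}-zI\bigr)^{-1},
\end{equation*}
as both resolvents are bounded and invertible for $z\in\C\setminus\dD$.

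The main obstacle I anticipate is recognizing the positive-definite structure of $M$ in the single-parameter case $\te_1=\te_2$; this is what uniformly forbids a rank-$m$ difference and thereby quantifies the genuine need for two independent diagonal phase families $\te_1,\te_2$ to reach the minimal rank~$m$, in sharp contrast with the Jacobi situation where a single rank-one change suffices.
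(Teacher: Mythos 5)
Your proposal is correct, and its overall skeleton --- factoring out the unitary $\V_\al$ or $\W_\al$, localizing to a single $2m\times 2m$ block, conjugating by $\diag(\si_{k_0},\ta_{k_0})$ to reach $N'=\bigl(\begin{smallmatrix}\te_1-\be_{k_0} & \rho_\be\\ \rho_\be & \be_{k_0}^*-\te_2^*\end{smallmatrix}\bigr)$, invoking the scalar determinant computation \eqref{2.22a}, and finishing the resolvent claim with the second resolvent identity --- coincides with the paper's. Where you genuinely diverge is in the core linear algebra. The paper treats the two rank assertions by two separate arguments: for arbitrary unitary $\ga$ it shows that $\ker(A)$ meets each coordinate subspace trivially and then rules out a graph-type kernel $\{(\xi,M\xi)^\top\}$ via an eigenvector/quadratic-form contradiction, concluding only $\rank(A)>m$; for the two-parameter diagonal case it permutes rows and columns of $B$ into $m$ independent $2\times 2$ blocks $B_j$ and applies the scalar criterion to each. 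Your single Schur-complement identity $\rank(N')=m+\rank\bigl(\rho_\be+(\te_1-\be_{k_0})\rho_\be^{-1}(\te_2^*-\be_{k_0}^*)\bigr)$, obtained by elimination on the invertible off-diagonal pivot $\rho_\be$, handles both cases at once: in the diagonal case the complement is diagonal with $j$-th entry equal to $-\ka_{k_0,j}^{-1}\det(B_j)$, reproducing exactly the paper's condition $t_j=2\arg[i(\be_{k_0,j}e^{-is_j/2}-e^{is_j/2})]$, while in the single-$\ga$ case it becomes $\rho_\be+(\te-\be_{k_0})\rho_\be^{-1}(\te-\be_{k_0})^*>0$, which is not merely nonzero but invertible. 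This buys you a strictly stronger conclusion than the paper records: the rank of $\U_\al-\U^{(\ga)}_{\al;k_0}$ equals $2m$ for every unitary $\ga$, not just ``greater than $m$,'' and the mechanism forcing two independent phase families becomes transparent. A minor simplification you could add: for the single-$\ga$ assertion the factorization \eqref{3.33} is not even needed, since eliminating on $\rho_{k_0}$ directly in $N$ yields the complement $\wti\rho_{k_0}+(\ga-\al_{k_0})\rho_{k_0}^{-1}(\ga-\al_{k_0})^*>0$.
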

\begin{proof}
 $\U^{(\ga)}_{\al;k_0}=\V^{(\ga)}_{\al;k_0}\W^{(\ga)}_{\al;k_0}$ by
\eqref{3.26}, and hence, it follows from \eqref{3.15} that
\begin{align}
\U_{\al}-\U^{(\ga)}_{\al;k_0} =
\begin{cases}
\V_{\al}\big(\W_{\al}-\W^{(\ga)}_{\al;k_0}\big), & \text{$k_0$ odd,} \\
\big(\V_{\al}-\V^{(\ga)}_{\al;k_0}\big)\W_{\al}, & \text{$k_0$ even.}
\end{cases}
\end{align}
For $k_0$ odd, $D=\W_{\al}-\W^{(\ga)}_{\al;k_0}$ is block-diagonal with all
of its $2m\times 2m$ blocks on the diagonal being zero except for one
which has the following form
\begin{equation}
A=
\begin{pmatrix}
D_{k_0-1,k_0-1} & D_{k_0-1,k_0} \\ D_{k_0,k_0-1}  & D_{k_0,k_0}
\end{pmatrix}
=
\begin{pmatrix}
-\al_{k_0}+\ga & \wti\rho_{k_0} \\ \rho_{k_0} & \al_{k_0}^*-\ga^*
\end{pmatrix}
\end{equation}

Note that the following subspaces have only trivial intersection
with $\ker(A)$ since $\rho_{k_0}$ and $\wti\rho_{k_0}$ are
invertible matrices,
\begin{equation}
S_1=\left\{ \begin{pmatrix}\xi\\0 \end{pmatrix}\in\C^{2m}\mid \xi\in\C^{m}\right\},\quad
S_2=\left\{ \begin{pmatrix}0\\ \ea \end{pmatrix}\in\C^{2m}\mid \ea\in\C^{m}\right\}.
\end{equation}
As a consequence, $\rank(A)\ge m$.  Further note, for any $c\in\C$, that
\begin{equation}\lb{3.38}
\begin{pmatrix}\xi \\ c\xi \end{pmatrix} \in \ker(A)
\end{equation}
only when $\xi=0\in\C^m$.  To see this, assume that \eqref{3.38}
holds for some $c\in\C$ and some $\xi\ne 0\in\C^m$. It follows that
\begin{equation}\lb{3.39}
\begin{pmatrix} 0\\0\end{pmatrix}
= \begin{pmatrix} c\xi\\ \xi \end{pmatrix}^*A \begin{pmatrix} \xi\\ c\xi \end{pmatrix}=
\begin{pmatrix}-\bar c \xi^*(\al_{k_0}-\ga)\xi+ |c|^2 \xi^*\wti\rho_{k_0}\xi\\
\xi^*\rho\xi + c\xi^*(\al_{k_0}-\ga)^*\xi \end{pmatrix}
\end{equation}
and hence by conjugation in the first line of \eqref{3.39} that
\begin{align}
0&=-c \xi^*(\al_{k_0}-\ga)^*\xi+ |c|^2 \xi^*\wti\rho_{k_0}\xi,\\
0&=\xi^*\rho\xi + c\xi^*(\al_{k_0}-\ga)^*\xi.
\end{align}
Summing these equations, we see that
$\xi^*(\rho_{k_0}+|c|^2\wti\rho_{k_0})\xi=0$. However, strict
positivity of the self-adjoint matrix
$\rho_{k_0}+|c|^2\wti\rho_{k_0}$ implies that $\xi=0\in\C^m$; a
contradiction.

Noting again that $\rank(A)\ge m$, assume that $\rank(A)=m$. Given that
\begin{equation}
\C^{2m} = S_1\oplus S_2, \quad \ker(A)\cap S_j =\{0\}, j=1,2,
\end{equation}
then there exists a matrix $M\in\Cm$ such that
\begin{equation}
\ker(A) = \left\{\begin{pmatrix}\xi\\M\xi \end{pmatrix} \mid \xi\in\C^m  \right\}.
\end{equation}
However, this implies the existence of some $\xi\ne 0\in\C^m$ and some $c\in\C$ such that \eqref{3.38} holds; thus, again a contradiction. Hence, the rank of $A$, and as a consequence the rank of $\U_{\al}-\U_{\al;k_0}^{(\ga)}$, are greater than $m$. The proof when $k_0\in\Z$ is even is similar to that just given.

Now consider the rank of the difference $\U_{\al}-\U_{\al;k_0}^{(\ga_1,\ga_2)}$, first noting by \eqref{3.25} and \eqref{3.26} that $\U^{(\ga_1,\ga_2)}_{\al;k_0}=\V^{(\ga_1,\ga_2)}_{\al;k_0}\W^{(\ga_1,\ga_2)}_{\al;k_0}$ and hence, by \eqref{3.15} that
\begin{align}
\U_{\al}-\U^{(\ga_1,\ga_2)}_{\al;k_0} =
\begin{cases}
\V_{\al}\big(\W_{\al}-\W^{(\ga_1,\ga_2)}_{\al;k_0}\big), & \text{$k_0$ odd,} \\
\big(\V_{\al}-\V^{(\ga_1,\ga_2)}_{\al;k_0}\big)\W_{\al}, & \text{$k_0$ even.}
\end{cases}
\end{align}
We again proceed by assuming that $k_0\in\Z$ is odd, and letting
$D=\W_{\al}-\W^{(\ga_1,\ga_2)}_{\al;k_0}$ be the block-diagonal matrix all of whose
$2m\times 2m$ blocks on the diagonal are zero except for one
which has the following form:
\begin{align}
\begin{pmatrix}
D_{k_0-1,k_0-1} & D_{k_0-1,k_0} \\ D_{k_0,k_0-1}  & D_{k_0,k_0}
\end{pmatrix}
&=\begin{pmatrix}-\al_{k_0} & \wti\rho_{k_0} \\ \rho_{k_0} & \al_{k_0}^*\end{pmatrix}
-
\begin{pmatrix}-\si_{k_0}\te_1\ta_{k_0}^* & 0 \\ 0 & \ta_{k_0}\te_2^*\si_{k_0}^*\end{pmatrix}\\
&=
\begin{pmatrix}\si_{k_0} & 0 \\ 0 & \ta_{k_0}\end{pmatrix}
\begin{pmatrix}-\be_{k_0}+\te_1 & \ka_{k_0} \\ \ka_{k_0} & \be_{k_0}^*-\te_2^*\end{pmatrix}
\begin{pmatrix}\ta_{k_0}^* & 0 \\ 0 & \si_{k_0}^*\end{pmatrix},
\end{align}
where $\ka_{k_0} = (I_m-\be_{k_0}\be_{k_0}^*)^{1/2} = (I_m-\be_{k_0}^*\be_{k_0})^{1/2}=\diag[\ka_{k_0,1},\dots,\ka_{k_0,m}]$. Then, the rank of the difference
$D=\W_{\al}-\W^{(\ga_1,\ga_2)}_{\al;k_0}$ equals the rank of the matrix
\begin{equation}\lb{3.47}
B=\begin{pmatrix}-\be_{k_0}+\te_1 & \ka_{k_0} \\ \ka_{k_0} & \be_{k_0}^*-\te_2^*\end{pmatrix}.
\end{equation}

Since all $m\times m$ matrices on the right-hand side of
\eqref{3.47} are diagonal, performing an equivalent set of
permutations on the rows and on the columns of the matrix $B$ in
\eqref{3.47} results in a block diagonal matrix with $2\times 2$
blocks arrayed along the diagonal, each of the form,
\begin{equation}
B_j=
\begin{pmatrix}
-\be_{k_0,j}+e^{it_j}& \ka_{k_0,j}\\  \ka_{k_0,j}&\ol\be_{k_0,j}-e^{-is_j}
\end{pmatrix},  \quad j=1,\dots,m.
\end{equation}
It follows that $\rank(B)=m$ precisely when $\rank(B_j) =1$ for all $j=1,\dots,m$, and otherwise, that $\rank(B)>m$. We now observe that each $B_j$ has the form given in \eqref{2.21}. Hence, by the derivation following from \eqref{2.22a} we see that $\det(B_j)=0$, and hence that $\rank(B_j)=1$, precisely when
$
t_j=2\arg [i(\be_{k_0,j}e^{-is_j/2} - e^{is_j/2})].$

As in the scalar case discussed in Theorem~\ref{t2.3}, the statement
in this theorem for the resolvents follows from the result just
proven for the difference $\U_{\al}-\U_{\al;k_0}^{(\ga_1,\ga_2)}$
and from the identity,
\begin{equation}
(\U_{\al}-zI)^{-1} - \big(\U_{\al;k_0}^{(\ga_1,\ga_2)} -zI\big)^{-1}
=
-(\U_{\al}-zI)^{-1}
\big[\U_{\al}-\U^{(\ga_1,\ga_2)}_{\al;k_0}\big] \big(\U^{(\ga_1,\ga_2)}_{\al;k_0}-zI\big)^{-1}.
\end{equation}
\end{proof}

\begin{remark}
In particular, we note that the difference $\big[\U_{\al}-\U_{\al;k_0}^{(\ga)}\big]$ has rank greater than $m$ when $\ga = I_m$.
\end{remark}

Next we present formulas for
different unitary  $\ga\in\Cm$, linking various spectral theoretic objects
associated with half-lattice CMV operators $\U^{(\ga)}_{\pm,k_0}$. For the special case when $\ga=I_m$, these objects, and the relationships described below, have proven exceptionally useful (see, e.g., \cite{CGZ07}, \cite{CGZ08}, \cite{GZ06}, \cite{GZ06a}, and \cite{Zi08}).

We begin with an analog of Lemma
\ref{l3.3} for difference expressions $\U^{(\ga)}_{\pm,k_0}$,
$\V^{(\ga)}_{\pm,k_0}$, and $\W^{(\ga)}_{\pm,k_0}$. For the special case
$\ga=I_m$, the result below is proven in \cite[Lemma 2.4]{CGZ07}. The general case
below follows immediately from the special case and the observation
of unitary equivalence in \eqref{3.32}.

\begin{lemma} \lb{l3.8}
Let $z\in\C\backslash\{0\}$, $k_0\in\Z$, and let $\ga\in\Cm$ be unitary.
Let
$\big\{\hatt P^{(\ga)}_+(z,k,k_0)\big\}_{k\geq k_0}$,
$\big\{\hatt R^{(\ga)}_+(z,k,k_0)\big\}_{k\geq k_0}$ be two $\Cm$-valued sequences. Then the
following items $(i)$--$(iii)$ are equivalent:
\begin{align}
(i) &\quad \big(\U^{(\ga)}_{+,k_0} \hatt P^{(\ga)}_+(z,\cdot,k_0)\big)(k) = z \hatt
P^{(\ga)}_+(z,k,k_0), \no \\
&\quad \big(\W^{(\ga)}_{+,k_0} \hatt P^{(\ga)}_+(z,\cdot,k_0)\big)(k) = z \hatt
R^{(\ga)}_+(z,k,k_0), \quad k\geq k_0.
\\
(ii) &\quad \big(\W^{(\ga)}_{+,k_0} \hatt P^{(\ga)}_+(z,\cdot,k_0)\big)(k) = z
\hatt R^{(\ga)}_+(z,k,k_0), \no \\
&\quad \big(\V^{(\ga)}_{+,k_0} \hatt R^{(\ga)}_+(z,\cdot,k_0)\big)(k) = \hatt
P^{(\ga)}_+(z,k,k_0), \quad k\geq k_0.
\\
(iii) &\quad \binom{\hatt P^{(\ga)}_+(z,k,k_0)}{\hatt R^{(\ga)}_+(z,k,k_0)}
= \T(z,k) \binom{\hatt P^{(\ga)}_+(z,k-1,k_0)}{\hatt
R^{(\ga)}_+(z,k-1,k_0)}, \quad k > k_0, \no \\
&\quad \hatt P^{(\ga)}_+(z,k_0,k_0) =
\begin{cases}
z\ga \hatt R^{(\ga)}_+(z,k_0,k_0), & \text{$k_0$ odd}, \\
\ga^* \hatt R^{(\ga)}_+(z,k_0,k_0), & \text{$k_0$ even}.
\end{cases}
\end{align}
Similarly, let $\big\{\hatt P^{(\ga)}_-(z,k,k_0)\big\}_{k\leq k_0}$,
$\big\{\hatt R^{(\ga)}_-(z,k,k_0)\big\}_{k\leq k_0}$ be two $\Cm$-valued sequences. Then the
following items $(iv)$--$(vi)$are equivalent:
\begin{align}
(iv) &\quad \big(\U^{(\ga)}_{-,k_0} \hatt P^{(\ga)}_-(z,\cdot,k_0)\big)(k) = z
\hatt P^{(\ga)}_-(z,k,k_0), \no \\
&\quad \big(\W^{(\ga)}_{-,k_0} \hatt P^{(\ga)}_-(z,\cdot,k_0)\big)(k) = z \hatt
R^{(\ga)}_-(z,k,k_0), \quad k\leq k_0.
\\
(v) &\quad \big(\W^{(\ga)}_{-,k_0} \hatt P^{(\ga)}_-(z,\cdot,k_0)\big)(k) = z
\hatt R^{(\ga)}_-(z,k,k_0), \no \\
&\quad \big(\V^{(\ga)}_{-,k_0} \hatt R^{(\ga)}_-(z,\cdot,k_0)\big)(k) = \hatt
P^{(\ga)}_-(z,k,k_0), \quad k\leq k_0.
\\
(vi) &\quad \binom{\hatt P^{(\ga)}_-(z,k-1,k_0)}{\hatt
R^{(\ga)}_-(z,k-1,k_0)}=\T(z,k)^{-1} \binom{\hatt
P^{(\ga)}_-(z,k,k_0)}{\hatt R^{(\ga)}_-(z,k,k_0)}, \quad k \leq k_0,\no
\\
&\quad \hatt P^{(\ga)}_-(z,k_0,k_0) =
\begin{cases}
-\ga\hatt R^{(\ga)}_-(z,k_0,k_0), & \text{$k_0$ odd,}
\\
-z\ga^* \hatt R^{(\ga)}_-(z,k_0,k_0), & \text{$k_0$ even.}
\end{cases}
\end{align}
\end{lemma}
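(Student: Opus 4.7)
The plan is to reduce the general unitary $\ga$ case to the base case $\ga=I_m$ proven in \cite[Lemma 2.4]{CGZ07}, via the unitary equivalence recorded in \eqref{3.31}-\eqref{3.32}. Fix a unitary square root $\ga^{1/2}$ of $\ga$, set $\sigma=\ga^{1/2}$, $\tau=\ga^{-1/2}$ (so $\sigma\tau^*=\ga$), and form the diagonal multiplication operators $\A=\sigma\dodd+\tau\deven$ and $\wti\A=\tau\dodd+\sigma\deven$ on $\ltm{\Z}$. Define the auxiliary Verblunsky sequence $\be=\{\be_k\}_{k\in\Z}$ by $\be_k=\ga^{-1/2}\al_k\ga^{-1/2}$, which still satisfies Hypothesis~\ref{h3.2}. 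Since $\A$ and $\wti\A$ act site-wise, the direct-sum decomposition \eqref{3.25}-\eqref{3.26} yields the half-lattice analogs of \eqref{3.31}-\eqref{3.32}:
\[
\A\,\V_{\pm,k_0;\be}\,\wti\A^* = \V^{(\ga)}_{\pm,k_0;\al},\quad \wti\A\,\W_{\pm,k_0;\be}\,\A^* = \W^{(\ga)}_{\pm,k_0;\al},\quad \A\,\U_{\pm,k_0;\be}\,\A^* = \U^{(\ga)}_{\pm,k_0;\al}.
\]

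Given sequences $\hatt P_\pm^{(\ga)}, \hatt R_\pm^{(\ga)}$ as in the statement, define transformed sequences $\ti P_\pm=\A^*\hatt P_\pm^{(\ga)}$ and $\ti R_\pm=\wti\A^*\hatt R_\pm^{(\ga)}$ (pointwise). Applying $\A^*$ (resp.\ $\wti\A^*$) to the eigenfunction equations in items (i), (ii) (resp.\ (iv), (v)) and using the three intertwinings converts them into the corresponding statements for $(\ti P_\pm,\ti R_\pm)$ with respect to the $I_m$-boundary operators $\U_{\pm,k_0;\be}, \W_{\pm,k_0;\be}, \V_{\pm,k_0;\be}$. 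By \cite[Lemma 2.4]{CGZ07}, these transformed items are mutually equivalent and each equivalent to the transfer-matrix/boundary formulation (iii) (resp.\ (vi)) for $(\ti P_\pm,\ti R_\pm)$ in terms of the $\be$-transfer matrices $\T_\be(z,k)$.

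To close the argument, I would verify that the transformed (iii) using $\T_\be(z,k)$ on $(\ti P_+,\ti R_+)$ coincides with the original (iii) using $\T_\al(z,k)$ on $(\hatt P_+^{(\ga)},\hatt R_+^{(\ga)})$. From $\al_k=\sigma\be_k\tau^*$ and unitarity of $\sigma,\tau$, one obtains $\al_k^*\al_k=\tau\be_k^*\be_k\tau^*$ and $\al_k\al_k^*=\sigma\be_k\be_k^*\sigma^*$, whence $\rho_k^\al=\tau\rho_k^\be\tau^*$ and $\wti\rho_k^\al=\sigma\wti\rho_k^\be\sigma^*$. A short case-by-case calculation (for $k$ odd and $k$ even separately) then gives the $2\times 2$ block identity
\[
\T_\al(z,k) = \diag(A_k,\wti{A}_k)\,\T_\be(z,k)\,\diag(A_{k-1}^*,\wti{A}_{k-1}^*),
\]
which, substituted into the transfer recurrence, makes the two recurrences equivalent under the pointwise conjugation. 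The boundary condition at $k_0$ transports correctly: for $k_0$ odd, $A_{k_0}^*=\ga^{-1/2}$ and $\wti{A}_{k_0}^*=\ga^{1/2}$, hence
\[
\ti P_+(k_0) = \ga^{-1/2}\bigl(z\ga\,\hatt R_+^{(\ga)}(k_0)\bigr) = z\ga^{1/2}\hatt R_+^{(\ga)}(k_0) = z\,\ti R_+(k_0),
\]
matching the $I_m$-boundary on the $\be$-side; the parity-even case in (iii) and the sign-reversed ``$-$'' boundary $\hatt P_-^{(\ga)}(k_0)=-\ga\,\hatt R_-^{(\ga)}(k_0)$ in (vi) are handled in exactly the same way, using the inverse transfer matrix.

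The main obstacle is purely bookkeeping: correctly tracking $\A$ versus $\wti\A$ across the parity of $k_0$ and the $\pm$ half-lattices, and verifying the block conjugation of transfer matrices together with all four boundary-condition transforms consistently. Each check is a short, direct calculation, but one must match the $\ga^{\pm 1/2}$ factors carefully at each site in order to avoid ordering errors.
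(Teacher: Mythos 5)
Your proposal is correct and follows essentially the same route as the paper, which disposes of the general case by citing \cite[Lemma 2.4]{CGZ07} for $\ga=I_m$ together with the unitary equivalence \eqref{3.32}; you have merely written out the site-wise conjugation, the transfer-matrix identity, and the boundary-condition transport that the paper leaves implicit. The bookkeeping you describe (including the choice $\si=\ga^{1/2}$, $\ta=\ga^{-1/2}$, which is the reverse reading of \eqref{3.32}) checks out.
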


Next, we denote by
$\Big(\begin{smallmatrix}P^{(\ga)}_\pm(z,k,k_0) \\
R^{(\ga)}_\pm(z,k,k_0)\end{smallmatrix}\Big)_{k\in\Z}$ and
$\Big(\begin{smallmatrix} Q^{(\ga)}_\pm(z,k,k_0) \\ S^{(\ga)}_\pm(z,k,k_0)
\end{smallmatrix}\Big)_{k\in\Z}$,
$z\in\bbC\backslash\{0\}$, four linearly independent solutions of
\eqref{3.21} satisfying the following initial conditions:
\begin{align}
&\binom{P^{(\ga)}_+(z,k_0,k_0)}{R^{(\ga)}_+(z,k_0,k_0)} =
\begin{cases}
\binom{z\ga^{1/2}}{\ga^{-1/2}}, & \text{$k_0$ odd,} \\[1mm]
\binom{\ga^{-1/2}}{\ga^{1/2}}, & \text{$k_0$ even,}
\end{cases} \; \; \,
\binom{Q^{(\ga)}_+(z,k_0,k_0)}{S^{(\ga)}_+(z,k_0,k_0)} =
\begin{cases}
\binom{z\ga^{1/2}}{-\ga^{-1/2}}, & \text{$k_0$ odd,} \\[1mm]
\binom{-\ga^{-1/2}}{\ga^{1/2}}, & \text{$k_0$ even.}
\end{cases} \lb{3.56}
\\
&\binom{P^{(\ga)}_-(z,k_0,k_0)}{R^{(\ga)}_-(z,k_0,k_0)} =
\begin{cases}
\binom{\ga^{1/2}}{-\ga^{-1/2}}, & \text{$k_0$ odd,} \\[1mm]
\binom{-z\ga^{-1/2}}{\ga^{1/2}}, & \text{$k_0$ even,}
\end{cases}
\binom{Q^{(\ga)}_-(z,k_0,k_0)}{S^{(\ga)}_-(z,k_0,k_0)} =
\begin{cases}
\binom{\ga^{1/2}}{\ga^{-1/2}}, & \text{$k_0$ odd, }\\[1mm]
\binom{z\ga^{-1/2}}{\ga^{1/2}}, & \text{$k_0$ even.}
\end{cases} \lb{3.57}
\end{align}
Then, $P^{(\ga)}_\pm(z,k,k_0)$, $Q{(\ga)}_\pm(z,k,k_0)$,
$R{(\ga)}_\pm(z,k,k_0)$, and $S{(\ga)}_\pm(z,k,k_0)$, $k,k_0\in\Z$, are
$\Cm$-valued Laurent polynomials in $z$.

\begin{lemma} \lb{l3.9}
Let $z\in\C\backslash\{0\}$, $k_0\in\Z$, and let $\ga_j\in\Cm$, $j=1,2$, be unitary.
With
$C_1=\frac12(\ga_1^{-1/2}\ga_2^{1/2} + \ga_1^{1/2}\ga_2^{-1/2})$ and 
$D_1=\frac12(\ga_1^{-1/2}\ga_2^{1/2} - \ga_1^{1/2}\ga_2^{-1/2})$, 
then,
\begin{align}
&\binom{Q^{(\ga_2)}_\pm(z,\cdot,k_0)}{S^{(\ga_2)}_\pm(z,\cdot,k_0)} =
\binom{Q^{(\ga_1)}_\pm(z,\cdot,k_0)}{S^{(\ga_1)}_\pm(z,\cdot,k_0)}C_1 +
\binom{P^{(\ga_1)}_\pm(z,\cdot,k_0)}{R^{(\ga_1)}_\pm(z,\cdot,k_0)}D_1\lb{3.58}
\\
&\binom{P^{(\ga_2)}_\pm(z,\cdot,k_0)}{R^{(\ga_2)}_\pm(z,\cdot,k_0)} =
\binom{Q^{(\ga_1)}_\pm(z,\cdot,k_0)}{S^{(\ga_1)}_\pm(z,\cdot,k_0)}D_1 +
\binom{P^{(\ga_1)}_\pm(z,\cdot,k_0)}{R^{(\ga_1)}_\pm(z,\cdot,k_0)}C_1.
\end{align}
With
$C_2=(2z^{k_0 \, ({\rm mod}\,2)})^{-1}(\ga_1^{-1/2}\ga_2^{1/2} - z\ga_1^{1/2}\ga_2^{-1/2})$, 
$D_2=(2z^{k_0 \, ({\rm mod}\,2)})^{-1}(\ga_1^{-1/2}\ga_2^{1/2} + z\ga_1^{1/2}\ga_2^{-1/2}),$ then
\begin{align}
&\binom{Q^{(\ga_2)}_-(z,\cdot,k_0)}{S^{(\ga_2)}_-(z,\cdot,k_0)} =
\binom{Q^{(\ga_1)}_+(z,\cdot,k_0)}{S^{(\ga_1)}_+(z,\cdot,k_0)}C_2 +
\binom{P^{(\ga_1)}_+(z,\cdot,k_0)}{R^{(\ga_1)}_+(z,\cdot,k_0)}D_2,
\\
&\binom{P^{(\ga_2)}_-(z,\cdot,k_0)}{R^{(\ga_2)}_-(z,\cdot,k_0)} =
\binom{Q^{(\ga_1)}_+(z,\cdot,k_0)}{S^{(\ga_1)}_+(z,\cdot,k_0)}D_2 +
\binom{P^{(\ga_1)}_+(z,\cdot,k_0)}{R^{(\ga_1)}_+(z,\cdot,k_0)}C_2.\lb{3.61}
\end{align}
With $C_3=\frac12(\ga_1^{-1/2}\wti\rho_{k_0}^{-1}\al_{k_0}\ga_2^{-1/2}-
\ga_1^{1/2}\rho_{k_0}^{-1}\al_{k_0}^*\ga_2^{1/2}) +
\frac12(\ga_1^{-1/2}\wti\rho_{k_0}^{-1}\ga_2^{1/2}-
\ga_1^{1/2}\rho_{k_0}^{-1}\ga_2^{-1/2})$ and 
$D_3=\frac12(\ga_1^{-1/2}\wti\rho_{k_0}^{-1}\al_{k_0}\ga_2^{-1/2}+
\ga_1^{1/2}\rho_{k_0}^{-1}\al_{k_0}^*\ga_2^{1/2}) +
\frac12(\ga_1^{-1/2}\wti\rho_{k_0}^{-1}\ga_2^{1/2}+
\ga_1^{1/2}\rho_{k_0}^{-1}\ga_2^{-1/2}),$
then
\begin{equation}
\binom{Q^{(\ga_2)}_-(z,\cdot,k_0-1)}{S^{(\ga_2)}_-(z,\cdot,k_0-1)} =
\binom{Q^{(\ga_1)}_+(z,\cdot,k_0)}{S^{(\ga_1)}_+(z,\cdot,k_0)}C_3+
\binom{P^{(\ga_1)}_+(z,\cdot,k_0)}{R^{(\ga_1)}_+(z,\cdot,k_0)}D_3.\lb{3.62}
\end{equation}
With $C_4 =-\frac12(\ga_1^{-1/2}\wti\rho_{k_0}^{-1}\al_{k_0}\ga_2^{-1/2}+
\ga_1^{1/2}\rho_{k_0}^{-1}\al_{k_0}^*\ga_2^{1/2}) +
\frac12(\ga_1^{-1/2}\wti\rho_{k_0}^{-1}\ga_2^{1/2}+
\ga_1^{1/2}\rho_{k_0}^{-1}\ga_2^{-1/2})$ and 
$D_4=-\frac12(\ga_1^{-1/2}\wti\rho_{k_0}^{-1}\al_{k_0}\ga_2^{-1/2}-
\ga_1^{1/2}\rho_{k_0}^{-1}\al_{k_0}^*\ga_2^{1/2}) +
\frac12(\ga_1^{-1/2}\wti\rho_{k_0}^{-1}\ga_2^{1/2}-
\ga_1^{1/2}\rho_{k_0}^{-1}\ga_2^{-1/2}),$
\begin{equation}
\binom{P^{(\ga_2)}_-(z,\cdot,k_0-1)}{R^{(\ga_2)}_-(z,\cdot,k_0-1)} =
\binom{Q^{(\ga_1)}_+(z,\cdot,k_0)}{S^{(\ga_1)}_+(z,\cdot,k_0)}C_4 +
\binom{P^{(\ga_1)}_+(z,\cdot,k_0)}{R^{(\ga_1)}_+(z,\cdot,k_0)}D_4.\lb{3.63}
\end{equation}
\end{lemma}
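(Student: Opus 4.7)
The plan is to mirror the scalar argument of Lemma~\ref{l2.4}. The first observation is that if $\binom{U(z,\cdot)}{V(z,\cdot)}$ is any $\bbC^{m\times m}$-valued solution of the recursion \eqref{3.22} and $C\in\bbC^{m\times m}$ is a constant matrix, then $\binom{U(z,\cdot)C}{V(z,\cdot)C}$ is again a solution, since $\T(z,k)$ acts from the left and matrix multiplication is associative. Consequently both sides of each of \eqref{3.58}--\eqref{3.63} satisfy \eqref{3.22}, so by the uniqueness of solutions given their value at one point (Lemma~\ref{l3.3}\,(iii)) it will suffice to verify each identity at a single value of $k$.

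For \eqref{3.58}--\eqref{3.61} I would check the identity at $k=k_0$, where both sides are explicit from \eqref{3.56}, \eqref{3.57}. As a representative case, the $+$-version of \eqref{3.58} with $k_0$ odd reduces to the two algebraic identities $\ga_1^{1/2}(C_1+D_1)=\ga_2^{1/2}$ and $\ga_1^{-1/2}(C_1-D_1)=\ga_2^{-1/2}$, both of which are immediate from the definitions of $C_1,D_1$. The remaining parity and sign cases are analogous; the factor $z^{k_0\,({\rm mod}\,2)}$ in the denominator of $C_2,D_2$ is exactly what is needed to reconcile the different placement of $z$ in the $+$ versus $-$ initial conditions.

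For \eqref{3.62} and \eqref{3.63}, which relate the $\ga_2$-``$-$'' solution at $k_0-1$ with the $\ga_1$-``$+$'' solution at $k_0$, I would first transport the ``$-$'' initial data one step back by applying $\T(z,k_0)^{-1}$, with $\T(z,k_0)$ read off from \eqref{3.23}, to the data in \eqref{3.57}. The commutation identities \eqref{3.12} together with the invertibility of $\rho_{k_0}$ and $\wti\rho_{k_0}$ then permit one to rearrange the resulting expressions in $\rho_{k_0}^{-1},\wti\rho_{k_0}^{-1},\al_{k_0},\al_{k_0}^*,\ga_2^{\pm 1/2}$ so as to match the $\ga_1$-``$+$'' initial data \eqref{3.56} right-multiplied by the claimed $C_3,D_3,C_4,D_4$.

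I expect the main obstacle to be purely bookkeeping: noncommutativity of $\al_{k_0},\rho_{k_0},\wti\rho_{k_0},\ga_j^{\pm 1/2}$ forces careful tracking of the order of factors and of the right-multiplication by the $C_j,D_j$, and the two parities of $k_0$ must be handled separately (with slightly different powers of $z$). No new conceptual input is required beyond that already used in the scalar Lemma~\ref{l2.4}.
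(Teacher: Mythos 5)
Your proposal is correct and follows essentially the same route as the paper's proof: both sides of each identity solve the recursion \eqref{3.22} (right-multiplication by a constant matrix preserves solutions since $\T(z,k)$ acts on the left), so it suffices to check \eqref{3.58}--\eqref{3.61} at $k=k_0$ via \eqref{3.56}, \eqref{3.57}, and to verify \eqref{3.62}, \eqref{3.63} by using the transfer matrix $\T(z,k_0)$ to align the evaluation points $k_0-1$ and $k_0$ (the paper applies $\T(z,k_0)$ to the left-hand sides; your applying $\T(z,k_0)^{-1}$ to the other side is the same computation, though note it is the ``$+$'' data at $k_0$, not the ``$-$'' data at $k_0-1$, that gets transported back a step).
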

\begin{proof}
Since each  sequence in equations \eqref{3.58}--\eqref{3.63}  satisfies the recurrence relation \eqref{3.22}, it is necessary only to check equality at $k=k_0$. Substituting \eqref{3.56}, \eqref{3.57} into \eqref{3.58}--\eqref{3.61} suffices to verify the identities. In addition to use of \eqref{3.56}, \eqref{3.57}, application of the transfer matrix $\T(z,k_0)$ in \eqref{3.23} to the left sides of \eqref{3.62}, \eqref{3.63} suffices in the verification of the later two identities.
\end{proof}

Next, we introduce half-lattice Weyl-Titchmarsh m-functions associated with the half-lattice CMV operators, $\U_{\pm,k_0}^{(\ga)}$, described in \eqref{3.26}.

Let $\De_k=\{\De_k(\ell)\}_{\ell\in\Z}\in\smm{\Z}$, $k\in\Z$, denote
the sequences of $m\times m$ matrices defined by
\begin{align}
(\De_k)(\ell) =
\begin{cases}
I_m, & \ell=k,\\
0, & \ell\neq k,
\end{cases}
\quad k,\ell\in\Z.
\end{align}
Then using right-multiplication by $m\times m$ matrices on
$\smm{\Z}$ defined in Remark \ref{r3.1}, we get the
identity
\begin{align}
(\De_k X)(\ell) =
\begin{cases}
X, & \ell=k,\\
0, & \ell\neq k,
\end{cases}  \quad  X\in\Cm,
\end{align}
and hence consider $\De_k$ as a map $\De_k\colon \Cm\to\smm{\Z}$. In
addition, we introduce the map $\De_k^*\colon \smm{\Z}\to\Cm$,
$k\in\Z$, defined by
\begin{align}
\De_k^* \Phi = \Phi(k), \,\text{ where }\,
\Phi=\{\Phi(k)\}_{k\in\Z}\in\smm{\Z}.
\end{align}
Similarly, one introduces the corresponding maps with $\bbZ$ replaced by
$[k_0,\pm\infty)\cap\Z$, $k_0\in\Z$, which, for notational brevity, we
will also denote by $\De_k$ and $\De_k^*$, respectively.

For $k_0\in\Z$, and for a unitary $\ga\in\Cm$, let $d\Om_\pm^{(\ga)}(\cdot,k_0)$ denote the $\Cm$-valued measures  on $\dD$
defined by
\begin{equation}
d\Om_\pm^{(\ga)}(\zeta,k_0) = d(\De_{k_0}^* E_{\U_{\pm,k_0}^{(\ga)}}(\zeta)\De_{k_0}),
\quad \zeta\in\dD,
\end{equation}
where $E_{\U_{\pm,k_0}^{(\ga)}}(\cdot)$ denotes the family of spectral projections for
the half-lattice unitary operators $\U_{\pm,k_0}^{(\ga)}$,
\begin{equation}
\U_{\pm,k_0}^{(\ga)}=\oint_{\dD}dE_{\U_{\pm,k_0}^{(\ga)}}(\zeta)\,\zeta.
\end{equation}
Then, the half-lattice Weyl-Titchmarsh m-functions, $m_\pm^{(\ga)}(z,k_0)$, are defined by
\begin{align}
m_\pm^{(\ga)}(z,k_0) &= \pm \De_{k_0}^*(\U_{\pm,k_0}^{(\ga)}+zI)(\U_{\pm,k_0}^{(\ga)}-zI)^{-1}
\De_{k_0} \lb{3.69}
\\
& =\pm
\oint_{\dD}d\Om_{\pm}^{(\ga)}(\zeta,k_0)\,\frac{\zeta+z}{\zeta-z},
\quad z\in\bbC\backslash\dD,\lb{3.70}
\end{align}
with
\begin{equation}
 \oint_{\dD}d\Om_{\pm}^{(\ga)}(\zeta,k_0)= I_m.
\end{equation}
As defined, we note that $m_\pm^{(\ga)}(z,k)$ are matrix-valued Caratheodory functions: see Appendix~\ref{sA} for definition and further properties.

Proof of the next result in the special case when $\ga=I_m$ can be found in \cite[Lemma~2.13, Theorem~2.17]{CGZ07}.
The proof in the case for general unitary $\ga\in\Cm$ follows
the same lines and is omitted here for the sake of brevity.

\begin{theorem} \lb{t3.10}
Let $k_0\in\bbZ$. Then, for each unitary $\ga\in\Cm$, for $P_{\pm,k_0}^{(\ga)}$, $Q_{\pm,k_0}^{(\ga)}$, $R_{\pm,k_0}^{(\ga)}$, $S_{\pm,k_0}^{(\ga)}$ defined in \eqref{3.56} and \eqref{3.57}, and for $m_\pm^{(\ga)}(z,k_0)$ defined in \eqref{3.69}, the following relations hold for $z\in\bbC\backslash(\dD\cup\{0\})$:
\begin{align}
\hatt U^{(\ga)}_\pm(z,\cdot,k_0)=Q_\pm^{(\ga)}(z,\cdot,k_0) + P_\pm^{(\ga)}(z,\cdot,k_0)m_\pm^{(\ga)}(z,k_0) \in
\ltmm{[k_0,\pm\infty)\cap\Z},\lb{3.72}
\\
\hatt V^{(\ga)}_\pm(z,\cdot,k_0)=S_\pm^{(\ga)}(z,\cdot,k_0) + R_\pm^{(\ga)}(z,\cdot,k_0)m_\pm^{(\ga)}(z,k_0) \in
\ltmm{[k_0,\pm\infty)\cap\Z}.\lb{3.73}
\end{align}
Moreover, there exist  unique $\Cm$-valued functions
$M_\pm^{(\ga)}(\cdot,k_0)$ such that for all
$z\in\bbC\backslash(\dD\cup\{0\})$,
\begin{align}
&U_\pm^{(\ga)}(z,\cdot,k_0) = Q_+^{(\ga)}(z,\cdot,k_0) + P_+^{(\ga)}(z,\cdot,k_0)M_\pm^{(\ga)}(z,k_0)
\in \ltmm{[k_0,\pm\infty)\cap\Z},\lb{3.74}
\\
&V_\pm^{(\ga)}(z,\cdot,k_0) = S_+^{(\ga)}(z,\cdot,k_0) + R_+^{(\ga)}(z,\cdot,k_0)M_\pm^{(\ga)}(z,k_0)
\in \ltmm{[k_0,\pm\infty)\cap\Z}.\lb{3.75}
\end{align}
\end{theorem}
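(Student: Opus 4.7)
The plan is to reduce to the already-established special case $\ga = I_m$ covered by \cite[Lem.\ 2.13, Thm.\ 2.17]{CGZ07} and transfer the conclusion back via the unitary equivalence \eqref{3.32}. Set $\si = \ga^{-1/2}$, $\ta = \ga^{1/2}$, and, in analogy with \eqref{3.29}, introduce the unitary operators $\A_{\pm} = \si\dodd + \ta\deven$ and $\wti\A_{\pm} = \ta\dodd + \si\deven$ on $\ltm{[k_0,\pm\infty)\cap\Z}$. Restricting the full-lattice identity \eqref{3.32} to each half-lattice, one obtains
\begin{equation*}
\A_{\pm}\,\U_{\al;\pm,k_0}^{(\ga)}\,\A_{\pm}^* = \U_{\wti\al;\pm,k_0},
\qquad \wti\al_k = \ga^{-1/2}\al_k\ga^{-1/2},\; k\in\Z,
\end{equation*}
with $\wti\al_{k_0} = I_m$, so the right-hand side is the reference half-lattice CMV operator treated in \cite{CGZ07}.

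The second step is to match the twisted initial data in \eqref{3.56}--\eqref{3.57} with the pullback of the untwisted ($\ga = I_m$) reference solutions. By \eqref{3.30}--\eqref{3.31}, the factorizations $\U^{(\ga)} = \V^{(\ga)}\W^{(\ga)}$ and $\wti\U = \wti\V\wti\W$ are related via $\V^{(\ga)} = \A_\pm^*\,\wti\V\,\wti\A_\pm$ and $\W^{(\ga)} = \wti\A_\pm^*\,\wti\W\,\A_\pm$; consequently, if $(\wti U,\wti V)$ is the reference solution pair for $\wti\al$ from Lemma~\ref{l3.8}, then $U^{(\ga)} = \A_\pm^*\wti U$ and $V^{(\ga)} = \wti\A_\pm^*\wti V$ solve the corresponding equations for $\al$. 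Evaluating at $k = k_0$ shows that \eqref{3.56}--\eqref{3.57} are precisely the initial values produced in this way. The same twist argument applied to the spectral definition \eqref{3.69} identifies $m_\pm^{(\ga)}(z,k_0)$ as the conjugate of the reference $m$-function for $\U_{\wti\al;\pm,k_0}$ by $\ga^{1/2}$ or $\ga^{-1/2}$ (depending on parity of $k_0$).

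With this dictionary, the $\ell^2$ statements \eqref{3.72}--\eqref{3.73} follow from \cite[Lem.\ 2.13]{CGZ07} applied to $\wti\al$, pulled back by the unitaries $\A_\pm^*$ and $\wti\A_\pm^*$, which preserve $\ltmm{[k_0,\pm\infty)\cap\Z}$. The existence part of \eqref{3.74}--\eqref{3.75} is obtained analogously from \cite[Thm.\ 2.17]{CGZ07}. For uniqueness of $M_\pm^{(\ga)}(z,k_0)$, I would verify from \eqref{3.56} that at $k = k_0$ the $2m \times 2m$ block matrix with block columns $\binom{P_+^{(\ga)}(z,k_0,k_0)}{R_+^{(\ga)}(z,k_0,k_0)}$ and $\binom{Q_+^{(\ga)}(z,k_0,k_0)}{S_+^{(\ga)}(z,k_0,k_0)}$ has nonzero determinant for every $z \in \C\setminus\{0\}$. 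Hence $\big(P_+^{(\ga)},R_+^{(\ga)}\big)$ and $\big(Q_+^{(\ga)},S_+^{(\ga)}\big)$ form a fundamental system of $\Cm$-valued solutions of the recursion \eqref{3.22}, so every $\ltmm{[k_0,\pm\infty)\cap\Z}$ solution admits a unique right-factorization of the form \eqref{3.74}--\eqref{3.75}, which determines $M_\pm^{(\ga)}(z,k_0)$ uniquely.

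The main obstacle I anticipate is the bookkeeping of the $\ga^{\pm 1/2}$-twists at $k_0$. In particular, one must carefully track that the $U$- and $V$-components of a solution pair transform under \emph{different} unitaries ($\A_\pm^*$ and $\wti\A_\pm^*$, respectively) by \eqref{3.30}--\eqref{3.31}, which accounts precisely for the different twists appearing in the $P$- and $R$-rows of \eqref{3.56}--\eqref{3.57}. Once the single-site matching at $k = k_0$ is verified, the recursion \eqref{3.22} and the unitarity of $\A_\pm,\wti\A_\pm$ propagate the identification to all of $[k_0,\pm\infty)\cap\Z$, and the conclusions follow from the reference statements by pullback.
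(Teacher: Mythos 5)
Your route is genuinely different from the paper's: the authors' proof consists of the remark that one reruns the direct argument of \cite{CGZ07} (Lemma~2.13, Theorem~2.17) with general unitary $\ga$, whereas you reduce to the $\ga=I_m$ case through the unitary equivalence \eqref{3.32} --- the same device the paper uses for Lemma~\ref{l3.8}. The reduction is attractive and large parts of it are correct: the single-site matching of \eqref{3.56}--\eqref{3.57} with the pullbacks $\A_\pm^*\wti P_\pm$, $\wti\A_\pm^*\wti R_\pm$, etc.\ does check out; the operators $\A_\pm^*$, $\wti\A_\pm^*$ are bounded, boundedly invertible and diagonal, so they carry the $\ell^2$ Weyl subspace of the reference problem onto that of the twisted problem; and your fundamental-system argument for the existence and uniqueness of $M_\pm^{(\ga)}$ in \eqref{3.74}--\eqref{3.75} is sound, since those two relations do not prescribe a formula for $M_\pm^{(\ga)}$.

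The gap is in the transfer of \eqref{3.72}--\eqref{3.73}, and it sits exactly at the ``$\ga^{\pm 1/2}$ bookkeeping'' you flagged but did not carry out. Pulling back the reference relation $\wti Q_\pm + \wti P_\pm\,\wti m_\pm \in \ell^2$ by $\A_\pm^*$ gives
\begin{equation*}
Q_\pm^{(\ga)}(z,\cdot,k_0) + P_\pm^{(\ga)}(z,\cdot,k_0)\,\wti m_\pm(z,k_0)\in\ltmm{[k_0,\pm\infty)\cap\Z},
\end{equation*}
because the constant right factor $\wti m_\pm(z,k_0)$ passes through the left-acting diagonal unitary unchanged. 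On the other hand, as you yourself compute, the spectral definition \eqref{3.69} transforms by conjugation, $m_\pm^{(\ga)}(z,k_0)=\A_{k_0,k_0}^*\,\wti m_\pm(z,k_0)\,\A_{k_0,k_0}$ with $\A_{k_0,k_0}=\ga^{\mp 1/2}$ according to the parity of $k_0$. Hence your argument delivers the coefficient $\ga^{\mp 1/2}m_\pm^{(\ga)}(z,k_0)\ga^{\pm 1/2}$, not $m_\pm^{(\ga)}(z,k_0)$ as asserted in \eqref{3.72}. These two assertions are not interchangeable: the only constant $C\in\Cm$ with $P_\pm^{(\ga)}(z,\cdot,k_0)C\in\ltmm{[k_0,\pm\infty)\cap\Z}$ is $C=0$ (otherwise $P_\pm^{(\ga)}C$ would be a right multiple of the Weyl solution, contradicting the linear independence of $P_\pm^{(\ga)}$ and $Q_\pm^{(\ga)}$), so the pulled-back statement coincides with \eqref{3.72} if and only if $m_\pm^{(\ga)}(z,k_0)$ commutes with $\ga^{1/2}$ --- a nontrivial matrix identity that you neither prove nor may assume. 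To close the argument you must either establish that commutation or verify directly (for instance by redoing the resolvent computation of \cite{CGZ07} for $\U_{\pm,k_0}^{(\ga)}$ itself, which is what the paper intends by ``follows the same lines'') that the coefficient attached to $P_\pm^{(\ga)}$ in the $\ell^2$ combination is precisely $\De_{k_0}^*\big(\U_{\pm,k_0}^{(\ga)}+zI\big)\big(\U_{\pm,k_0}^{(\ga)}-zI\big)^{-1}\De_{k_0}$ rather than a $\ga^{\pm 1/2}$-conjugate of it.
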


\begin{remark}\lb{r3.11}
Within the proof of Theorem~\ref{t3.10}, one observes that the sequences,
\begin{equation}\lb{3.76}
\binom{U_\pm^{(\ga)}(z,k,k_0)}{V_\pm^{(\ga)}(z,k,k_0)}_{k\in\Z},
\end{equation}
defined by \eqref{3.74} and \eqref{3.75},
are unique up to right-multiplication by constant $m\times m$ matrices.
Hence, we shall call $U_\pm^{(\ga)}(z,\cdot,k_0)$ the Weyl--Titchmarsh solutions of $\U$.  Note also that $m_\pm^{(\ga)}(z,k_0)$, as well as $M_\pm^{(\ga)}(z,k_0)$, are said to be half-lattice Weyl--Titchmarsh $m$-functions associated with
$\U_{\pm,k_0}^{(\ga)}$. (See also \cite{Si04a} for a comparison of various
alternative notions of Weyl--Titchmarsh $m$-functions for
$\U_{+,k_0}^{(\ga)}$ with scalar-valued Verblunsky coefficients.)
\end{remark}

For fixed $k_0\in\Z$, $z\in\bbC\backslash(\dD\cup\{0\})$, and unitary $\ga\in\Cm$, using \eqref{3.70} and Theorem~\ref{t3.10}, we obtain,
\begin{equation}\lb{3.77}
M_+^{(\ga)}(z,k_0)=m_+^{(\ga)}(z,k_0),\quad
M_+^{(\ga)}(0,k_0)=I_m.
\end{equation}
In particular, by \eqref{3.77} and the uniqueness up to right multiplication by constant $m\times m$ matrices noted in Remark~\ref{r3.11}, note for \eqref{3.72}--\eqref{3.75} that
\begin{equation}
\hatt U^{(\ga)}_+( z,k,k_0)= U^{(\ga)}_+( z,k,k_0), \quad
\hatt V^{(\ga)}_+( z,k,k_0)= V^{(\ga)}_+( z,k,k_0).
\end{equation}

Following the line of reasoning presented for the special case when $\ga=I_m$ found in \cite{CGZ07}, one uses the equations in Lemma~\ref{l3.9},  together with Theorem~\ref{t3.10}, to obtain the following equations where $C_3,\ D_3$, and $ C_4,\ D_4$ are defined in \eqref{3.62} and  \eqref{3.63} respectively,  with $\ga=\ga_1=\ga_2$.
\begin{align}
M_-^{(\ga)}(z,k_0)&=
\big[D_3+D_4m_-^{(\ga)}(z,k_0-1)\big]\big[C_3+C_4m_-^{(\ga)}(z,k_0-1)\big]^{-1}
\lb{3.79}\\
&=\big[m_-^{(\ga)}(z,k_0)+I_m + z(m_-^{(\ga)}(z,k_0)-I_m)\big] \\
&\hspace{15pt}\times
\big[m_-^{(\ga)}(z,k_0)+I_m - z(m_-^{(\ga)}(z,k_0)-I_m)\big]^{-1},\,
z\in\bbC\backslash(\dD\cup\{0\}),  \no \\
M_-^{(\ga)}(0,k_0)&=
[\ga^{-1/2}\wti\rho^{-1}_{k_0}\ga^{-1/2}+\ga^{1/2}\rho^{-1}_{k_0}\ga^{1/2}]
[\ga^{-1/2}\wti\rho^{-1}_{k_0}\ga^{-1/2}-\ga^{1/2}\rho^{-1}_{k_0}\ga^{1/2}]^{-1},\lb{3.81}\\
m_-^{(\ga)}(z,k_0)&= \big[z(M_-^{(\ga)}(z,k_0) +I_m) - (M_-^{(\ga)}(z,k_0)-I_m)\big]
\lb{3.82} \\
&\hspace{15pt}\times \big[z(M_-^{(\ga)}(z,k_0) +I_m) + (M_-^{(\ga)}(z,k_0)-I_m)\big]^{-1},\, z\in\bbC\backslash(\dD\cup\{0\}).  \no
\end{align}
By their relations to the Caratheodory functions $m_\pm^{(\ga)}(z,k)$ given in \eqref{3.77} and \eqref{3.79}, we see that $M_\pm^{(\ga)}(z,k)$ are also matrix-valued Caratheodory functions.

Next, we introduce the $\Cm$-valued Schur functions $\Phi_\pm^{(\ga)}(\cdot,k)$,
$k\in\bbZ$, by
\begin{align}
\Phi_\pm^{(\ga)}(z,k) = \big[M_\pm^{(\ga)}(z,k)-I_m\big]
\big[M_\pm^{(\ga)}(z,k)+I_m\big]^{-1}, \quad
z\in\C\backslash\dD. \lb{3.83}
\end{align}
Then, by \eqref{3.82}  and \eqref{3.83}, one verifies that
\begin{align}
M_\pm^{(\ga)}(z,k) &= \big[I_m-\Phi_\pm^{(\ga)}(z,k)\big]^{-1}
\big[I_m+\Phi_\pm^{(\ga)}(z,k)\big], \quad
z\in\C\backslash\dD,
\\
m_-^{(\ga)}(z,k) &= \big[zI_m+\Phi_-^{(\ga)}(z,k)\big]^{-1}
\big[zI_m-\Phi_-^{(\ga)}(z,k)\big], \quad
z\in\C\backslash\dD. \lb{3.85}
\end{align}
Moreover, by \eqref{3.56}, \eqref{3.83}, and Theorem~\ref{t3.10},  it follows, as in \cite[Lemma 2.18]{CGZ07}, that for $k\in\Z,\ z\in\C\backslash\dD$,
\begin{equation}\lb{3.86}
\Phi_\pm^{(\ga)}(z,k) =
\begin{cases}
z\ga^{1/2}V_\pm^{(\ga)}(z,k,k_0)U_\pm^{(\ga)}(z,k,k_0)^{-1}\ga^{1/2},& k \text{ odd,}\\
\ga^{1/2}U_\pm^{(\ga)}(z,k,k_0)V_\pm^{(\ga)}(z,k,k_0)^{-1}\ga^{1/2},& k \text{ even,}
\end{cases}
\end{equation}
where $U_\pm^{(\ga)}(z,k,k_0)$ and $V_\pm^{(\ga)}(z,k,k_0)$ are sequences defined in \eqref{3.74} and \eqref{3.75}. Since the Weyl-Titchmarsh solutions defined in \eqref{3.76} are unique up to right multiplication by a constant complex $m\times m$ matirx, \eqref{3.86} implies that $\ga^{-1/2}\Phi_\pm^{(\ga)}(\cdot,k)\ga^{-1/2}$ is $\ga$-independent, and hence for unitary $\ga_1,\ga_2\in\Cm$, and $k\in\Z$, that
\begin{align}
\Phi_\pm^{(\ga_2)}(\cdot,k)&=\ga_2^{1/2}\ga_1^{-1/2}\Phi_\pm^{(\ga_1)}(\cdot,k)\ga_1^{-1/2}\ga_2^{1/2},\\
M_\pm^{(\ga_2)}(\cdot,k) &=\big[(\ga_2^{-1/2}\ga_1^{1/2}+\ga_2^{1/2}\ga_1^{-1/2})M_\pm^{(\ga_1)}(\cdot,k)+(\ga_2^{-1/2}\ga_1^{1/2}-\ga_2^{1/2}\ga_1^{-1/2})\big]\no\\
&\hspace{15pt}\times \big[(\ga_2^{-1/2}\ga_1^{1/2}-\ga_2^{1/2}\ga_1^{-1/2})
M_\pm^{(\ga_1)}(\cdot,k)+(\ga_2^{-1/2}\ga_1^{1/2}+\ga_2^{1/2}\ga_1^{-1/2})\big]^{-1}.
\end{align}

Full and half-lattice resolvent operators lie at the heart of our analysis of the Weyl-Titchmarsh theory for full and half-lattice CMV operators; in particular, as a tool in obtaining our Borg-Marchenko-type uniqueness results in \cite{CGZ07} for CMV operators with matrix-valued coefficients. Hence, we conclude with a discussion of resolvent operators for a general unitary $\ga\in\Cm$.

First, we note the utility of the identities contained in the following lemma. This lemma was proven in \cite[Lemma~3.2]{CGZ07} for matrix-Laurent polynomial solutions of \eqref{3.22} defined by \eqref{3.56} in the special case when $\ga=I_m$. The identities listed below were central to the derivation of the full-lattice resolvent operator in \cite[Lemma 3.3]{CGZ07}.

\begin{lemma}\lb{l3.12}
Let $k,k_0\in\Z$ and $z\in\Cz$.  Then, for a unitary $\ga\in\Cm$, the following identities hold for the matrix-Laurent polynomial solutions of \eqref{3.22} defined in \eqref{3.56} and \eqref{3.57}:
\begin{align}
& P^{(\ga)}_\pm(z,k,k_0)Q^{(\ga)}_\pm(1/\ol{z},k,k_0)^* +
Q^{(\ga)}_\pm(z,k,k_0)P^{(\ga)}_\pm(1/\ol{z},k,k_0)^* = 2(-1)^{k+1}I_m,\lb{3.89}
\\
& R^{(\ga)}_\pm(z,k,k_0)S^{(\ga)}_\pm(1/\ol{z},k,k_0)^* +
S^{(\ga)}_\pm(z,k,k_0)R^{(\ga)}_\pm(1/\ol{z},k,k_0)^* = 2(-1)^{k}I_m,
\\
& P^{(\ga)}_\pm(z,k,k_0)S^{(\ga)}_\pm(1/\ol{z},k,k_0)^* +
Q^{(\ga)}_\pm(z,k,k_0)R^{(\ga)}_\pm(1/\ol{z},k,k_0)^* = 0,
\\
& R^{(\ga)}_\pm(z,k,k_0)Q^{(\ga)}_\pm(1/\ol{z},k,k_0)^* +
S^{(\ga)}_\pm(z,k,k_0)P^{(\ga)}_\pm(1/\ol{z},k,k_0)^* = 0.\lb{3.92}
\end{align}
\end{lemma}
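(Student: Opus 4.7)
The plan is to consolidate the four scalar-looking identities \eqref{3.89}--\eqref{3.92} into a single $2m\times 2m$ matrix identity and then prove it by a transfer-matrix propagation, reducing the entire claim to (i) a verification at the base point $k=k_0$ from the initial data, and (ii) one algebraic identity about $\T(z,k)$. Define the $2m\times 2m$ block matrix
\begin{equation*}
\cM_\pm(z,k) = \begin{pmatrix} P^{(\ga)}_\pm(z,k,k_0) & Q^{(\ga)}_\pm(z,k,k_0) \\ R^{(\ga)}_\pm(z,k,k_0) & S^{(\ga)}_\pm(z,k,k_0)\end{pmatrix}, \quad
\cJ = \begin{pmatrix} 0 & I_m \\ I_m & 0 \end{pmatrix}, \quad
\cL = \begin{pmatrix} -I_m & 0 \\ 0 & I_m\end{pmatrix}.
\end{equation*}
A block-by-block multiplication shows that the four equalities \eqref{3.89}--\eqref{3.92} are jointly equivalent to the single identity
\begin{equation*}
\cM_\pm(z,k)\,\cJ\,\cM_\pm(1/\ol z,k)^* \;=\; 2(-1)^k \cL, \qquad k\in\Z,\; z\in\Cz.
\end{equation*}

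First I would check this at $k=k_0$ by substituting the initial values in \eqref{3.56}, \eqref{3.57}. Since $\ga$ is unitary, so is $\ga^{1/2}$, which gives $(\ga^{\pm 1/2})^* = \ga^{\mp 1/2}$; the two off-diagonal blocks then collapse to zero, and the two diagonal blocks simplify to $\pm 2I_m$, matching $2(-1)^{k_0}\cL$ in both parities of $k_0$ and both signs $\pm$. This is the only place where the unitarity of $\ga$ is needed; it is precisely the input that lets the argument from the $\ga=I_m$ case of \cite[Lemma~3.2]{CGZ07} go through verbatim in the present setting.

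Next I would propagate in $k$. By \eqref{3.22}, both columns of $\cM_+(z,\cdot)$ satisfy $\cM_+(z,k) = \T(z,k)\cM_+(z,k-1)$, so
\begin{equation*}
\cM_+(z,k)\,\cJ\,\cM_+(1/\ol z,k)^* \;=\; \T(z,k)\,\big[\cM_+(z,k-1)\,\cJ\,\cM_+(1/\ol z,k-1)^*\big]\,\T(1/\ol z,k)^*,
\end{equation*}
so the induction on $k\ge k_0$ reduces to the single algebraic identity
\begin{equation*}
\T(z,k)\,\cL\,\T(1/\ol z,k)^* \;=\; -\cL, \qquad k\in\Z,\; z\in\Cz.
\end{equation*}
This identity is verified directly from the two cases of \eqref{3.23} by block multiplication: in the odd case the $z$ and $1/\ol z$ factors in the off-diagonal entries of $\T$ combine to real numerical factors so that all four block entries reduce, via $\rho_k^2 = I_m - \al_k^*\al_k$ and $\wti\rho_k^2 = I_m - \al_k\al_k^*$ together with the intertwining relations \eqref{3.12}, to $\pm I_m$ or $0$; the even case is even simpler since $\T(z,k)$ is independent of $z$. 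For the $-$ case and $k< k_0$, the recursion in Lemma \ref{l3.8}(vi) uses $\T(z,k)^{-1}$; inverting the above display gives $\T(z,k)^{-1}\cL\,(\T(1/\ol z,k)^*)^{-1} = -\cL$, so the induction descends to $k< k_0$ in the same manner.

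The main (and essentially only) obstacle is bookkeeping: tracking the parity of $k$ in the two cases of \eqref{3.23}, and keeping straight which argument is $z$ and which is $1/\ol z$ when taking adjoints. Once the display identity for $\T$ is in hand, the matrix form collapses the entire lemma to two short computations, and the argument matches the structure of the scalar/$\ga=I_m$ proofs in \cite{GZ06}, \cite{CGZ07}.
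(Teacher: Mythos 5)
Your proof is correct and follows essentially the same route as the paper's: verification at $k=k_0$ from the initial data \eqref{3.56}, \eqref{3.57} (where unitarity of $\ga$ makes the blocks collapse), followed by induction in both directions along $\Z$ via the transfer matrix recursion \eqref{3.22}. Your packaging of the four identities as the single relation $\cM_\pm(z,k)\,\cJ\,\cM_\pm(1/\ol z,k)^*=2(-1)^k\cL$, with the inductive step isolated in the identity $\T(z,k)\,\cL\,\T(1/\ol z,k)^*=-\cL$, is exactly the computation the paper carries out entry by entry in its sample case, just stated once and for all rather than repeated for each of the four identities and each parity.
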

\begin{proof}
For the case $k=k_0$, each of the equations \eqref{3.89}--\eqref{3.92} follows from \eqref{3.56} and \eqref{3.57}. The induction argument described in \cite[Lemma~3.2]{CGZ07} then suffices to establish \eqref{3.89}--\eqref{3.92} when $k\ne k_0$. As already noted, the proof in \cite[Lemma~3.2]{CGZ07} treats the special case when $\ga=I_m$ for solutions defined by \eqref{3.56}. In all cases under consideration, the proof involves a number of cases all following a similar pattern. We outline one of these cases for a solution of \eqref{3.22} defined in \eqref{3.57}.

Suppose equations \eqref{3.89}--\eqref{3.92} hold when $k\in\Z$ is even.
Then utilizing \eqref{3.22} together with \eqref{3.8} and \eqref{3.9},
one computes
\begin{align}
&P^{(\ga)}_-(z,k+1,k_0)Q^{(\ga)}_-(1/\ol{z},k+1,k_0)^* +
Q^{(\ga)}_-(z,k+1,k_0)P^{(\ga)}_-(1/\ol{z},k+1,k_0)^* \no
\\
&\hspace{5pt} = \wti\rho_{k+1}^{-1}\al_{k+1}
\big[P^{(\ga)}_-(z,k,k_0)Q^{(\ga)}_-(1/\ol{z},k,k_0)^* +
Q^{(\ga)}_-(z,k,k_0)P^{(\ga)}_-(1/\ol{z},k,k_0)^*\big] \al_{k+1}^*\wti\rho_{k+1}^{-1}
\no
\\
&\hspace{15pt} + \wti\rho_{k+1}^{-1} \big[R^{(\ga)}_-(z,k,k_0)S^{(\ga)}_-(1/\ol{z},k,k_0)^*
+ S^{(\ga)}_-(z,k,k_0)R^{(\ga)}_-(1/\ol{z},k,k_0)^*\big] \wti\rho_{k+1}^{-1}\no
\\
&\hspace{15pt} + z\wti\rho_{k+1}^{-1} \big[R^{(\ga)}_-(z,k,k_0)Q^{(\ga)}_-(1/\ol{z},k,k_0)^*
+ S^{(\ga)}_-(z,k,k_0)P^{(\ga)}_-(1/\ol{z},k,k_0)^*\big]
\al_{k_0}^*\wti\rho_{k+1}^{-1} \no
\\
&\hspace{15pt} + \wti\rho_{k+1}^{-1}\al_{k_0}
\big[P^{(\ga)}_-(z,k,k_0)S^{(\ga)}_-(1/\ol{z},k,k_0)^* +
Q^{(\ga)}_-(z,k,k_0)R^{(\ga)}_-(1/\ol{z},k,k_0)^*\big] \wti\rho_{k+1}^{-1}z^{-1} \no
\\
&\hspace{5pt} = 2(-1)^{k+1}
\big[\wti\rho_{k+1}^{-1}\al_{k+1}\al_{k+1}^*\wti\rho_{k+1}^{-1} -
\wti\rho_{k+1}^{-2}\big] = 2(-1)^{(k+1)+1}I_m.
\end{align}
Similarly, one checks all remaining cases at the point $k+1$. Then by inverting the matrix $\T(z,k)$ and utilizing \eqref{3.22} in the form
\begin{align}
\binom{P_-(z,k-1,k_0)}{R_-(z,k-1,k_0)}  = \T(z,k)^{-1}
\binom{P_-(z,k,k_0)}{R_-(z,k,k_0)},
\end{align}
where
\begin{equation}
\T(z,k)^{-1}=
\begin{cases}
\begin{pmatrix}
-\rho^{-1}_k\al^*_k& z\rho^{-1}_k\\z^{-1}\wti\rho^{-1}_k&-\wti\rho^{-1}_k\al_k
\end{pmatrix}
&k\ odd,\\[15pt]
\begin{pmatrix}
-\wti\rho^{-1}_k\al_k&\wti\rho^{-1}_k\\ \rho^{-1}_k&-\rho^{-1}_k\al^*_k
\end{pmatrix}
&k\ even,
\end{cases}
\end{equation}
one verifies the equations \eqref{3.89}--\eqref{3.92} at the point $k-1$. Similarly,
one verifies \eqref{3.89}--\eqref{3.92} at the points $k+1$ and $k-1$
under the assumption that $k\in\Z$ is odd.
\end{proof}

The next lemma introduces the half-lattice resolvent operators for $\U_{\pm,k_0}^{(\ga)}$; this appears to be a new result:

\begin{lemma}\lb{l3.13}
Let $z\in\bbC\backslash(\dD\cup\{0\})$ and fix $k_0\in\bbZ$. Then, for a unitary $\ga\in\Cm$, the
resolvent $(\U_{\pm,k_0}^{(\ga)}-zI)^{-1}$ for the unitary CMV operator $\U_{\pm,k_0}^{(\ga)}$  on
$\ltm{[k_0,\pm\infty)\cap\bbZ}$ is given in terms of its matrix representation in the
standard basis of $\ltm{[k_0,\pm\infty)\cap\bbZ}$ by
\begin{align}
&(\U_{+,k_0}^{(\ga)}-zI)^{-1}=\frac1{2z}
\begin{cases}
-P^{(\ga)}_+(z,k,k_0)\hatt U^{(\ga)}_+(1/\bar z,k',k_0)^*,\\
\hspace{48pt} k<k'\ and\ k=k'\ odd,\\
\hatt U^{(\ga)}_+( z,k,k_0)P^{(\ga)}_+(1/\bar z,k',k_0)^*,\\
\hspace{33pt} k>k'\ and\ k=k'\ even,
\end{cases}
k, k'\in[k_0,\infty)\cap\bbZ,\lb{3.95}
\intertext{where $P^{(\ga)}_+(z,k,k_0)$ is defined in \eqref{3.56}, and
$\hatt U^{(\ga)}_+( z,k,k_0)$ is defined in \eqref{3.72};}
&(\U_{-,k_0}^{(\ga)}-zI)^{-1}=\frac1{2z}
\begin{cases}
-\hatt U^{(\ga)}_-(z,k,k_0)P^{(\ga)}_-(1/\bar z,k',k_0)^*,\\
\hspace{48pt} k<k'\ and\ k=k'\ odd,\\
P^{(\ga)}_-(z,k,k_0)\hatt U^{(\ga)}_-(1/\bar z,k',k_0)^*,\\
\hspace{33pt} k>k'\ and\ k=k'\ even,
\end{cases}
\hspace{-5pt}k, k'\in(-\infty,k_0]\cap\bbZ,\lb{3.96}
\end{align}
where  $P^{(\ga)}_-(z,k,k_0)$ is defined in \eqref{3.57}, and $\hatt U^{(\ga)}_-( z,k,k_0)$ is defined in \eqref{3.72}.
\end{lemma}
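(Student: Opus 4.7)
The plan is to verify that the operator $G_+^{(\ga)}(z)$ whose matrix kernel is the right-hand side of \eqref{3.95} is a bounded inverse of $\U_{+,k_0}^{(\ga)}-zI$ on $\ltm{[k_0,\infty)\cap\bbZ}$, and analogously for $G_-^{(\ga)}(z)$; uniqueness of the resolvent then forces $G_\pm^{(\ga)}(z)=\big(\U_{\pm,k_0}^{(\ga)}-zI\big)^{-1}$.

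The first step is to check that $G_\pm^{(\ga)}(z)$ defines a bounded operator. Since $\hatt U_\pm^{(\ga)}(z,\cdot,k_0)\in\ltmm{[k_0,\pm\infty)\cap\bbZ}$ by \eqref{3.72} and $P_\pm^{(\ga)}(z,\cdot,k_0)$ is a matrix-valued Laurent polynomial in $z$, the kernel decomposes into lower- and upper-triangular pieces each of which is a product of an $\ell^2$-column and a (locally) bounded row; a Schur-test argument then yields boundedness on $\ltm{[k_0,\pm\infty)\cap\bbZ}$.

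The main step is to verify the intertwining identity $\big(\U_{+,k_0}^{(\ga)}-zI\big)G_+^{(\ga)}(z)\Phi=\Phi$ for all $\Phi$ in the dense set of finitely supported sequences in $\ltm{[k_0,\infty)\cap\bbZ}$; by linearity it suffices to take $\Phi$ supported at a single index $k'\ge k_0$. For such $\Phi$, the sequence $\psi=G_+^{(\ga)}(z)\Phi$ is the concatenation of two pieces, one proportional to $P_+^{(\ga)}(z,\cdot,k_0)$ on one side of $k'$ and one proportional to $\hatt U_+^{(\ga)}(z,\cdot,k_0)$ on the other, with the location $k=k'$ assigned to one side or the other according to the parity of $k'$. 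Since both $P_+^{(\ga)}$ and $\hatt U_+^{(\ga)}$ satisfy $\U u=zu$ in the sense of difference expressions (Lemma \ref{l3.8}, Theorem \ref{t3.10}), the equation $(\U_{+,k_0}^{(\ga)}\psi)(k)-z\psi(k)=0$ holds automatically at every index $k$ whose five-diagonal stencil lies strictly within one of the two regions; the initial condition \eqref{3.56} for $P_+^{(\ga)}$ at $k_0$ further ensures that the equation holds at $k_0$ itself.

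The main obstacle is the jump computation at $k=k'$, where the two locally-$\U$-solution pieces meet. Using the factorization $\U=\V\W$ and the transfer-matrix presentation \eqref{3.22}, this reduces to a $2m\times 2m$ block identity involving $P_+^{(\ga)}(z,k',k_0)$, $\hatt U_+^{(\ga)}(z,k'\pm 1,k_0)$, and their companions $P_+^{(\ga)}(1/\ol z,k',k_0)^*$, $\hatt U_+^{(\ga)}(1/\ol z,k',k_0)^*$, split according to the parity of $k'$. Expanding $\hatt U_+^{(\ga)}=Q_+^{(\ga)}+P_+^{(\ga)}m_+^{(\ga)}(z,k_0)$ from \eqref{3.72} and invoking the Wronskian-type identities \eqref{3.89}--\eqref{3.92} of Lemma \ref{l3.12}, the terms containing $m_+^{(\ga)}(z,k_0)$ cancel pairwise, leaving a constant $2zI_m$ that exactly matches the $1/(2z)$ prefactor and produces the $I_m$ on the right. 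Square-summability of $\psi$ at $+\infty$ is inherited from $\hatt U_+^{(\ga)}$, completing the argument for $G_+^{(\ga)}(z)$. The operator $G_-^{(\ga)}(z)$ is treated by the parallel argument using the initial conditions \eqref{3.57} and the ``$-$'' statements of Lemmas \ref{l3.8} and \ref{l3.12}. Finally, the apparent singularity at $z=0$ is illusory: $P_\pm^{(\ga)}$ and $\hatt U_\pm^{(\ga)}$ are Laurent polynomials in $z$ whose lowest-order terms combine with the $1/(2z)$ prefactor to yield an operator analytic near $z=0$, consistent with $z=0$ lying in the resolvent set of the unitary operator $\U_{\pm,k_0}^{(\ga)}$.
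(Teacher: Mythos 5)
Your proposal follows essentially the same route as the paper: one verifies columnwise that the proposed kernel $G_+^{(\ga)}(z,\cdot,k',k_0)$ satisfies $\big(\U_{+,k_0}^{(\ga)}-zI\big)G_+^{(\ga)}(z,\cdot,k',k_0)=2z\De_{k'}$, using the fact that both pieces solve the difference equation away from $k'$ and carrying out a $2m\times 2m$ block jump computation at $k'$ (split by parity of $k'$) via the factorization $\U=\V\W$; square-summability at $\pm\infty$ is inherited from $\hatt U_\pm^{(\ga)}$. One point in your jump computation needs sharpening: the cancellation of the terms containing $m_+^{(\ga)}(z,k_0)$ is \emph{not} a consequence of the identities \eqref{3.89}--\eqref{3.92} alone. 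Expanding $\hatt U_+^{(\ga)}=Q_+^{(\ga)}+P_+^{(\ga)}m_+^{(\ga)}$ in the combination $P_+^{(\ga)}(z,k,k_0)\hatt U_+^{(\ga)}(1/\ol z,k,k_0)^*+\hatt U_+^{(\ga)}(z,k,k_0)P_+^{(\ga)}(1/\ol z,k,k_0)^*$ produces, besides the Wronskian bracket equal to $2(-1)^{k+1}I_m$ by \eqref{3.89}, the residual term $P_+^{(\ga)}(z,k,k_0)\big[m_+^{(\ga)}(1/\ol z,k_0)^*+m_+^{(\ga)}(z,k_0)\big]P_+^{(\ga)}(1/\ol z,k,k_0)^*$, and this vanishes only because $m_+^{(\ga)}(\cdot,k_0)$ is a Caratheodory matrix extended to $\bbC\backslash\dD$ as in \eqref{A.6}, whence $m_+^{(\ga)}(1/\ol z,k_0)^*=-m_+^{(\ga)}(z,k_0)$ by \eqref{A.7}. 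The paper isolates precisely this step in its auxiliary identities \eqref{3.98}--\eqref{3.99}; once you add the appeal to \eqref{A.7}, your argument is complete and coincides with the paper's.
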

\begin{proof}
We begin by noting that the following equations hold for $k\in\bbZ$:
\begin{align}
&R^{(\ga)}_\pm(z,k,k_0)\hatt U^{(\ga)}_\pm(1/\bar z,k,k_0)^* +
\hatt V^{(\ga)}_\pm( z,k,k_0)P^{(\ga)}_\pm(1/\bar z,k,k_0)^*=0,\lb{3.98}\\
&P^{(\ga)}_\pm(z,k,k_0)\hatt U^{(\ga)}_\pm( 1/\bar z,k,k_0)^* +
\hatt U^{(\ga)}_\pm( z,k,k_0)P^{(\ga)}_\pm(1/\bar z,k,k_0)^* = 2(-1)^{k+1}I_m.\lb{3.99}
\end{align}
These equations are a consequence of \eqref{3.72}, \eqref{3.73}, \eqref{3.89}, \eqref{3.92}, and the fact that
$m^{(\ga)}_\pm(z,k_0)$ are matrix-valued Caratheodory functions and hence satisfy the property given in \eqref{A.7}.

For $k,k'\in[k_0,\infty)\cap\bbZ$, let $G^{(\ga)}_+(z,k,k',k_0)$ be defined  by
\begin{equation}
G^{(\ga)}_+(z,k,k',k_0)=
\begin{cases}
-P^{(\ga)}_+(z,k,k_0)\hatt U^{(\ga)}_+(1/\bar z,k',k_0)^*,& k<k'\ and\ k=k'\ odd,\\
\hatt U^{(\ga)}_+( z,k,k_0)P^{(\ga)}_+(1/\bar z,k',k_0)^*,& k>k'\ and\ k=k'\ even.
\end{cases}
\end{equation}
Then, \eqref{3.95} is equivalent to showing that
\begin{equation}\lb{3.101}
\big(\U_{+,k_0}^{(\ga)}-zI\big)G^{(\ga)}_+(z,\cdot,k',k_0) = 2z\De_{k'},\quad k'\in[k_0,\infty)\cap\bbZ.
\end{equation}

Assume that $k'\in[k_0,\infty)\cap\bbZ$ is odd. Then, for $\ell \in([k_0,\infty)\cap\bbZ)\backslash
\{k',k'+1\}$, note that
\begin{align}\lb{3.102}
\big(\big(\U_{+,k_0}^{(\ga)}-zI\big)G^{(\ga)}_+(z,\cdot,k',k_0)\big)(\ell)
= \big(\big(\V_{+,k_0}^{(\ga)}\W_{+,k_0}^{(\ga)}-zI\big)G^{(\ga)}_+(z,\cdot,k',k_0)\big)(\ell) =0.
\end{align}
Next, by \eqref{3.98}, \eqref{3.99}, note that
\begin{align}
&\begin{pmatrix}
\big(\big(\U_{+,k_0}^{(\ga)}-zI\big)G^{(\ga)}_+(z,\cdot,k',k_0)\big)(k')\\
\big(\big(\U_{+,k_0}^{(\ga)}-zI\big)G^{(\ga)}_+(z,\cdot,k',k_0)\big)(k'+1)
\end{pmatrix}\no
\\
&\hspace{10pt}=
\begin{pmatrix}
\big(\big(\V_{+,k_0}^{(\ga)}\W_{+,k_0}^{(\ga)}-zI\big)G^{(\ga)}_+(z,\cdot,k',k_0)\big)(k')\\
\big(\big(\V_{+,k_0}^{(\ga)}\W_{+,k_0}^{(\ga)}-zI\big)G^{(\ga)}_+(z,\cdot,k',k_0)\big)(k'+1)
\end{pmatrix}\no
\\
&\hspace{10pt}=\Te(k'+1)
\begin{pmatrix}
-zR^{(\ga)}_+(z,k',k_0)\hatt U^{(\ga)}_+(1/\bar z,k',k_0)^*\\
z
\hatt V^{(\ga)}_+( z,k'+1,k_0)P^{(\ga)}_+(1/\bar z,k',k_0)^*
\end{pmatrix}
-z\begin{pmatrix}
G^{(\ga)}_+(z,k',k',k_0))\\G^{(\ga)}_+(z,k'+1,k',k_0))
\end{pmatrix}\no
\\
&\hspace{10pt}=\Te(k'+1)
\begin{pmatrix}
z\hatt V^{(\ga)}_+( z,k',k_0)P^{(\ga)}_+(1/\bar z,k',k_0)^*\\
z
\hatt V^{(\ga)}_+( z,k'+1,k_0)P^{(\ga)}_+(1/\bar z,k',k_0)^*
\end{pmatrix}
-z\begin{pmatrix}
G^{(\ga)}_+(z,k',k',k_0))\\G^{(\ga)}_+(z,k'+1,k',k_0))
\end{pmatrix}\no
\\
&\hspace{10pt}=
z\begin{pmatrix}
\hatt U^{(\ga)}_+( z,k,k_0)P^{(\ga)}_+(1/\bar z,k,k_0)^* +
P^{(\ga)}_+(z,k,k_0)\hatt U^{(\ga)}_+( 1/\bar z,k,k_0)^*
\\
0
\end{pmatrix}\no
\\
&\hspace{10pt}=
\begin{pmatrix}
2z(-1)^{k'+1}I_m\\0
\end{pmatrix}
=\begin{pmatrix}2zI_m\\0 \end{pmatrix}.\lb{3.103}
\end{align}
Hence, when $k'\in[k_0,\infty)\cap\bbZ$ is odd, \eqref{3.101} is a consequence of \eqref{3.102} and \eqref{3.103}.

Assume that $k'\in[k_0,\infty)\cap\bbZ$ is even. Then, for $\ell\in ([k_0,\infty)\cap\bbZ)\backslash
\{k'-1,k'\}$, note that \eqref{3.102} holds. Again, by \eqref{3.98} and \eqref{3.99}, note that
\begin{align}
&\begin{pmatrix}
\big(\big(\U_{+,k_0}^{(\ga)}-zI\big)G^{(\ga)}_+(z,\cdot,k',k_0)\big)(k'-1)\\
\big(\big(\U_{+,k_0}^{(\ga)}-zI\big)G^{(\ga)}_+(z,\cdot,k',k_0)\big)(k')
\end{pmatrix}\no
\\
&\hspace{7pt}=
\begin{pmatrix}
\big(\big(\V_{+,k_0}^{(\ga)}\W_{+,k_0}^{(\ga)}-zI\big)G^{(\ga)}_+(z,\cdot,k',k_0)\big)(k'-1)\\
\big(\big(\V_{+,k_0}^{(\ga)}\W_{+,k_0}^{(\ga)}-zI\big)G^{(\ga)}_+(z,\cdot,k',k_0)\big)(k')
\end{pmatrix}\no
\\
&\hspace{7pt}=\Te(k')
\begin{pmatrix}
-zR^{(\ga)}_+(z,k'-1,k_0)\hatt U^{(\ga)}_+(1/\bar z,k'-1,k_0)^*\\
z
\hatt V^{(\ga)}_+( z,k',k_0)P^{(\ga)}_+(1/\bar z,k',k_0)^*
\end{pmatrix}
-z\begin{pmatrix}
G^{(\ga)}_+(z,k'-1,k',k_0))\\G^{(\ga)}_+(z,k',k',k_0))
\end{pmatrix}\no
\\
&\hspace{7pt}=\Te(k')
\begin{pmatrix}
-zR^{(\ga)}_+(z,k'-1,k_0)\hatt U^{(\ga)}_+(1/\bar z,k'-1,k_0)^*\\
-zR^{(\ga)}_+(z,k',k_0)\hatt U^{(\ga)}_+(1/\bar z,k',k_0)^*
\end{pmatrix}
-z\begin{pmatrix}
G^{(\ga)}_+(z,k'-1,k',k_0))\\G^{(\ga)}_+(z,k',k',k_0))
\end{pmatrix}\no
\\
&\hspace{7pt}=
-z\begin{pmatrix}
0\\
P^{(\ga)}_+(z,k,k_0)\hatt U^{(\ga)}_+( 1/\bar z,k,k_0)^*+\hatt U^{(\ga)}_+( z,k,k_0)P^{(\ga)}_+(1/\bar z,k,k_0)^*
\end{pmatrix}\no
\\
&\hspace{7pt}=
\begin{pmatrix}
0\\
2z(-1)^{k'+2}I_m
\end{pmatrix}
=\begin{pmatrix}0\\2zI_m \end{pmatrix}.\lb{3.104}
\end{align}
Hence, when $k'\in[k_0,\infty)\cap\bbZ$ is even, \eqref{3.101} is a consequence of \eqref{3.102} and \eqref{3.104}.

The proof of \eqref{3.96} is omitted here for brevity, but follows a line of reasoning similar that just completed for the proof of \eqref{3.95} by using \eqref{3.72}, \eqref{3.73}, \eqref{3.98}, and \eqref{3.99}.
\end{proof}

Before stating our final result for the full-lattice resolvent of $\U$, let us recall the definition of, and some facts about, the matrix-valued Wronskian, defined in \cite{CGZ07}, for two $\Cm$-valued sequences $U_j(z,\cdot)$, $j=1,2$. First, the Wronskian is defined for $k\in\bbZ, \; z\in\Cz,$ by
\begin{align}\lb{3.105}
&W(U_1(1/\ol{z},k),U_2(z,k))\no\\
&\hspace{10pt}= \frac{(-1)^{k+1}}{2} \big[
U_1(1/\ol{z},k)^*U_2(z,k)-(\V^*U_1(1/\ol{z},\dott))(k)^*
(\V^*U_2(z,\dott))(k)\big],
\end{align}
where $\V$ is defined in \eqref{3.15}. It is shown in \cite[Lemma 3.1]{CGZ07} when $\U U_j(z,\cdot)=zU_j(z,\cdot)$, and hence $\V^* U_j(z,\cdot)= V_j(z,\cdot)$, $j=1,2$, where $\U$ is viewed as a difference expression rather than as an operator acting on $\ell^2(\bbZ)^{m\times m}$, that the Wronskian of $U_j(z,\cdot)$, $j=1,2$, is independent of $k\in\Z$. Moreover, for $P_+^{(\ga)}(z,\cdot,k_0)$ and $Q_+^{(\ga)}(z,\cdot,k_0)$ defined in \eqref{3.56}, and for $U_\pm^{(\ga)}(z,\cdot,k_0)$ defined in \eqref{3.74}, with $k,k_0\in\bbZ, \; z\in\Cz$, as a consequence of \eqref{3.56}, \eqref{3.57}, \eqref{3.74}, \eqref{3.75}, and property \eqref{A.7}, we see that
\begin{align}
W\big(P_+^{(\ga)}(1/\ol{z},k,k_0),Q_+^{(\ga)}(z,k,k_0)\big) &= I_m,
\\
W\big(U_+^{(\ga)}(1/\ol{z},k,k_0),U_-^{(\ga)}(z,k,k_0)\big) &= M_-^{(\ga)}(z,k_0)-M_+^{(\ga)}(z,k_0).\lb{3.107}
\end{align}

For notational simplicity, we abbreviate the Wronskian of $U_+^{(\ga)}$ and
$U_-^{(\ga)}$ by
\begin{equation}
W^{(\ga)}(z,k_0)= - W\big(U_+^{(\ga)}(1/\ol{z},k,k_0),U_-^{(\ga)}(z,k,k_0)\big).
\end{equation}
Then, using \eqref{3.77}, \eqref{3.81}, and \eqref{3.107},  one analytically continues $W^{(\ga)}(z,k_0)$ to $z=0$ and obtains
\begin{equation}\lb{3.109}
W^{(\ga)}(z,k_0) = M^{(\ga)}_+(z,k_0)-M^{(\ga)}_-(z,k_0), \quad k\in\bbZ, \; z\in\bbC.
\end{equation}
Moreover, one verifies the following symmetry property of the Wronskian $W^{(\ga)}(z,k_0)$, for $k\in\Z, \; z\in\C$
\begin{equation}
M^{(\ga)}_+(z,k_0)W^{(\ga)}(z,k_0)^{-1}M^{(\ga)}_-(z,k_0) =
M^{(\ga)}_-(z,k_0)W^{(\ga)}(z,k_0)^{-1}M^{(\ga)}_+(z,k_0).
\end{equation}
Then, using \eqref{3.74}, \eqref{3.75}, \eqref{3.89}, \eqref{3.92}, \eqref{3.107}, \eqref{3.109}, and following the steps in the proof for \cite[Lemma 3.2]{CGZ07} for the special case when $\ga=I_m$, we find that
\begin{align}
&U^{(\ga)}_+(z,k,k_0){W^{(\ga)}(z,k_0)}^{-1}U^{(\ga)}_-(1/\ol{z},k,k_0)^*\no\\
&\quad - U^{(\ga)}_-(z,k,k_0)W^{(\ga)}(z,k_0)^{-1}U^{(\ga)}_+(1/\ol{z},k,k_0)^* = 2(-1)^{k+1}I_m,
\lb{3.111}
\\
&V^{(\ga)}_+(z,k,k_0)W^{(\ga)}(z,k_0)^{-1}U^{(\ga)}_-(1/\ol{z},k,k_0)^*\no\\
&\quad -
V^{(\ga)}_-(z,k,k_0)W^{(\ga)}(z,k_0)^{-1}U^{(\ga)}_+(1/\ol{z},k,k_0)^* = 0. \lb{3.112}
\end{align}
Then, using \eqref{3.111} and \eqref{3.112}, and following the steps in the proof for \cite[Lemma 3.3]{CGZ07} for the special case when $\ga=I_m$, we obtain the next result for the resolvent of the full-lattice operator $\U$.

\begin{lemma} \lb{l3.14}
Let $z\in\bbC\backslash(\dD\cup\{0\})$, fix $k_0\in\bbZ$, and let $\ga\in\Cm$ be unitary. Then the
resolvent $(\U-zI)^{-1}$ of the unitary CMV operator $\U$  on
$\ltm{\bbZ}$ is given, for $k,k' \in\Z$, in terms of its matrix representation in the
standard basis of $\ltm{\bbZ}$ by
\begin{align}
(\U-zI)^{-1}(k,k') = \frac{1}{2z}
\begin{cases}
U^{(\ga)}_-(z,k,k_0)W^{(\ga)}(z,k_0)^{-1}U^{(\ga)}_+(1/\ol{z},k',k_0)^*,\\
\hspace*{4.1cm} k < k' \text{ or } k = k' \text{ odd},
\\
U^{(\ga)}_+(z,k,k_0)W^{(\ga)}(z,k_0)^{-1}U^{(\ga)}_-(1/\ol{z},k',k_0)^*,\\
\hspace*{3.92cm} k > k' \text{ or } k = k' \text{ even},
\end{cases} \lb{3.113}
\end{align}
Moreover, since $0\in\bbC\backslash\sigma(\U)$, \eqref{3.113}
analytically extends to $z=0$.
\end{lemma}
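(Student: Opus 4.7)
The plan is to set $G^{(\ga)}(z,k,k',k_0)$ to be the right-hand side of \eqref{3.113} (including the prefactor $1/(2z)$) and verify that, for each fixed $k'\in\Z$, the sequence $k\mapsto G^{(\ga)}(z,k,k',k_0)$ lies in $\ltmm{\Z}$ and satisfies
\begin{equation*}
\bigl((\U-zI)\,G^{(\ga)}(z,\cdot,k',k_0)\bigr)(k) \,=\, \De_{k'}(k), \quad k\in\Z.
\end{equation*}
Square summability is immediate from \eqref{3.74}: on the branch $k>k'$ only $U_+^{(\ga)}(z,\cdot,k_0)$ (square summable near $+\infty$) enters, while on $k<k'$ only $U_-^{(\ga)}(z,\cdot,k_0)$ (square summable near $-\infty$) enters, and $W^{(\ga)}(z,k_0)=M_+^{(\ga)}(z,k_0)-M_-^{(\ga)}(z,k_0)$ is invertible on $\bbC\backslash(\dD\cup\{0\})$ because it is the difference of a Caratheodory and an anti-Caratheodory $m\times m$ matrix function and therefore has strictly positive real part (cf.\ Appendix~\ref{sA}).

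For the resolvent identity, I would use the factorization $\U=\V\W$ together with the difference-expression relations $\U U_\pm^{(\ga)}=z\,U_\pm^{(\ga)}$, $\W U_\pm^{(\ga)}=z\,V_\pm^{(\ga)}$, $\V V_\pm^{(\ga)}=U_\pm^{(\ga)}$ coming from Lemma~\ref{l3.3} and \eqref{3.74}--\eqref{3.75}. Because $\U$ is five-diagonal, for every index $k$ whose length-$5$ stencil $\{k+j:|j|\le 2\}$ lies entirely within a single branch of the case distinction in \eqref{3.113}, the eigenvalue property forces $\bigl((\U-zI)\,G^{(\ga)}(z,\cdot,k',k_0)\bigr)(k)=0$. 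The only rows that can contribute nontrivially are those at which the $2\times 2$ block of $\V$ or $\W$ coupling two consecutive indices straddles the jump of $G^{(\ga)}$ at $k'$, namely $k\in\{k'-1,k',k'+1\}$.

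The main obstacle is to verify that at these boundary rows the contributions reduce to exactly $\De_{k'}(k)$. Splitting into the two cases according to the parity of $k'$, I would follow the scheme used in the half-lattice proof of Lemma~\ref{l3.13} (and in \cite[Lemma~3.3]{CGZ07} for the special case $\ga=I_m$): apply $\W$ to $G^{(\ga)}(z,\cdot,k',k_0)$ first, collapsing each pair of entries inside the relevant $\Te_{k'}$ or $\Te_{k'+1}$ block via $\W U_\pm^{(\ga)}=zV_\pm^{(\ga)}$; then apply $\V$ using $\V V_\pm^{(\ga)}=U_\pm^{(\ga)}$; and finally subtract $zG^{(\ga)}$. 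The two surviving boundary expressions are precisely the left-hand sides of \eqref{3.111} and \eqref{3.112} evaluated at $k=k'$, and by those identities they equal $2(-1)^{k'+1}I_m$ and $0$, respectively. Combining with the prefactor $1/(2z)$ and the parity-dependent sign coming from the case distinction defining $G^{(\ga)}$, one obtains $I_m$ at $k=k'$ and $0$ at $k=k'\pm 1$, as required. Analyticity at $z=0$ then follows because $\U$ is unitary, so $0\in\res{\U}$.
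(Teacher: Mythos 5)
Your proposal is correct and takes essentially the same route as the paper: the paper establishes \eqref{3.113} by deriving the identities \eqref{3.111} and \eqref{3.112} and then ``following the steps in the proof for \cite[Lemma 3.3]{CGZ07}'', which is precisely the boundary-row computation you outline (and which is carried out explicitly in the half-lattice setting in the proof of Lemma~\ref{l3.13}). The only ingredient the paper leaves implicit that you make explicit is the invertibility of $W^{(\ga)}(z,k_0)=M_+^{(\ga)}(z,k_0)-M_-^{(\ga)}(z,k_0)$, which indeed follows from the Caratheodory/anti-Caratheodory character of $M_\pm^{(\ga)}$.
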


\appendix
\section{Basic Facts on Caratheodory and Schur Functions}
\lb{sA}
\renewcommand{\theequation}{A.\arabic{equation}}
\renewcommand{\thetheorem}{A.\arabic{theorem}}
\setcounter{theorem}{0} \setcounter{equation}{0}

In this appendix we summarize a few basic facts on matrix-valued
Caratheodory and Schur functions used throughout this manuscript.
(For the analogous case of matrix-valued Herglotz functions we refer
to \cite{GT00} and the extensive list of references therein.)

We denote by $\D$ and $\dD$ the open unit disk and the
counterclockwise oriented unit circle in the complex plane $\C$,
\begin{equation}
\D = \{ z\in\C \st \abs{z} < 1 \}, \quad \dD = \{ \ze\in\C \st
\abs{\ze} = 1 \}.
\end{equation}
Moreover, we denote as usual $\Re(A)=(A+A^*)/2$ and
$\Im(A)=(A-A^*)/(2i)$ for square matrices $A$ with complex-valued
entries.

\begin{definition} \lb{dA.1}
Let $m\in\bbN$ and $F_\pm$, $\Phi_+$, and $\Phi_-^{-1}$ be $m\times
m$
matrix-valued analytic functions in $\D$. \\
$(i)$ $F_+$ is called a {\it Caratheodory matrix} if $\Re(F_+(z))\geq
0$ for all $z\in\D$ and $F_-$ is called an {\it anti-Caratheodory
matrix} if $-F_-$ is a
Caratheodory matrix. \\
$(ii)$ $\Phi_+$ is called a {\it Schur matrix} if
$\|\Phi_+(z)\|_{\Cm} \leq 1$, for all $z\in\D$.\  $\Phi_-$ is called
an {\it anti-Schur matrix} if $\Phi_-^{-1}$ is a Schur matrix.
\end{definition}

\begin{theorem} \lb{tA.2}
Let $F$ be an $m\times m$ Caratheodory matrix, $m\in\bbN$. Then $F$
admits the Herglotz representation
\begin{align}
& F(z)=iC+ \oint_{\dD} d\Omega(\zeta) \, \f{\zeta+z}{\zeta-z}, \quad
z\in\D, \lb{A.3}
\\
& C=\Im(F(0)), \quad \oint_{\dD} d\Omega(\zeta) = \Re(F(0)),
\end{align}
where $d\Omega$ denotes a nonnegative $m \times m$ matrix-valued
measure on $\dD$. The measure $d\Omega$ can be reconstructed from $F$
by the formula
\begin{equation}
\Omega\big(\Arc\big(\big(e^{i\te_1},e^{i\te_2}\big]\big)\big)
=\lim_{\delta\downarrow 0} \lim_{r\uparrow 1} \f{1}{2\pi}
\oint_{\te_1+\delta}^{\te_2+\delta} d\te \,
\Re\big(F\big(r\zeta\big)\big), \lb{A.4}
\end{equation}
where
\begin{equation}
\Arc\big(\big(e^{i\theta_1},e^{i\theta_2}\big]\big)
=\big\{\zeta\in\dD\,|\, \theta_1<\te\leq \theta_2\big\}, \quad
\theta_1 \in [0,2\pi), \; \theta_1<\theta_2\leq \theta_1+2\pi.
\lb{A.5}
\end{equation}
Conversely, the right-hand side of equation \eqref{A.3} with $C =
C^*$ and $d\Omega$ a finite nonnegative $m \times m$ matrix-valued
measure on $\dD$ defines a Caratheodory matrix.
\end{theorem}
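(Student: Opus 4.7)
The plan is to reduce the matrix-valued statement to the classical scalar Riesz--Herglotz representation on the disk by testing against vectors $v\in\C^{m}$ and then reassembling the resulting scalar measures into a matrix-valued measure by polarization. First, for any $v\in\C^{m}$, the scalar function $f_{v}(z):=v^{*}F(z)v$ is analytic in $\D$ and satisfies $\Re f_{v}(z)=v^{*}\Re(F(z))v\ge 0$, so $f_{v}$ is a scalar Caratheodory function. The classical scalar Riesz--Herglotz theorem then supplies a unique finite nonnegative Borel measure $\mu_{v}$ on $\dD$ with
\[
f_{v}(z)=i\,\Im f_{v}(0)+\oint_{\dD}\frac{\zeta+z}{\zeta-z}\,d\mu_{v}(\zeta),\qquad z\in\D,
\]
together with the scalar inversion formula, i.e., the specialization of \eqref{A.4} to $m=1$.

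Next, I assemble the scalar measures into a matrix-valued measure by polarization. For each Borel set $E\subseteq\dD$, define the sesquilinear form $s_{E}(u,v):=\tfrac{1}{4}\sum_{k=0}^{3}i^{-k}\mu_{u+i^{k}v}(E)$, which is well-defined because $s_{E}(v,v)=\mu_{v}(E)\ge 0$, so $s_{E}$ corresponds to a unique matrix $\Omega(E)\in\C^{m\times m}$ with $\Omega(E)\ge 0$ and $v^{*}\Omega(E)v=\mu_{v}(E)$. Countable additivity of $\Omega(\cdot)$ in the positive semidefinite cone follows entrywise from countable additivity of each scalar $\mu_{v}$, so $d\Omega$ is a finite nonnegative $m\times m$ matrix-valued measure on $\dD$. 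Testing \eqref{A.3} against $u,v\in\C^{m}$ then reduces, via the polarization identity, to the scalar identity already established, while the normalization $C=\Im F(0)$ and $\oint_{\dD} d\Omega=\Re F(0)$ drops out of \eqref{A.3} by setting $z=0$ and splitting into real and imaginary parts. The inversion formula \eqref{A.4} is obtained by applying the scalar Stieltjes inversion formula to $v^{*}F(\cdot)v$, using $v^{*}\,d\Omega\,v=d\mu_{v}$, and polarizing to recover each matrix entry of $\Omega$.

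For the converse, given $C=C^{*}$ and a finite nonnegative matrix-valued measure $d\Omega$, the function $F(z):=iC+\oint_{\dD}\frac{\zeta+z}{\zeta-z}\,d\Omega(\zeta)$ is analytic in $\D$ by differentiation under the integral sign, and positivity of its real part follows from
\[
\Re\frac{\zeta+z}{\zeta-z}=\frac{1-|z|^{2}}{|\zeta-z|^{2}}>0,\qquad \zeta\in\dD,\ z\in\D,
\]
combined with $d\Omega\ge 0$, yielding $v^{*}\Re F(z)\,v=\oint_{\dD}\frac{1-|z|^{2}}{|\zeta-z|^{2}}\,d(v^{*}\Omega v)(\zeta)\ge 0$ for every $v\in\C^{m}$. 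The main technical obstacle is verifying that the polarization procedure genuinely produces a matrix-\emph{valued} measure rather than merely a bounded set function: countable additivity of $\Omega(\cdot)$ with values in the cone of positive semidefinite matrices must be checked carefully, and positivity of $\Omega(E)$ has to be extracted from the identity $s_{E}(v,v)=\mu_{v}(E)\ge 0$ rather than from any a priori positivity of the polarized expression. Once this point is settled, everything else is either direct manipulation of Poisson-type integrals or a transcription of the scalar Riesz--Herglotz theory.
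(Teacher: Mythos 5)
The paper does not actually prove Theorem \ref{tA.2}: Appendix \ref{sA} is explicitly a summary of known facts about matrix-valued Caratheodory functions, quoted from the literature (cf.\ the reference to \cite{GT00} and the sources cited there), so there is no in-paper argument to compare yours against. Your reduction to the scalar Riesz--Herglotz theorem by testing against vectors and polarizing is the standard self-contained route, and it is essentially correct: the converse direction, the normalization at $z=0$, and the inversion formula \eqref{A.4} all go through exactly as you describe. The one place where your justification is aimed at the wrong target is the construction of $\Omega(E)$ from the family $\{\mu_{v}\}$. The identity $s_{E}(v,v)=\mu_{v}(E)\ge 0$ does not by itself make $s_{E}$ a sesquilinear form; what you actually need is that $v\mapsto\mu_{v}(E)$ is a quadratic form, i.e.\ satisfies $\mu_{\lambda v}=\abs{\lambda}^{2}\mu_{v}$ and the parallelogram law $\mu_{u+v}+\mu_{u-v}=2\mu_{u}+2\mu_{v}$. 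These follow from the corresponding identities for $f_{v}=v^{*}Fv$ (which hold pointwise in $z$ by bilinearity of $(u,v)\mapsto u^{*}F(z)v$) \emph{together with the uniqueness of the scalar representing measure}, which transfers the identities from the functions $f_{v}$ to the measures $\mu_{v}$. Once that is in place, $s_{E}$ is sesquilinear, $\Omega(E)$ exists, and positivity and countable additivity are immediate as you say; so the "main technical obstacle" you flag is not countable additivity but this uniqueness-based sesquilinearity step. With that point made explicit, the proof is complete and is a perfectly good substitute for the citation the paper relies on.
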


We note that additive nonnegative $m\times m$ matrices on the
right-hand side of \eqref{A.3} can be absorbed into the measure
$d\Om$ since
\begin{equation}
\oint_\dD d\mu_0(\zeta) \, \f{\zeta+z}{\zeta-z}=1, \quad z\in\D,
\lb{A.5a}
\end{equation}
where
\begin{equation}
d\mu_0(\zeta)=\f{d\te}{2\pi}, \quad \zeta=e^{i\te}, \; \te\in
[0,2\pi) \lb{A.5b}
\end{equation}
denotes the normalized Lebesgue measure on the unit circle $\dD$.

Given a Caratheodory (resp., anti-Caratheodory) matrix $F_+$ (resp.
$F_-$) defined in $\D$ as in \eqref{A.3}, one extends $F_\pm$ to all
of $\bbC\backslash\dD$ by
\begin{equation}
F_\pm(z)=iC_\pm \pm \oint_{\dD} d\Om_\pm (\zeta) \,
\f{\zeta+z}{\zeta-z}, \quad z\in\bbC\backslash\dD, \;\;
C_\pm=C_\pm^*. \lb{A.6}
\end{equation}
In particular,
\begin{equation}
F_\pm(z) = -F_\pm(1/\ol{z})^*, \quad z\in\C\backslash\ol{\D}.
\lb{A.7}
\end{equation}
Of course, this continuation of $F_\pm|_{\D}$ to
$\bbC\backslash\ol\D$, in general, is not an analytic continuation of
$F_\pm|_\D$.

Next, given the functions $F_\pm$ defined in $\bbC\backslash\dD$ as
in \eqref{A.6}, we introduce the functions $\Phi_\pm$ by
\begin{equation}
\Phi_\pm(z)=[F_\pm(z)-I_m][F_\pm(z)+I_m]^{-1}, \quad
z\in\bbC\backslash\dD.  \lb{A.11}
\end{equation}
We recall (cf., e.g., \cite[p.\ 167]{SF70}) that if $\pm \Re(F_\pm)
\geq 0$, then $[F_\pm \pm I_m]$ is invertible. In particular,
$\Phi_+|_{\D}$ and $[\Phi_-]^{-1}|_{\D}$ are Schur matrices (resp.,
$\Phi_-|_{\D}$ is an anti-Schur matrix). Moreover,
\begin{equation}
F_\pm(z)= [I_m-\Phi_\pm (z)]^{-1} [I_m+\Phi_\pm (z)], \quad
z\in\bbC\backslash\dD.    \lb{A.12}
\end{equation}

\noindent {\bf Acknowledgments.}
Fritz Gesztesy would like to thank all organizers of the 14th International Conference on Difference Equations and Applications (ICDEA 2008), for their kind invitation and the stimulating atmosphere created during the meeting. In addition, he is particularly indebted to Mehmet \"Unal for the extraordinary hospitality extended to him during his ten day stay in Istanbul in July of 2008.



\end{document}